\newtheorem{theorem}{Theorem}[section]
\newtheorem{lemma}{Lemma}[section]
\newtheorem{corollary}{Corollary}[section]
\theoremstyle{definition}
\newtheorem{example}{Example}[section]
\theoremstyle{definition}
\newtheorem{remark}{Remark}[section]
\newtheorem{definition}{Definition}[section]
\numberwithin{equation}{section}
\begin{document}

\title[Bound sets for a class of $\phi$-Laplacian operators]{Bound sets for a class of $\phi$-Laplacian operators}

\author[G.~Feltrin]{Guglielmo Feltrin}

\address{
Department of Mathematics, Computer Science and Physics, University of Udine\\
Via delle Scienze 206, 33100 Udine, Italy}

\email{guglielmo.feltrin@uniud.it}

\author[F.~Zanolin]{Fabio Zanolin}

\address{
Department of Mathematics, Computer Science and Physics, University of Udine\\
Via delle Scienze 206, 33100 Udine, Italy}

\email{fabio.zanolin@uniud.it}

\thanks{Work performed under the auspices of the Grup\-po Na\-zio\-na\-le per l'Anali\-si Ma\-te\-ma\-ti\-ca, la Pro\-ba\-bi\-li\-t\`{a} e le lo\-ro Appli\-ca\-zio\-ni (GNAMPA) of the Isti\-tu\-to Na\-zio\-na\-le di Al\-ta Ma\-te\-ma\-ti\-ca (INdAM) and supported by the project PRID \textit{SiDiA -- Sistemi Dinamici e Applicazioni} of the DMIF, University of Udine. The first author is supported by INdAM--GNAMPA project ``Problemi ai limiti per l'equazione della curvatura media prescritta''. This work was preliminarily announced on the occasion of the workshop ``Recent Advances on Dynamical Equations'' in honor of Professor Luisa Malaguti, held in Ancona, October 24--25, 2019.
\\
\textbf{Preprint -- December 2020}}

\subjclass{34B15, 34C25, 47H11.}

\keywords{Periodic solutions, continuation theorems, $\phi$-Laplacian operators, bound sets, Nagumo--Hartman condition, Li\'{e}nard and Rayleigh systems.}

\date{}

\dedicatory{}

\begin{abstract}
We provide an extension of the Hartman--Knobloch theorem for periodic solutions of vector differential systems to a general class of $\phi$-Laplacian differential operators.
Our main tool is a variant of the Man\'{a}sevich--Mawhin continuation theorem developed for this class of operator equations, together with the theory of bound sets. Our results concern the case of convex bound sets for which we show some new connections using a characterisation of sublevel sets due to Krantz and Parks.
We also extend to the $\phi$-Laplacian vector case a classical theorem of Reissig for scalar periodically perturbed Li\'{e}nard equations.
\end{abstract}

\maketitle

\section{Introduction}\label{section-1}

In the present paper, we study the existence of periodic solutions to differential systems involving a $\phi$-Laplacian differential operator, with the aim of extending the classical theorem of Hartman--Knobloch for the differential system
\begin{equation}\label{eq-x''-intro}
x'' = f(t,x,x').
\end{equation}
In his original work \cite{Ha-60} (see also \cite[ch.~XII]{Ha-book-64}), Hartman considered the two-point boundary value problem (BVP) associated with \eqref{eq-x''-intro}, assuming a (at most quadratic) growth condition of $\|f(t,x,y)\|$ in $y$ (the so-called ``Bernstein--Nagumo--Hartman condition'', cf.~\cite{Fa-85,Ma-81}) and the hypothesis that the vector field is ``repulsive'' according to
\begin{equation}\label{hy-rep}
\begin{aligned}
\langle f(t,x,y), x \rangle + \|y\|^{2} \geq 0, \quad &\text{for every $x\in\mathbb{R}^{n}$ with $\|x\|=R$} 
\\&\text{and $y\in\mathbb{R}^{n}$ such that $\langle x,y\rangle =0$.}
\end{aligned}
\end{equation}
In the special case of a Newtonian force without friction $f=f(t,x)$, condition \eqref{hy-rep} reads as
\begin{equation}\label{hy-rep-0}
\langle f(t,x), x \rangle \geq 0, \quad \text{for every $x\in\mathbb{R}^{n}$ with $\|x\|=R$.}
\end{equation}
Under the above assumptions, one find that the BVP has at least one solution in the closed ball $B[0,R]=\{x\in\mathbb{R}^{n}\colon \|x\|\leq R\}$ with center $0$ and radius $R$ (or in the open one $B(0,R)$, if the inequalities in \eqref{hy-rep} or \eqref{hy-rep-0} are strict). The result was then obtained in the context of the periodic BVP by Knobloch in \cite{Kn-71}, under the same geometric conditions on the vector field.

In the Seventies, from these results a line of research initiated and received a great interest, as shown by a series of works \cite{Be-74, BeSc-73, Ma-74, Sc-72}. In particular, assumptions \eqref{hy-rep} and \eqref{hy-rep-0}, referring to the boundary of a ball, were extended to a broader families of domains.
With this respect, very general conditions were proposed by Mawhin \cite{Ma-74} and Bebernes \cite{Be-74}, by introducing the concept of ``bounding function'', namely a Lyapunov-like function $V\colon\mathbb{R}^{n}\to\mathbb{R}$ of class $\mathcal{C}^{2}$ which plays the same role as $\|x\|^{2}/2$, so that \eqref{hy-rep} can be written as
\begin{equation}\label{hy-rep-V}
\begin{aligned}
\langle f(t,x,y), V'(x) \rangle + \langle V''(x) y, y \rangle \geq 0, 
\quad &\text{for every $x\in\mathbb{R}^{n}$ with $V(x)=c$}
\\&\text{and $y\in\mathbb{R}^{n}$ such that $\langle V'(x),y\rangle =0$,}
\end{aligned}
\end{equation}
where we have denoted with $V'$ the gradient of $V$ and by $V''$ its Hessian matrix.
In this case, the open ball $B(0,R)$ is replaced by the sublevel set $[V< c] := \{x\in\mathbb{R}^{n}\colon V(x)< c\}$.

Assumptions of the form \eqref{hy-rep} or \eqref{hy-rep-V} (typically with the strict inequality) imply that there are no solutions ``tangent from the interior'' to the boundary of a given open bounded set $G\subseteq\mathbb{R}^{n}$ (i.e., $G$ an open ball or a sublevel set in the above examples), namely the vector field satisfies Wa\.{z}ewski-type conditions at the boundary (see \cite{KaLaYo-74}, as well as \cite{Sr-04} and the references therein). 
This in turn lead to the concept of \textit{bound set} introduced in 
\cite{GaMa-77}, which has found many relevant applications to first and second order differential systems.
Extensions of these methods have been obtained by several authors in various directions such as Floquet BVPs, Carath\'{e}odory conditions, non-smooth bounding functions and non-smooth boundaries, differential inclusions, extensions to Banach spaces, PDEs, integro-differential equations, differential equations with impulses, nonlocal BVPs. A constantly growing literature in this area shows the persistent influence of the bound sets and bounding functions approach; see \cite{AnKoMa-09,AnMaPa-09,BLMT-17,BMT-19,MaSD-17,MaSD-19,MaSD-19rimut,PaTa-19} and the references therein.

The aim of our work is to propose further applications of the bound set technique to the study of the periodic BVP associated with the second-order $\phi$-Laplacian differential operator
\begin{equation}\label{eq-phi-intro}
( \phi(x') )' = f(t,x,x'),
\end{equation}
where $\phi \colon \mathbb{R}^{n} \to \phi(\mathbb{R}^{n})=\mathbb{R}^{n}$
is a homeomorphism such that $\phi(0)=0$, and $f \colon \mathbb{R}\times \mathbb{R}^{n}\times \mathbb{R}^{n}\to \mathbb{R}^{n}$
is a continuous vector field which is $T$-periodic in the $t$-variable. 
The study of nonlinear scalar differential equations with a $p$-Laplacian operator, or more generally of the form
\begin{equation*}
\mathrm{div} \mathcal{A}(\xi ,u,u_{\xi}) = \mathcal{B}(\xi,u,u_{\xi}),
\end{equation*}
for $\xi$ in a domain of $\mathbb{R}^{N}$, is a classical topic in the area of PDEs, see~\cite{Se-64} (cf.~also \cite{PuSe-07} and the references therein).
In the setting of ODEs differential systems, a systematic investigation of the periodic problem was initiated by Man\'{a}sevich and Mawhin in \cite{MaMa-98}. It is noteworthy that the analysis of systems has an independent interest also from the point of view of the analysis of differential equations in the complex plane or in the complex space $\mathbb{C}^{n}$ (cf.~\cite{Ma-12}).

Our contribution pursues a line of research started by Mawhin \cite{Ma-00} and by Mawhin and Ure\~{n}a \cite{MaUr-02}, dealing with the $p$-Laplacian operator $\phi(s)=\phi_{p}(s):=|s|^{p-2}s$ with $p>1$, with the aim of further extending Hartman--Knobloch theorem to a broader class of nonlinear differential operators.
A partial result in this direction was obtained in \cite[Theorem~5.2]{FeZa-17} when the domain is a ball. Here, we extend also this result to more general domains.

\medskip

The plan of the paper is as follows. In Section~\ref{section-2}, we introduce the main theoretical tools for the solvability of the periodic BVP associated with \eqref{eq-phi-intro}, which are based on the Man\'{a}sevich--Mawhin continuation theorems \cite{MaMa-98} in the versions stated in \cite{FeZa-17} for a general homeomorphism $\phi$.
This leads to Theorem~\ref{th-bs} and Theorem~\ref{th-bs-bis}, after the introduction of the concept of bound sets and Nagumo--Hartman condition for equation \eqref{eq-phi-intro}. As a next step, following \cite{GaMa-77,Ma-74,Ma-stud-77,Ma-79}, we introduce a family of bounding functions and obtain the corresponding Theorem~\ref{th-bs2} and Theorem~\ref{th-bs3}.
Section~\ref{section-3} is devoted to the application of these latter results. In particular, for a set $G:=[V<0]$ and the equation
\begin{equation*}
( \phi(x') )' = f(t,x),
\end{equation*}
we find the existence of a $T$-periodic solution with values in $\overline{G}$, provided that 
\begin{itemize}[leftmargin=28pt,labelsep=10pt]
\item[$(i)$] $V'(x)\neq 0$, for every $x\in[V=0]$;
\item[$(ii)$] $\langle V''(x)y,y\rangle \geq 0$, for every $x\in \partial G$ and $y\in \mathbb{R}^{n}$ with $\langle V'(x),y\rangle = 0$;
\item[$(iii)$] $\langle V'(x),f(t,x)\rangle \geq 0$, for every $t\in \mathopen{[}0,T\mathclose{]}$ and $x\in \partial G$.
\end{itemize}
The class of $\phi$-Laplacians for which the above result holds require that $\phi(\xi)=A(\xi)\xi$, $\xi\in\mathbb{R}^{n}$, with $A$ a positive scalar function. As shown in \cite{FeZa-17}, maps of this form are not necessarily monotone and include all the $p$-Laplacian vector differential operators, thus extending Hartman--Knobloch theorem for frictionless vector fields along the line of \cite{Ma-00}.
The case of a vector field depending on $x'$ is studied in Section~\ref{section-3.2} extending \cite[Corollary~6.3]{Ma-74} for vector Rayleigh systems to the $\phi$-Laplacians. Another result for vector $\phi$-Laplacian Li\'{e}nard systems
is given in Section~\ref{section-3.3} extending \cite[Theorem~3.3]{PeXu-07} as well as a classical theorem of Reissig \cite[Theorem~3]{Re-75}.

We observe that conditions $(i)$ and $(ii)$ appear explicitly or implicitly in some classical results for equation \eqref{eq-x''-intro}, see for instance \cite{Be-74,HaSc-83,Ma-74}. Indeed, consider condition \eqref{hy-rep-V} already assumed in the above quoted papers in the special case when $f=f(t,x)$. From 
\begin{equation}\label{eq-M-intro}
\langle V''(x)y,y\rangle \geq - \langle V'(x),f(t,x)\rangle\geq -M, 
\end{equation}
(with $M:=\sup\{\langle V'(x),f(t,x)\rangle \colon t\in\mathopen{[}0,T\mathclose{]}, x\in \overline{G}\}$), we obtain that the quadratic form $y\mapsto \langle V''(x)y,y\rangle$ is bounded from below and hence positive semi-definite, so that $(ii)$ holds. Notice also that $(iii)$ follows reading \eqref{hy-rep-V} for $y=0$.
Conditions $(i)$ and $(ii)$ hint some kind of convexity properties for the sublevel sets of $V$, however, at the best of our knowledge, there are no explicit proofs relating these conditions with the convexity of the set $[V\leq0]$.
In general, the convexity of the sublevel set is not guaranteed by the sole conditions $(i)$ and $(ii)$, as it could be shown by an example of a sublevel set made by the union of two disjoint balls. On the other hand, if we assume that $[V\leq0]$ is connected, then the convexity is achieved thanks to an analysis due to Krantz and Parks in \cite{KrPa-99} (see also \cite{Kr-1011}). For the reader's convenience, we propose a different proof of this result in Appendix~\ref{appendix-A}.
We exploit this characterization of the convexity (see Lemma~\ref{lem-convex}) in the proof of Corollary~\ref{cor-V}.
Clearly, for a general $f$ depending on $y=x'$, or if we have some information on the a priori bounds on $x'$ (say $\|x'\|_{\infty}\leq K$), we are not allowed to conclude with the positive semi-definiteness of $V''(x)$ from \eqref{eq-M-intro} because only the $y$ with $\|y\|\leq K$ will be involved in \eqref{eq-M-intro}. An investigation of non-convex bound sets was proposed in \cite{AmHa-13} (see also \cite{Za-87ud} for some remarks about this problem).

Throughout the paper, we denote by $\|\cdot\|$ the Euclidean norm in $\mathbb{R}^{n}$ and by $\langle\cdot,\cdot\rangle$ the associated standard inner product.

\section{Main results}\label{section-2}

We consider the vector differential equation
\begin{equation}\label{eq-phi}
( \phi(x') )' = f(t,x,x'),
\end{equation}
where
$\phi \colon \mathbb{R}^{n} \to \phi(\mathbb{R}^{n})=\mathbb{R}^{n}$
is a homeomorphism such that $\phi(0)=0$, and $f \colon \mathbb{R}\times \mathbb{R}^{n}\times \mathbb{R}^{n}\to \mathbb{R}^{n}$
is a continuous vector field which is $T$-periodic in the $t$-variable.

We study the problem of the existence of $T$-periodic solutions for
\eqref{eq-phi}, which, equivalently, can be reduced to the search of
solutions of \eqref{eq-phi} satisfying the boundary condition
\begin{equation}\label{eq-per}
x(0) = x(T), \quad x'(0) = x'(T).
\end{equation}
By a solution of \eqref{eq-phi} we mean a continuously differentiable function $x(\cdot)$ with $\phi(x'(\cdot))$ continuously differentiable and
satisfying \eqref{eq-phi} for all $t$.
With this respect, it is useful to introduce the space
$\mathcal{C}^{1}_{T}$ of the continuously differentiable functions $x \colon \mathopen{[}0,T\mathclose{]}\to \mathbb{R}^{n}$
satisfying the boundary condition \eqref{eq-per}, endowed with the $\mathcal{C}^{1}$-norm
\begin{equation*}
\|x\|_{\mathcal{C}^{1}}:= \|x\|_{\infty} + \|x'\|_{\infty},
\end{equation*}
where, for a continuous function $u \colon \mathopen{[}0,T\mathclose{]}\to \mathbb{R}^{n}$, we denote
\begin{equation*}
 \|u\|_{\infty} := \sup_{t\in\mathopen{[}0,T\mathclose{]}} \|u(t)\|
\end{equation*}
the classical $\sup$-norm.

Equivalently, instead of \eqref{eq-phi}, we can consider the
differential system in $\mathbb{R}^{n}\times \mathbb{R}^{n}$
\begin{equation}\label{syst-eq}
\begin{cases}
\, x' = \phi^{-1}(y) \\
\, y' = f(t,x,\phi^{-1}(y))
\end{cases}
\end{equation}
and look for a (continuously differentiable) solution $(x(\cdot),y(\cdot))$.
Every $T$-periodic solution $x(\cdot)$ of \eqref{eq-phi}
corresponds to the $T$-periodic solution $(x(\cdot),y(\cdot))$ of
\eqref{syst-eq} with $y(\cdot)= \phi(x'(\cdot))$. Note that a
solution of \eqref{eq-phi} defined on $\mathopen{[}0,T\mathclose{]}$
is bounded in the $\mathcal{C}^{1}$-norm if and only if
the corresponding solution of \eqref{syst-eq} is bounded in the uniform norm
of $\mathcal{C}(\mathopen{[}0,T\mathclose{]},\mathbb{R}^{2n})$.

For the existence of $T$-periodic solutions of \eqref{eq-phi}, we apply the
Man\'{a}sevich--Mawhin continuation theorems \cite{MaMa-98} in the versions
elaborated in \cite{FeZa-17} where weaker conditions on $\phi$ are assumed.

\begin{theorem}\label{th-cont}
Let $F = F(t,x,y;\lambda) \colon \mathopen{[}0,T\mathclose{]} \times \mathbb{R}^{n}\times \mathbb{R}^{n}\times \mathopen{[}0,1\mathclose{]}\to \mathbb{R}^{n}$ be a continuous function such that
\begin{equation*}
F(t,x,y;1) = f(t,x,y), \qquad F(t,x,y;0) = f_{0}(x,y),
\end{equation*}
where $f_{0} \colon \mathbb{R}^{n}\times \mathbb{R}^{n} \to \mathbb{R}^{n}$
is an autonomous vector field.
Let $\Omega \subseteq \mathcal{C}^{1}_{T}$ be an open and bounded set. Suppose that
\begin{itemize}[leftmargin=28pt,labelsep=10pt]
\item[$(\textsc{h}_{1})$] for each $\lambda\in \mathopen{[}0,1\mathclose{[}$ the problem
\begin{equation*}
\begin{cases}
\, (\phi(x'))' = F(t,x,x';\lambda), \\
\, x(0) = x(T), \quad x'(0) = x'(T),
\end{cases}
\eqno{(P_{\lambda})}
\end{equation*}
has no solution $x\in \partial\Omega$;
\item[$(\textsc{h}_{2})$] the condition on the Brouwer degree
\begin{equation*}
\mathrm{d}_{\mathrm{B}}(f_{0}(\cdot,0), \Omega\cap \mathbb{R}^{n},0)\neq0
\end{equation*}
holds.
\end{itemize}
Then, problem \eqref{eq-phi}-\eqref{eq-per}
has at least a solution in $\overline{\Omega}$.
\end{theorem}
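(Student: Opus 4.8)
The plan is to recast $(P_\lambda)$ as a fixed point equation for a completely continuous operator on $\mathcal{C}^1_T$, and then to argue by homotopy invariance of the Leray--Schauder degree, computing the degree at $\lambda=0$ via a finite-dimensional reduction to the Brouwer degree in $(\textsc{h}_2)$.

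First I would reformulate the problem. Let $N_\lambda x(t):=F(t,x(t),x'(t);\lambda)$ be the superposition operator, which is continuous and maps bounded subsets of $\mathcal{C}^1_T$ into bounded subsets of $\mathcal{C}(\mathopen{[}0,T\mathclose{]},\mathbb{R}^n)$, and write $\overline{h}:=\tfrac{1}{T}\int_0^T h(s)\,ds$ for the mean value. Integrating $(\phi(x'))'=N_\lambda x$ over a period and using \eqref{eq-per} shows that a solution is forced to satisfy the solvability constraint $\overline{N_\lambda x}=0$, and that $\phi(x'(t))=\phi(x'(0))+\int_0^t N_\lambda x$. The idea is to encode both the constraint and the periodicity of $x$ into the definition of the operator. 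To this end one needs a continuous map $\alpha_\lambda(x)\in\mathbb{R}^n$ (playing the role of $\phi(x'(0))$) such that
\[
\int_0^T \phi^{-1}\!\Bigl(\alpha_\lambda(x)+\int_0^s\bigl[N_\lambda x(\tau)-\overline{N_\lambda x}\bigr]\,d\tau\Bigr)\,ds = 0,
\]
and then one sets
\[
\mathcal{T}_\lambda(x)(t):= x(0)+\overline{N_\lambda x}+\int_0^t\phi^{-1}\!\Bigl(\alpha_\lambda(x)+\int_0^s\bigl[N_\lambda x(\tau)-\overline{N_\lambda x}\bigr]\,d\tau\Bigr)\,ds .
\]
By construction $\mathcal{T}_\lambda$ maps $\mathcal{C}^1_T$ into itself, and evaluating $x=\mathcal{T}_\lambda(x)$ at $t=0$ forces $\overline{N_\lambda x}=0$, after which differentiation gives $(\phi(x'))'=N_\lambda x$ together with \eqref{eq-per}; thus fixed points of $\mathcal{T}_\lambda$ are exactly the solutions of $(P_\lambda)$. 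Complete continuity of $\mathcal{T}_\lambda$ follows from the continuity of $\phi^{-1}$ and of $\alpha_\lambda$, the boundedness of $N_\lambda$ on bounded sets, and an Arzel\`a--Ascoli equicontinuity estimate for the candidate images and their derivatives.

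Next I would run the continuation. If $(P_1)$ has a solution on $\partial\Omega$, then that solution belongs to $\overline{\Omega}$ and we are done; otherwise, together with $(\textsc{h}_1)$ this yields that $\mathcal{T}_\lambda$ has no fixed point on $\partial\Omega$ for every $\lambda\in\mathopen{[}0,1\mathclose{]}$. Since $(\lambda,x)\mapsto\mathcal{T}_\lambda(x)$ is an admissible completely continuous homotopy, the invariance of the Leray--Schauder degree gives $\mathrm{d}_{\mathrm{LS}}(I-\mathcal{T}_1,\Omega,0)=\mathrm{d}_{\mathrm{LS}}(I-\mathcal{T}_0,\Omega,0)$. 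It then remains to evaluate the degree at $\lambda=0$: here the autonomous structure $F(t,x,y;0)=f_0(x,y)$ lets one perform a further admissible homotopy that collapses the oscillatory part of $x$, so that the surviving fixed points are the constant functions $x\equiv c$ with $f_0(c,0)=0$. On the finite-dimensional subspace of constants this reduction identifies the degree, up to sign, with $\mathrm{d}_{\mathrm{B}}(f_0(\cdot,0),\Omega\cap\mathbb{R}^n,0)$, so that $\mathrm{d}_{\mathrm{LS}}(I-\mathcal{T}_0,\Omega,0)=\pm\,\mathrm{d}_{\mathrm{B}}(f_0(\cdot,0),\Omega\cap\mathbb{R}^n,0)\neq0$ by $(\textsc{h}_2)$. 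A nonzero Leray--Schauder degree then forces a fixed point of $\mathcal{T}_1$ in $\Omega$, that is, a solution of \eqref{eq-phi}-\eqref{eq-per} lying in $\overline{\Omega}$.

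The step I expect to be the main obstacle is the construction of the auxiliary map $\alpha_\lambda$ together with its continuous dependence on $x$. When $\phi$ is strictly monotone (as for the $p$-Laplacian) the map $c\mapsto\int_0^T\phi^{-1}(c+g(s))\,ds$ is a homeomorphism of $\mathbb{R}^n$, so $\alpha_\lambda$ is uniquely and continuously determined by inversion; under the weaker hypotheses on $\phi$ used in \cite{FeZa-17} this monotonicity is lost, the solution $c$ of the integral identity need not be unique, and one must instead secure its existence and a continuous selection through a topological-degree argument exploiting that $\phi^{-1}$ is a homeomorphism. The finite-dimensional degree reduction at $\lambda=0$ is the second delicate point, since the collapsing homotopy must be shown to keep fixed points off $\partial\Omega$ throughout.
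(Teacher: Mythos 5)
Your skeleton --- fixed-point reformulation in $\mathcal{C}^{1}_{T}$, Leray--Schauder homotopy with the dichotomy ``either $(P_{1})$ has a solution on $\partial\Omega$ (done) or $\mathcal{T}_{\lambda}$ is fixed-point free on $\partial\Omega$ for all $\lambda\in\mathopen{[}0,1\mathclose{]}$'', and collapse at $\lambda=0$ to constants with reduction to the Brouwer degree in $(\textsc{h}_{2})$ --- is exactly the classical Man\'{a}sevich--Mawhin scheme. Note first that the paper itself contains no proof of Theorem~\ref{th-cont}: it is quoted from \cite[Theorem~4.1]{MaMa-98} and \cite[Theorem~4.6]{FeZa-17}. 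The trouble is that your argument, as written, proves the theorem only under the hypotheses of \cite{MaMa-98}, where $\phi$ is in addition strictly monotone ($\langle \phi(\xi_{1})-\phi(\xi_{2}),\xi_{1}-\xi_{2}\rangle>0$ for $\xi_{1}\neq\xi_{2}$), which is what makes $c\mapsto\int_{0}^{T}\phi^{-1}(c+g(s))\,\mathrm{d}s$ a homeomorphism and yields your $\alpha_{\lambda}$. The statement here assumes only that $\phi$ is a homeomorphism with $\phi(0)=0$ --- Section~\ref{section-2} stresses that \cite{FeZa-17} is invoked precisely because ``weaker conditions on $\phi$ are assumed'' --- and in this generality the step you flag as the main obstacle is a genuine gap, not a technicality. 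Injectivity of the averaged map is lost; even existence of a zero for every continuous $g$ is unclear, since properness of $\phi^{-1}$ only forces $\|\phi^{-1}(c+g(s))\|$ to be large for $\|c\|$ large, which does not prevent cancellation in the integral, so the natural admissible homotopy down to $c\mapsto T\phi^{-1}(c)$ can fail on large spheres. And where zeros do exist, the solution set may be a continuum admitting no continuous selection, let alone one depending continuously on $(x,\lambda)$ as your homotopy requires: a degree argument can give existence of some $c$, never a selection. So the sentence ``secure its existence and a continuous selection through a topological-degree argument'' is the unproved heart of the matter.

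The way the cited source actually handles the general case is to abandon your operator altogether and pass to the equivalent first-order system \eqref{syst-eq} in $\mathbb{R}^{2n}$, namely $x'=\phi^{-1}(y)$, $y'=F(t,x,\phi^{-1}(y);\lambda)$ --- this is why the paper introduces \eqref{syst-eq}, with the remark matching $\mathcal{C}^{1}$-bounds for $x$ with uniform bounds for $(x,\phi(x'))$, immediately before stating Theorem~\ref{th-cont}. For this system the linear part is simply $u\mapsto u'$ on $T$-periodic functions, Mawhin's coincidence degree applies in its classical form, and $\phi^{-1}$ enters only as a superposition operator inside the nonlinearity, so nothing ever has to be inverted or selected. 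The degree condition at $\lambda=0$ is then computed for the averaged field $(a,b)\mapsto(\phi^{-1}(b),f_{0}(a,\phi^{-1}(b)))$ on the trace of the enlarged open set in $\mathbb{R}^{2n}$, and a decoupling homotopy exploiting $\phi^{-1}(b)=0\iff b=0$ reduces it, up to sign, to $\mathrm{d}_{\mathrm{B}}(f_{0}(\cdot,0),\Omega\cap\mathbb{R}^{n},0)$, i.e.\ to $(\textsc{h}_{2})$. If you want a complete proof in the stated generality, you should rebuild your continuation along this phase-space route rather than repair the construction of $\alpha_{\lambda}$; the remaining pieces of your argument (the $\lambda=1$ dichotomy, the identification of constant solutions $c$ with $f_{0}(c,0)=0$, the finite-dimensional reduction) transfer essentially unchanged.
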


Theorem~\ref{th-cont} corresponds to \cite[Theorem~4.1]{MaMa-98} (see also \cite[Theorem~4.6]{FeZa-17}). We recall also another version which is strictly related to the classical Mawhin continuation theorem \cite{Ma-69,Ma-93}, in an appropriate form for the $\phi$-Laplacian operators given in \cite[Theorem~3.1]{MaMa-98} (see also \cite[Theorem~3.11]{FeZa-17}).

\begin{theorem}\label{th-cont-2}
Let $F = F(t,x,y) \colon \mathopen{[}0,T\mathclose{]} \times \mathbb{R}^{n}\times \mathbb{R}^{n}\to \mathbb{R}^{n}$ be a continuous function. 
Let $\Omega \subseteq \mathcal{C}^{1}_{T}$ be an open and bounded set. Suppose that
\begin{itemize}[leftmargin=28pt,labelsep=10pt]
\item[$(\textsc{h}'_{1})$] for each $\lambda\in \mathopen{]}0,1\mathclose{[}$ the problem
\begin{equation*}
\begin{cases}
\, (\phi(x'))' = \lambda F(t,x,x'), \\
\, x(0) = x(T), \quad x'(0) = x'(T),
\end{cases}
\eqno{(P'_{\lambda})}
\end{equation*}
has no solution $x\in \partial\Omega$;
\item[$(\textsc{h}'_{2})$] the condition on the Brouwer degree
\begin{equation*}
\mathrm{d}_{\mathrm{B}}(F^{\#}, \Omega\cap \mathbb{R}^{n},0)\neq0
\end{equation*}
holds, where $F^{\#}(s):=\frac{1}{T}\int_{0}^{T}F(t,s,0)\,\mathrm{d}t$.
\end{itemize}
Then, problem \eqref{eq-phi}-\eqref{eq-per}
has at least a solution in $\overline{\Omega}$.
\end{theorem}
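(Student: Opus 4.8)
The plan is to establish the result by the same Leray--Schauder degree argument that underlies Theorem~\ref{th-cont}, recasting problem \eqref{eq-phi}-\eqref{eq-per} as a fixed point equation and exploiting an averaging homotopy tailored to the hypotheses $(\textsc{h}'_1)$ and $(\textsc{h}'_2)$. First I would recall the fixed point reformulation from \cite{MaMa-98,FeZa-17}: introducing the averaging projector $Qu := \frac{1}{T}\int_{0}^{T} u(t)\,\mathrm{d}t$, a function $x\in\mathcal{C}^{1}_{T}$ solves $(\phi(x'))' = g(t)$ together with \eqref{eq-per} precisely when $Qg = 0$, in which case $x'$ is recovered by inverting $\phi$ and integrating. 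This identifies the $T$-periodic solutions of a prescribed right-hand side with the fixed points of a completely continuous operator on $\mathcal{C}^{1}_{T}$, so that the Leray--Schauder degree becomes available on the open bounded set $\Omega$.

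The decisive choice is the homotopy. Rather than the pointwise convex combination of $F$ with $F^{\#}$, which would leave the endpoint $\lambda=0$ as a genuine autonomous $\phi$-Laplacian problem whose $T$-periodic solutions need not be constant, I would deform through the nonlocal family
\[
(\phi(x'))' = \lambda F(t,x,x') + (1-\lambda)\,\frac{1}{T}\int_{0}^{T} F(s,x(s),x'(s))\,\mathrm{d}s, \qquad \lambda\in\mathopen{[}0,1\mathclose{]}.
\]
Writing the averaged term as $QN_{F}(x)$, the point is that integrating this equation over $\mathopen{[}0,T\mathclose{]}$ and using the periodicity of $\phi(x')$ makes the left-hand side vanish and forces the right-hand side to equal $T\,QN_{F}(x)$, independently of $\lambda$.

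From here the verification that the associated homotopy of operators has no fixed point on $\partial\Omega$ splits according to $\lambda$. For $\lambda\in\mathopen{]}0,1\mathclose{[}$ the integration identity yields $QN_{F}(x)=0$, so any boundary solution would in fact solve $(\phi(x'))' = \lambda F(t,x,x')$, contradicting $(\textsc{h}'_1)$. For $\lambda=0$ the equation reads $(\phi(x'))' = QN_{F}(x)$ with a constant right-hand side; integrating again forces $QN_{F}(x)=0$ and hence $\phi(x')\equiv\text{const}$, so by periodicity $x'\equiv 0$ and $x$ reduces to a constant $s$ with $F^{\#}(s)=0$, which cannot belong to $\partial\Omega$ in view of the nonvanishing Brouwer degree in $(\textsc{h}'_2)$. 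Consequently the homotopy is admissible on $\mathopen{[}0,1\mathclose{[}$, and at $\lambda=1$ either a solution already lies on $\partial\Omega$, in which case it is a solution in $\overline{\Omega}$ and we are done, or the homotopy is admissible on the whole of $\mathopen{[}0,1\mathclose{]}$. In the latter case homotopy invariance equates the degree at $\lambda=1$ with the degree at $\lambda=0$, and the latter reduces to $\mathrm{d}_{\mathrm{B}}(F^{\#},\Omega\cap\mathbb{R}^{n},0)\neq 0$ from $(\textsc{h}'_2)$; the nonvanishing of the degree then produces a fixed point in $\Omega$, that is, a $T$-periodic solution of \eqref{eq-phi} in $\overline{\Omega}$.

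I expect the main obstacle to be not the degree bookkeeping but the careful construction of the fixed point operator and the proof of its complete continuity for a merely continuous, possibly non-monotone homeomorphism $\phi$; this is the technical heart already settled in \cite{FeZa-17}, which I would import without change. The genuinely new ingredient is the nonlocal averaging homotopy above, whose role is precisely to convert the weaker boundary condition $(\textsc{h}'_1)$, stated only for the pure scalings $\lambda F$, into the admissibility of a full homotopy connecting $f$ to an autonomous field whose degree is computable via $F^{\#}$.
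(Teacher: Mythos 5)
Your proposal is correct and is, in substance, the very proof the paper relies on: the paper does not prove Theorem~\ref{th-cont-2} itself, but quotes it from \cite[Theorem~3.1]{MaMa-98} (see also \cite[Theorem~3.11]{FeZa-17}), and the argument there is exactly your scheme --- the fixed-point reformulation of the periodic $\phi$-Laplacian problem, Mawhin's nonlocal averaging homotopy $(\phi(x'))' = \lambda F(t,x,x') + (1-\lambda)\,QN_{F}(x)$, the integration trick showing $QN_{F}(x)=0$ along the whole deformation (so that $(\textsc{h}'_{1})$ applies for $\lambda\in\mathopen{]}0,1\mathclose{[}$, with the standard ``either a solution sits on $\partial\Omega$ at $\lambda=1$ and we are done'' dichotomy), and the reduction of the degree at $\lambda=0$ to the Brouwer degree of $F^{\#}$. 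Your decision to import the construction and complete continuity of the fixed-point operator from \cite{MaMa-98,FeZa-17} is appropriate, since that is where the weaker hypotheses on $\phi$ are actually handled.

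One pinhole in your $\lambda=0$ step deserves attention. You exclude boundary solutions at $\lambda=0$ --- constants $s$ with $F^{\#}(s)=0$ --- ``in view of the nonvanishing Brouwer degree in $(\textsc{h}'_{2})$''. But the well-definedness of $\mathrm{d}_{\mathrm{B}}(F^{\#},\Omega\cap\mathbb{R}^{n},0)$ only forbids zeros of $F^{\#}$ on the relative boundary $\partial(\Omega\cap\mathbb{R}^{n})$ in $\mathbb{R}^{n}$, whereas admissibility of the homotopy at $\lambda=0$ requires that no constant $s\in\partial\Omega\cap\mathbb{R}^{n}$ (boundary taken in $\mathcal{C}^{1}_{T}$) satisfies $F^{\#}(s)=0$; a constant lying on $\partial\Omega$ need not belong to $\overline{\Omega\cap\mathbb{R}^{n}}$, since it may be a $\mathcal{C}^{1}$-limit of nonconstant elements of $\Omega$ only. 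This is precisely why \cite[Theorem~3.1]{MaMa-98} carries the separate hypothesis that the averaged equation has no solution on $\partial\Omega\cap\mathbb{R}^{n}$, a condition the paper's abbreviated statement leaves implicit. To make your argument airtight, either add that hypothesis explicitly or note that in the bound-set applications of this paper it is automatic: for $\Omega=\{x\in\mathcal{C}^{1}_{T} \colon x(t)\in G \text{ for all } t,\ \|x'\|_{\infty}<K+1\}$ one has $\Omega\cap\mathbb{R}^{n}=G$, and any constant in $\partial\Omega$ lies in $\overline{G}\setminus G=\partial G=\partial(\Omega\cap\mathbb{R}^{n})$, so the two conditions coincide.
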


We propose now some applications of the above continuation theorems which are motivated by
the theory of bound sets (cf.~\cite{GaMa-77,Ma-stud-77,Ma-79}). We shall focus our attention to the case of Theorem~\ref{th-cont}. 
Similar applications can be given starting from Theorem~\ref{th-cont-2}.

The bound set approach represents a general method to verify the abstract condition of the non-existence of solutions in $\partial\Omega$, given in $(\textsc{h}_{1})$ (respectively $(\textsc{h}'_{1})$), by introducing a more concrete condition of the non-existence of solutions tangent to the boundary of an open and bounded set $G\subseteq\mathbb{R}^{n}$. In this setting, we also introduce some Nagumo--Hartman conditions (cf.~\cite{Ha-60,MaUr-02}), which are classical in this framework (cf.~\cite{Ma-81}) and allow to find a priori bounds for $\|x'\|_{\infty}$.

Let $G\subseteq \mathbb{R}^{n}$ be an open and bounded set.
Following \cite{Ma-74} (see also \cite{Za-87ud}), we say that the
system
\begin{equation*}
(\phi(x'))' = F(t,x,x';\lambda) \eqno{(E_{\lambda})}
\end{equation*}
is a \textit{Nagumo equation with respect to $G$} (with constant $K$) if there exists a constant $K>0$ such that, for every
$\lambda\in \mathopen{[}0,1\mathclose{[}$ and for every solution $x(\cdot)$ of problem $(P_{\lambda})$, with $x(t)\in \overline{G}$
for all $t\in \mathopen{[}0,T\mathclose{]}$, it holds that $\|x'\|_{\infty} \leq K$.

If the Nagumo condition is satisfied, from Theorem~\ref{th-cont}
we deduce the next result.

\begin{theorem}\label{th-bs}
Let $F = F(t,x,y;\lambda) \colon \mathopen{[}0,T\mathclose{]} \times \mathbb{R}^{n}\times \mathbb{R}^{n}\times \mathopen{[}0,1\mathclose{]}\to \mathbb{R}^{n}$ be a continuous function such that
\begin{equation*}
F(t,x,y;1) = f(t,x,y), \qquad F(t,x,y;0) = f_{0}(x,y),
\end{equation*}
where $f_{0} \colon \mathbb{R}^{n}\times \mathbb{R}^{n} \to \mathbb{R}^{n}$
is an autonomous vector field. Suppose that there exists an open bounded set
$G\subseteq \mathbb{R}^{n}$ such that
\begin{itemize}[leftmargin=34pt,labelsep=10pt]
\item[$(\textsc{h}_{\mathrm{N}})$] the system $(E_{\lambda})$ is a Nagumo equation with respect to $G$;
\item[$(\textsc{h}_{\mathrm{BS}})$] for each $\lambda\in \mathopen{[}0,1\mathclose{[}$ there is no solution of $(P_{\lambda})$ such
that $x(t)\in\overline{G}$ for all $t\in\mathopen{[}0,T\mathclose{]}$
and $x(t_{0})\in\partial G$ for some
$t_{0}\in\mathopen{[}0,T\mathclose{]}$;
\item[$(\textsc{h}_{\mathrm{D}})$]  $\mathrm{d}_{\mathrm{B}}(f_{0}(\cdot,0),G,0)\neq0$.
\end{itemize}
Then, problem \eqref{eq-phi}-\eqref{eq-per}
has at least a solution $\tilde{x}$ such that $\tilde{x}(t)\in \overline{G}$,
for all $t\in\mathopen{[}0,T\mathclose{]}$.
\end{theorem}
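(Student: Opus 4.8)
The plan is to deduce the statement from the continuation Theorem~\ref{th-cont} by constructing a suitable open bounded set $\Omega \subseteq \mathcal{C}^{1}_{T}$ tailored to $G$ and to the Nagumo constant. Let $K > 0$ be the constant provided by the Nagumo condition $(\textsc{h}_{\mathrm{N}})$, and set
\[
\Omega := \bigl\{ x \in \mathcal{C}^{1}_{T} \colon x(t) \in G \text{ for all } t \in \mathopen{[}0,T\mathclose{]} \text{ and } \|x'\|_{\infty} < K+1 \bigr\}.
\]
First I would check that $\Omega$ is open and bounded in $\mathcal{C}^{1}_{T}$. Openness follows because $x \mapsto \|x'\|_{\infty}$ is continuous and because, $\mathopen{[}0,T\mathclose{]}$ being compact and $G$ open, any $x$ with $x(\mathopen{[}0,T\mathclose{]}) \subseteq G$ has its image at positive distance from $\partial G$, so all sufficiently $\mathcal{C}^{1}$-close functions still take values in $G$. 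Boundedness is immediate: $x(t) \in G$ forces $\|x\|_{\infty} \leq \sup_{g \in G}\|g\|$, while $\|x'\|_{\infty} < K+1$, whence $\|x\|_{\mathcal{C}^{1}}$ is uniformly bounded on $\Omega$.

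Next I would verify the degree condition $(\textsc{h}_{2})$. Identifying $\mathbb{R}^{n}$ with the constant functions in $\mathcal{C}^{1}_{T}$, a constant $x(t)\equiv c$ satisfies $x' \equiv 0$, so $\|x'\|_{\infty} = 0 < K+1$ automatically, and such an $x$ belongs to $\Omega$ precisely when $c \in G$. Thus $\Omega \cap \mathbb{R}^{n} = G$, and $(\textsc{h}_{\mathrm{D}})$ yields $\mathrm{d}_{\mathrm{B}}(f_{0}(\cdot,0),\Omega\cap\mathbb{R}^{n},0) = \mathrm{d}_{\mathrm{B}}(f_{0}(\cdot,0),G,0) \neq 0$, which is exactly $(\textsc{h}_{2})$.

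The core of the argument is $(\textsc{h}_{1})$: for $\lambda \in \mathopen{[}0,1\mathclose{[}$, no solution of $(P_{\lambda})$ lies on $\partial\Omega$. Suppose, for contradiction, that $x \in \partial\Omega$ solves $(P_{\lambda})$. Since $x \in \overline{\Omega}$ and uniform limits of functions with values in $G$ take values in $\overline{G}$, we have $x(t) \in \overline{G}$ for all $t$. The Nagumo condition $(\textsc{h}_{\mathrm{N}})$ then forces $\|x'\|_{\infty} \leq K < K+1$, so $x$ cannot belong to the velocity part of $\partial\Omega$ (where $\|x'\|_{\infty} = K+1$); here the strict slack between $K$ and $K+1$ is precisely what rules out this case. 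Consequently, as $x \notin \Omega$, the only remaining possibility is that $x(t_{0}) \in \partial G$ for some $t_{0} \in \mathopen{[}0,T\mathclose{]}$ while $x(t)\in\overline{G}$ throughout. But this is exactly the configuration excluded by the bound set condition $(\textsc{h}_{\mathrm{BS}})$, a contradiction. Hence $(\textsc{h}_{1})$ holds, and Theorem~\ref{th-cont} provides a solution $\tilde{x} \in \overline{\Omega}$ of \eqref{eq-phi}--\eqref{eq-per}; since every $x \in \overline{\Omega}$ satisfies $x(t) \in \overline{G}$ for all $t$, the solution $\tilde{x}$ enjoys the asserted localisation. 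I expect the main obstacle to be not any single computation but rather the careful design of $\Omega$: it must simultaneously encode the spatial constraint $x(t)\in G$ and a velocity bound \emph{strictly} larger than the Nagumo constant, so that the a priori estimate $\|x'\|_{\infty} \leq K$ pushes every boundary solution onto the $\partial G$ part of $\partial\Omega$ where $(\textsc{h}_{\mathrm{BS}})$ applies, while keeping $\Omega \cap \mathbb{R}^{n} = G$ so that the finite-dimensional degree hypothesis transfers verbatim.
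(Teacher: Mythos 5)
Your proposal is correct and follows essentially the same route as the paper: the same set $\Omega$ built from $G$ and the velocity bound $K+1$, the same identification $\Omega\cap\mathbb{R}^{n}=G$ for the degree condition, and the same contradiction argument using the slack between $K$ and $K+1$ to push any boundary solution onto the $\partial G$ part excluded by $(\textsc{h}_{\mathrm{BS}})$, before invoking Theorem~\ref{th-cont}. The only difference is that you spell out the openness of $\Omega$ via compactness of $\mathopen{[}0,T\mathclose{]}$, which the paper delegates to the references.
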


In the sequel, we  refer to $(\textsc{h}_{\mathrm{BS}})$ as a \textit{bound set condition}.
As previously observed, this condition of non-tangency of the solutions at the boundary of $G$ replaces condition $(\textsc{h}_{1})$ of Theorem~\ref{th-cont}.

\begin{proof}
Our argument borrows the classical scheme in \cite{Ma-74}. We are going to apply Theorem~\ref{th-cont}.
According to the Nagumo condition $(\textsc{h}_{\mathrm{N}})$, there exists a constant $K>0$ such that all the solutions of $(P_{\lambda})$ with values in $\overline{G}$ have the derivative bounded by $K$. Hence, we define the set of functions
\begin{equation*}
\Omega:= \bigl{\{}x\in \mathcal{C}^{1}_{T} \colon \text{$x(t)\in G$ for all $t\in \mathopen{[}0,T\mathclose{]}$, $\|x'\|_{\infty} < K+1$} \bigr{\}},
\end{equation*}
which is open and bounded in $\mathcal{C}^{1}_{T}$ (cf.~\cite{GaMa-77, Ma-79}).
In order to check that condition $(\textsc{h}_{1})$ holds, observe that
if (by contradiction) a solution $x$ of $(P_{\lambda})$ satisfies $x\in \partial\Omega$, then $x\in \overline{\Omega}$, hence $x(t)\in \overline{G}$ for all $t$.
Then, by the Nagumo condition, $\|x'\|_{\infty} \leq K < K+1$. By the
definition of $\Omega$ and the assumption $x\in \partial\Omega$, we cannot
have $x(t)\in G$ for all $t$ and therefore there exists
$t_{0}\in \mathopen{[}0,T\mathclose{]}$ such that $x(t_{0})\in \partial G
= \overline{G}\setminus G$. This situation is not possible in view of the
bound set condition $(\textsc{h}_{\mathrm{BS}})$.

To check $(\textsc{h}_{2})$ we simply observe that $\Omega\cap \mathbb{R}^{n} = G$
and thus apply $(\textsc{h}_{\mathrm{D}})$. This concludes the proof.
\end{proof}

The concept of \textit{Nagumo equation} was introduced by Mawhin in \cite{Ma-74} and further developed in \cite{Ma-81} as a
generalization of the classical \textit{Nagumo--Hartman condition}
\cite{Ha-60,Ha-book-64}. This latter condition was originally expressed as a growth restriction on the vector field $f(t,x,y)$
in order to provide an a priori bound on $\|x'(t)\|$ for the solutions of the second-order vector differential equation
\begin{equation}\label{eq-linear}
x'' = f(t,x,x')
\end{equation}
with $\|x(t)\|$ uniformly bounded.

For the periodic boundary value problem associated with \eqref{eq-linear},
the Nagumo--Hartman condition reads as follows: \textit{given $R>0$,
\begin{itemize}[leftmargin=28pt,labelsep=10pt]
\item[$(\textsc{nh}_1)$]
there exists a continuous function $\eta \colon \mathopen{[}0,+\infty\mathclose{[}\to \mathopen{]}0,+\infty\mathclose{[}$
such that
\begin{equation*}
\int^{+\infty} \dfrac{s}{\eta(s)}\,\mathrm{d}s = +\infty \; \text{ and }\;
\|f(t,x,y)\|\leq \eta(\|y\|), \;\forall\, t\in \mathopen{[}0,T\mathclose{]}, \;\|x\|\leq R, \;
y\in \mathbb{R}^{n};
\end{equation*}
\item[$(\textsc{nh}_2)$]
if $n>1$, there are non-negative constants
$\alpha, \beta$ such that
\begin{equation*}
\|f(t,x,y)\|\leq 2\alpha\bigl{(}\langle x,f(t,x,y)\rangle + \|y\|^2 \bigr{)} + \beta,
\quad\forall\, t\in \mathopen{[}0,T\mathclose{]}, \;\|x\|\leq R, \;
y\in \mathbb{R}^{n}.
\end{equation*}
\end{itemize}}
Under these assumptions  it holds that for every $R > 0$ there exists a constant $K > 0$ (depending on $R, \eta, \alpha, \beta, T$) such that every $T$-periodic solution $x(\cdot)$ of \eqref{eq-linear} satisfying $\|x\|_{\infty}\leq R$
is such that $\|x'\|_{\infty}\leq K$ (cf.~\cite{Be-74,Kn-71,Ma-74}). Observe that condition $(\textsc{nh}_2)$
implies that the solution $x(\cdot)$ satisfies the constraint
$\|x''(t)\| \leq \alpha \tfrac{d^2}{dt^2}\|x(t)\|^2 + \beta$ for all $t$.

In the setting of Theorem~\ref{th-bs}, for the special case $\phi(\xi)=\xi$, the condition should be applied with $F(t,x,y;\lambda)$ in place of $f(t,x,y)$ (and uniformly with respect to $\lambda\in\mathopen{[}0,1\mathclose{]}$). 

As observed in \cite[Proposition~5.2]{Ma-74},
the concept of Nagumo equation is more general as it covers some second-order differential systems for which $(\textsc{nh}_1)$-$(\textsc{nh}_2)$ are not satisfied.
Extensions of the Nagumo--Hartman conditions to more general differential operators have been obtained in more recent years, see, for
instance, \cite{MaUr-02} dealing with the vector $p$-Laplacian, and \cite{MaTh-11} for more general scalar nonlinear differential operators.
In previous works, it has been provided precise growth assumptions for the vector field $f$, generalizing to the $p$-Laplacian type operator the classical conditions $(\textsc{nh}_1)$-$(\textsc{nh}_2)$, for the second order linear differential operator (cf.~\cite[conditions $(b)$-$(c)$]{MaUr-02}).
Typically these modified Hartman--Nagumo conditions involve a growth assumption on $f$ with respect to the $y$-variable which is related to the exponent $p$ in the $p$-Laplacian operator. For a general $\phi$-Laplacian the situation appears more complicated. Indeed, the following example shows that, for any homeomorphism $\phi$ of the real line having a power-growth at infinity, we can determine a suitable growth-rate in $x'$ such that the Hartman--Nagumo condition is not satisfied.
 
\begin{example}\label{example-1}
Let $\phi \colon \mathbb{R} \to \phi(\mathbb{R})=\mathbb{R}$ be an increasing homeomorphism such that $\phi(0)=0$.
Suppose also that $\int_{1}^{+\infty} \phi^{-1}( \xi)\xi^{-\frac{1+\gamma}{\gamma}} \,\mathrm{d}\xi < \infty$, for some $\gamma>0$.
Then, the differential equation
\begin{equation}\label{eq-example}
( \phi(x') )' = \gamma \bigl{(} \phi(x') \bigr{)}^{\!\frac{1+\gamma}{\gamma}}, \quad \text{in $\mathopen{]}0,1\mathclose{[}$,}
\end{equation}
has bounded solutions $x(\cdot)$, with $x'(\cdot)$ as well as $\phi(x'(\cdot))$ unbounded. Indeed, setting $u(t):=\phi(x'(t))$ we find that $u(t) = (1-t)^{-\gamma}$ solves the equation $u'= \gamma \, u^{\frac{1+\gamma}{\gamma}}$ with $u(0)=1$.
Next, we obtain $x'(t)=\phi^{-1}((1-t)^{-\gamma})$ and thus 
\begin{equation*}
x(t) = \int_{0}^{t} \phi^{-1}((1-s)^{-\gamma})\,\mathrm{d}s
\end{equation*}
is a solution of \eqref{eq-example} such that $x(0)=0$, $x'(0)=\phi^{-1}(1)$, with $x(\cdot)$ bounded in $\mathopen{[}0,1\mathclose{]}$ and $x'(t)\to+\infty$ for $t\to1^{-}$. The example can be modified in order to consider the case of periodic solutions as well.
Clearly, if $\phi(\xi)\sim \xi^{\beta}$ at $+\infty$, then taking $0<\gamma<\beta$ we provide an example where there are bounded solutions with unbounded derivatives.
\hfill$\lhd$
\end{example}

Due to the great generality of our differential operator $\phi$, we prefer to not propose a specific growth assumption, like in the above quoted papers. On the other hand, in Section~\ref{section-3}, we provide some specific application where conditions $(\textsc{nh}_1)$-$(\textsc{nh}_2)$ can be checked by a direct inspection.

A slightly variant of Theorem~\ref{th-bs} can be obtained using Theorem~\ref{th-cont-2} and referring the concept of Nagumo equation to the system
\begin{equation*}
(\phi(x'))' = \lambda F(t,x,x').
\eqno{(E'_{\lambda})}
\end{equation*}

\begin{theorem}\label{th-bs-bis}
Let $F = F(t,x,y) \colon \mathopen{[}0,T\mathclose{]} \times \mathbb{R}^{n}\times \mathbb{R}^{n}\to \mathbb{R}^{n}$ be a continuous function. Suppose that there exists an open bounded set $G\subseteq \mathbb{R}^{n}$ such that
\begin{itemize}[leftmargin=28pt,labelsep=10pt]
\item[$(\textsc{h}'_{\mathrm{N}})$] the system $(E'_{\lambda})$ is a Nagumo equation with respect to $G$;
\item[$(\textsc{h}'_{\mathrm{BS}})$] for each $\lambda\in \mathopen{]}0,1\mathclose{[}$ there is no solution of $(P'_{\lambda})$ such
that $x(t)\in\overline{G}$ for all $t\in\mathopen{[}0,T\mathclose{]}$
and $x(t_{0})\in\partial G$ for some
$t_{0}\in\mathopen{[}0,T\mathclose{]}$;
\item[$(\textsc{h}'_{\mathrm{D}})$]  $\mathrm{d}_{\mathrm{B}}(F^{\#},G,0)\neq0$.
\end{itemize}
Then, problem \eqref{eq-phi}-\eqref{eq-per}
has at least a solution $\tilde{x}$ such that $\tilde{x}(t)\in \overline{G}$,
for all $t\in\mathopen{[}0,T\mathclose{]}$.
\end{theorem}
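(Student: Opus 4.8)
The plan is to mirror the proof of Theorem~\ref{th-bs}, replacing the appeal to Theorem~\ref{th-cont} with the Mawhin-type continuation result Theorem~\ref{th-cont-2}. First I would invoke the Nagumo condition $(\textsc{h}'_{\mathrm{N}})$ for the system $(E'_{\lambda})$ to produce a constant $K>0$ bounding $\|x'\|_{\infty}$ for every solution of $(P'_{\lambda})$ with values in $\overline{G}$, and then set up the very same open bounded subset of $\mathcal{C}^{1}_{T}$, namely
\begin{equation*}
\Omega:= \bigl{\{}x\in \mathcal{C}^{1}_{T} \colon \text{$x(t)\in G$ for all $t\in \mathopen{[}0,T\mathclose{]}$, $\|x'\|_{\infty} < K+1$} \bigr{\}}.
\end{equation*}
The openness and boundedness of $\Omega$ in $\mathcal{C}^{1}_{T}$ follow as in \cite{GaMa-77,Ma-79}.

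The core verification is that of $(\textsc{h}'_{1})$. Arguing by contradiction, I would suppose that for some $\lambda\in\mathopen{]}0,1\mathclose{[}$ there is a solution $x$ of $(P'_{\lambda})$ lying on $\partial\Omega$. Since $\partial\Omega\subseteq\overline{\Omega}$, such $x$ satisfies $x(t)\in\overline{G}$ for all $t$, whence the Nagumo condition $(\textsc{h}'_{\mathrm{N}})$ applied to $(E'_{\lambda})$ forces $\|x'\|_{\infty}\leq K < K+1$. Because $x$ cannot then lie on the ``derivative part'' of $\partial\Omega$, the only remaining way to be on the boundary is that $x(t_{0})\in\partial G=\overline{G}\setminus G$ for some $t_{0}\in\mathopen{[}0,T\mathclose{]}$; this is precisely excluded by the bound set condition $(\textsc{h}'_{\mathrm{BS}})$, giving the contradiction.

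Finally, for $(\textsc{h}'_{2})$ I would observe, exactly as before, that $\Omega\cap\mathbb{R}^{n}=G$, so that $\mathrm{d}_{\mathrm{B}}(F^{\#},\Omega\cap\mathbb{R}^{n},0)=\mathrm{d}_{\mathrm{B}}(F^{\#},G,0)\neq 0$ by $(\textsc{h}'_{\mathrm{D}})$. Applying Theorem~\ref{th-cont-2} then yields a solution of \eqref{eq-phi}-\eqref{eq-per} in $\overline{\Omega}$, and since every element of $\overline{\Omega}$ takes values in $\overline{G}$, this is the desired $T$-periodic solution $\tilde{x}$ with $\tilde{x}(t)\in\overline{G}$ for all $t\in\mathopen{[}0,T\mathclose{]}$.

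I do not expect a genuine obstacle here, as the argument is structurally identical to that of Theorem~\ref{th-bs}. The only point requiring mild care is the bookkeeping on the parameter range: $(\textsc{h}'_{1})$, $(\textsc{h}'_{\mathrm{BS}})$ and the Nagumo hypothesis for $(E'_{\lambda})$ are all phrased over the open interval $\mathopen{]}0,1\mathclose{[}$ rather than $\mathopen{[}0,1\mathclose{[}$ as in Theorem~\ref{th-bs}, which is exactly the range demanded by the hypothesis $(\textsc{h}'_{1})$ of Theorem~\ref{th-cont-2}, so no endpoint needs separate treatment and the degree condition $(\textsc{h}'_{2})$ is supplied directly by $F^{\#}$.
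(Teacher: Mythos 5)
Your proposal is correct and follows exactly the route the paper intends: the paper gives no separate proof of Theorem~\ref{th-bs-bis} precisely because it is obtained by repeating the proof of Theorem~\ref{th-bs} verbatim, with Theorem~\ref{th-cont-2} in place of Theorem~\ref{th-cont} and the same set $\Omega$, checking $(\textsc{h}'_{1})$ via $(\textsc{h}'_{\mathrm{N}})$ and $(\textsc{h}'_{\mathrm{BS}})$ and $(\textsc{h}'_{2})$ via $\Omega\cap\mathbb{R}^{n}=G$ and $(\textsc{h}'_{\mathrm{D}})$. Your remark on the parameter range $\lambda\in\mathopen{]}0,1\mathclose{[}$ matching the hypotheses of Theorem~\ref{th-cont-2} is exactly the only adjustment needed.
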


In the sequel, the following lemma will be used as a technical step to provide the desired bounds on $\|x'\|_{\infty}$ in the context of condition $(\textsc{h}_N)$. 
We denote by $\mathbb{S}^{n-1} :=\partial B(0,1)$ the unit sphere in $\mathbb{R}^{n}$.

\begin{lemma}\label{lem-2.1}
Let $\mathcal{F}$ be a family of absolutely continuous and $T$-periodic functions such that
\begin{itemize}[leftmargin=28pt,labelsep=10pt]
\item there exists $M_{0}\geq 0$ such that for all $\omega\in\mathbb{S}^{n-1}$ and $z\in\mathcal{F}$ there exists $t_{0}\in\mathopen{[}0,T\mathclose{]}$ such that $| \langle z(t_{0}),\omega\rangle| \leq M_{0}$;
\item there exist $M_{1}>0$ and $p\in\mathopen{[}1,+\infty\mathclose{]}$ such that $\|z'\|_{L^{p}} \leq M_{1}$ for every $z\in\mathcal{F}$.
\end{itemize}
Then, there exists $K=K(M_{0},M_{1})>0$ such that $\|z\|_{\infty}\leq K$ for every $z\in\mathcal{F}$.
\end{lemma}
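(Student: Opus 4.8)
The plan is to estimate $\|z(t)\|$ at an \emph{arbitrary} fixed $t$ by choosing the test direction $\omega$ adapted to that very point, so that the projection hypothesis and the derivative bound can be chained together through the fundamental theorem of calculus. Concretely, I would fix $z\in\mathcal{F}$ and $t\in\mathopen{[}0,T\mathclose{]}$; if $z(t)=0$ there is nothing to estimate, and otherwise I set $\omega:=z(t)/\|z(t)\|\in\mathbb{S}^{n-1}$, which realizes the norm as a scalar projection, $\|z(t)\|=\langle z(t),\omega\rangle$.

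With this $\omega$ the first bullet supplies a point $t_{0}\in\mathopen{[}0,T\mathclose{]}$ (depending on $\omega$, hence on $t$ and $z$) with $|\langle z(t_{0}),\omega\rangle|\leq M_{0}$. Since $z$ is absolutely continuous, integrating $\langle z'(\cdot),\omega\rangle$ from $t_{0}$ to $t$ gives
\[
\|z(t)\|=\langle z(t),\omega\rangle=\langle z(t_{0}),\omega\rangle+\int_{t_{0}}^{t}\langle z'(s),\omega\rangle\,\mathrm{d}s.
\]
Bounding the first term by $M_{0}$, applying the Cauchy--Schwarz inequality $|\langle z'(s),\omega\rangle|\leq\|z'(s)\|$ (recall $\|\omega\|=1$), and enlarging the interval of integration to all of $\mathopen{[}0,T\mathclose{]}$ irrespective of whether $t_{0}\leq t$ or $t_{0}>t$, I obtain $\|z(t)\|\leq M_{0}+\|z'\|_{L^{1}}$.

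It then remains to pass from the $L^{1}$-norm to the given $L^{p}$-bound. By H\"{o}lder's inequality, with $q$ the conjugate exponent of $p$,
\[
\|z'\|_{L^{1}}=\int_{0}^{T}\|z'(s)\|\,\mathrm{d}s\leq T^{1/q}\,\|z'\|_{L^{p}}\leq M_{1}\,T^{1-1/p},
\]
the endpoint cases $p=1$ and $p=\infty$ producing the factors $1$ and $T$ respectively. Hence $\|z(t)\|\leq M_{0}+M_{1}T^{1-1/p}$ for every $t$, and taking the supremum over $t$ yields the claim with $K:=M_{0}+M_{1}T^{1-1/p}$, which indeed depends only on $M_{0},M_{1}$ (and on the fixed data $T,p$). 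No step is genuinely hard; the one point requiring care is that the projection hypothesis is quantified over \emph{all} unit vectors, so it must be invoked precisely in the direction $z(t)/\|z(t)\|$ that turns $\|z(t)\|$ into a scalar quantity — everything else is a routine absolute-continuity estimate followed by H\"{o}lder. Note that $T$-periodicity is not actually needed for this particular bound, since $t_{0}$ and $t$ already lie in one period.
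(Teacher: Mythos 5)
Your proof is correct and follows essentially the same argument as the paper: choose $\omega$ as the normalized value of $z$ so that the norm becomes a scalar projection, invoke the projection hypothesis to get $t_{0}$, integrate $\langle z'(\cdot),\omega\rangle$ via absolute continuity, and finish with H\"{o}lder, arriving at the same constant $K=M_{0}+T^{\frac{p-1}{p}}M_{1}$. The only (immaterial) difference is that the paper evaluates at a maximum point $\hat{t}$ of $\|z(\cdot)\|$ while you estimate at an arbitrary $t$ and take the supremum afterwards; your closing observation that $T$-periodicity is not needed within the lemma itself is also accurate.
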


\begin{proof}
Let $z\in\mathcal{F}$ with $z\not\equiv 0$. Let $\hat{t}\in\mathopen{[}0,T\mathclose{]}$ be such that $\|z(\hat{t})\|=\|z\|_{\infty}\neq0$. We set $\omega=z(\hat{t})/\|z(\hat{t})\|$ and $t_{0}=t_{0}(\omega,z)$ as in the first hypothesis. Therefore, we have
\begin{align*}
\|z\|_{\infty}=\|z(\hat{t})\|=\bigl{|} \langle z(\hat{t}),\omega\rangle \bigr{|} 
&= \biggl{|} \langle z(t_{0}),\omega\rangle +\int_{t_{0}}^{\hat{t}} \langle z'(s),\omega\rangle \,\mathrm{d}s \biggr{|}
\\
&\leq M_{0} + \|z'\|_{L^{1}} \leq M_{0}+ T^{\frac{p-1}{p}} \|z'\|_{L^{p}} \leq M_{0}+T^{\frac{p-1}{p}} M_{1}.
\end{align*}
Setting $K:=M_{0}+T^{\frac{p-1}{p}}M_{1}$ (which does not depend on $z$), the conclusion holds.
\end{proof}

\medskip

We discuss now a technique introduced and developed in \cite{GaMa-77,Ma-74,Ma-stud-77,Ma-79} to verify the bound set condition.
It consists in controlling locally the solutions of $(E_{\lambda})$ at the boundary of $G$, by means of suitable Lyapunov-like functionals which are usually called \textit{bounding functions}
(see also \cite{Ma-stud-77} for an introduction to this topic).
To this end we give the following definition (see also \cite{FeZa-88,Za-87,Za-87ud}).

\begin{definition}\label{def-bf}
Let $G\subseteq \mathbb{R}^{n}$ be an open and bounded set.
Assume that for each $u\in\partial G$, there exist an open ball
$B(u,r_{u})$ of center $u$ and radius $r_{u} > 0$ and a function
$V_{u} \colon B(u,r_{u})\to \mathbb{R}$ such that $V_{u}(u) = 0$ and
\begin{equation*}
\overline{G} \cap B(u,r_{u}) \subseteq \bigl{\{} x\in B(u,r_{u}) \colon V_{u}(x)\leq 0 \bigr{\}}.
\end{equation*}
In this case, the family $(V_{u})_{u\in\partial G}$ is called
a \textit{set of bounding functions} for $G$.
\end{definition}

We are in position to present an application of the method of bounding functions
to the periodic problem associated with \eqref{eq-phi} for a homeomorphism $\phi$ having the following form
\begin{equation}\label{phi-A}
\phi(\xi) = A(\xi)\xi,\quad \text{for every $\xi\in\mathbb{R}^{n}\setminus\{0\}$,}
\qquad \phi(0)=0,
\end{equation}
where
$A \colon \mathbb{R}^{n}\setminus\{0\}\to \mathopen{]}0,+\infty\mathclose{[}$
is a continuous function.
As shown in \cite{FeZa-17} this case includes most nonlinear differential
operators considered in the literature, in particular the vector $p$-Laplacians.
Moreover, observe also that \eqref{phi-A} does not imply that the operator $\phi$ is monotone (cf.~\cite[Section~5]{FeZa-17}).

The following result holds.

\begin{lemma}\label{lem-bf1}
Let $\phi$ be
a homeomorphism of $\mathbb{R}^{n}$ of the form \eqref{phi-A}. 
Let $F = \break F(t,x,y;\lambda) \colon \mathopen{[}0,T\mathclose{]}
\times \mathbb{R}^{n}\times \mathbb{R}^{n}\times
\mathopen{[}0,1\mathclose{]}\to \mathbb{R}^{n}$
be a continuous function and let $(V_{u})_{u\in\partial G}$
be a family of bounding functions of class $\mathcal{C}^2$
for an open bounded set $G\subseteq \mathbb{R}^{n}$.
Suppose that
\begin{itemize}[leftmargin=30pt,labelsep=10pt]
\item [$(\textsc{h}_{V})$] for every $u\in \partial G$, $t\in \mathopen{[}0,T\mathclose{]}$ and $\lambda\in \mathopen{[}0,1\mathclose{[}$
\begin{equation*}
\langle V''_{u}(u)y,\phi(y)\rangle + \langle V'_{u}(u),F(t,u,y;\lambda)\rangle > 0,
\quad \forall\, y\in \mathbb{R}^{n} \colon \langle V'_{u}(u),y\rangle = 0.
\end{equation*}
\end{itemize}
Then, the bound set condition $(\textsc{h}_{\mathrm{BS}})$ holds with respect to problem $(P_\lambda)$.
\end{lemma}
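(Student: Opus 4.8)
The plan is to argue by contradiction along the classical bound-set scheme, adapting it to the structure \eqref{phi-A}. Suppose that for some $\lambda\in\mathopen{[}0,1\mathclose{[}$ there is a solution $x$ of $(P_{\lambda})$ with $x(t)\in\overline{G}$ for all $t\in\mathopen{[}0,T\mathclose{]}$ and $x(t_{0})\in\partial G$ for some $t_{0}$. Using the periodic boundary conditions and the $T$-periodicity of $F$ in $t$, I would first extend $x$ to a $T$-periodic solution on all of $\mathbb{R}$ (legitimate since $\phi(x'(\cdot))$ is $C^{1}$ and $x(0)=x(T)$, $x'(0)=x'(T)$), so that $t_{0}$ may be treated as a point with a genuine two-sided neighborhood. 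Setting $u:=x(t_{0})\in\partial G$, I pick the associated bounding function $V_{u}$ on $B(u,r_{u})$. By continuity $x(t)\in B(u,r_{u})$ for $t$ near $t_{0}$, whence $x(t)\in\overline{G}\cap B(u,r_{u})\subseteq\{V_{u}\leq 0\}$.

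Next I would examine $g(t):=V_{u}(x(t))$. Since $V_{u}\in\mathcal{C}^{2}$ and $x\in\mathcal{C}^{1}$, the function $g$ is of class $\mathcal{C}^{1}$ with $g'(t)=\langle V'_{u}(x(t)),x'(t)\rangle$. From $g(t)\leq 0=g(t_{0})$ near $t_{0}$, the point $t_{0}$ is a local maximum, so $g'(t_{0})=0$; writing $y:=x'(t_{0})$, this yields the orthogonality $\langle V'_{u}(u),y\rangle=0$, which is exactly the constraint under which $(\textsc{h}_{V})$ is assumed.

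The crucial point, and the main obstacle, is that one cannot differentiate $g'$ a second time, since $x''$ need not exist: only $\phi(x'(\cdot))$ is $\mathcal{C}^{1}$. To bypass this, I would introduce the auxiliary function $\psi(t):=\langle V'_{u}(x(t)),\phi(x'(t))\rangle$. Because $V'_{u}\circ x$ is $\mathcal{C}^{1}$ and $\phi(x'(\cdot))$ is $\mathcal{C}^{1}$ with $(\phi(x'))'=F(t,x,x';\lambda)$ by the equation, $\psi$ is differentiable and
\[
\psi'(t)=\langle V''_{u}(x(t))x'(t),\phi(x'(t))\rangle+\langle V'_{u}(x(t)),F(t,x(t),x'(t);\lambda)\rangle.
\]
Evaluating at $t_{0}$ with $x(t_{0})=u$, $x'(t_{0})=y$ and $\langle V'_{u}(u),y\rangle=0$, hypothesis $(\textsc{h}_{V})$ gives $\psi'(t_{0})>0$. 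Moreover $\psi(t_{0})=0$: if $y\neq 0$ this follows from $\phi(y)=A(y)y$ together with $\langle V'_{u}(u),y\rangle=0$, and if $y=0$ from $\phi(0)=0$.

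Finally, I would transfer this sign information back to $g'$, and here the form \eqref{phi-A} is essential. For $x'(t)\neq 0$ one has $\psi(t)=A(x'(t))\,g'(t)$ with $A(x'(t))>0$, while for $x'(t)=0$ both $\psi(t)$ and $g'(t)$ vanish; hence $\psi(t)$ and $g'(t)$ always share the same sign. Since $\psi(t_{0})=0$ and $\psi'(t_{0})>0$, the function $\psi$ is strictly negative on a left neighborhood $\mathopen{]}t_{0}-\delta,t_{0}\mathclose{[}$, so $g'<0$ there; thus $g$ is strictly decreasing and $g(t)>g(t_{0})=0$ on $\mathopen{]}t_{0}-\delta,t_{0}\mathclose{[}$, contradicting $g\leq 0$. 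This contradiction establishes the bound set condition $(\textsc{h}_{\mathrm{BS}})$.
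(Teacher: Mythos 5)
Your argument is essentially the paper's own proof: your $\psi$ is exactly the auxiliary function $w(t)=\langle \phi(x'(t)),V_{u}'(x(t))\rangle$ used there, the orthogonality $\langle V_{u}'(u),x'(t_{0})\rangle=0$ at the touching time is derived the same way, and the transfer of sign between $\psi$ and $g'$ through $\psi(t)=A(x'(t))\,g'(t)$ with $A>0$ is the paper's key step verbatim; the only cosmetic difference is that you run the monotonicity contradiction on a left neighborhood of $t_{0}$, whereas the paper uses a right one.

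One step, however, is not covered by the hypotheses as you state it: the periodic extension. The lemma assumes only that $F$ is continuous on $\mathopen{[}0,T\mathclose{]}\times\mathbb{R}^{n}\times\mathbb{R}^{n}\times\mathopen{[}0,1\mathclose{]}$; no $T$-periodicity of $F$ in $t$ is posited (the homotopy $F$ need not satisfy $F(0,x,y;\lambda)=F(T,x,y;\lambda)$, even though the underlying $f$ is $T$-periodic). Hence, when $t_{0}\in\{0,T\}$, the periodic extension of $x$ is genuinely $\mathcal{C}^{1}$ (this uses only $x(0)=x(T)$, $x'(0)=x'(T)$), but the extension of $\phi(x'(\cdot))$ is merely continuous at the glue point: its one-sided derivatives there are $F(0,u,y;\lambda)$ and $F(T,u,y;\lambda)$, which may differ, so your claim that $\psi$ is differentiable at $t_{0}$ can fail. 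The defect is easily repaired inside your own scheme, because you only ever use the \emph{left} derivative of $\psi$: normalize $t_{0}\in\mathopen{]}0,T\mathclose{]}$ instead of extending, obtain $g'(T)=0$ from $g'(0^{+})\leq 0\leq g'(T^{-})$ together with $x'(0)=x'(T)$ (as the paper does at the endpoint), and invoke $(\textsc{h}_{V})$ at $t=T$ to get $\psi'(T^{-})>0$, which is all the left-neighborhood contradiction needs; the paper instead normalizes $t_{0}\in\mathopen{[}0,T\mathclose{[}$ and works on a right neighborhood contained in $\mathopen{[}0,T\mathclose{]}$, avoiding the extension altogether.
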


\begin{proof}
By contradiction, let us suppose that $(\textsc{h}_{\mathrm{BS}})$ is not valid.
Therefore, for some $\lambda\in \mathopen{[}0,1\mathclose{[}$
there exists a solution $x(\cdot)$ of $(P_{\lambda})$
such that $x(t)\in\overline{G}$ for all
$t\in\mathopen{[}0,T\mathclose{]}$ and $x(t_{0})\in\partial G$ for some
$t_{0}\in\mathopen{[}0,T\mathclose{]}$.
Due to the boundary condition $x(0) = x(T)$, without loss of generality, we can suppose $0\leq t_{0} < T$.
We consider the point $u:=x(t_{0})\in\partial G$, the ball $B(u,r_{u})$
and the function $V_{u}$ according to Definition~\ref{def-bf}.
Thus, there exists an open neighborhood $U$ of $t_{0}$
such that $V_{u}(x(t))\leq 0$ for all $t\in U\cap \mathopen{[}0,T\mathclose{]}$ and $V_{u}(x(t_{0}))=0$,
so that $t_{0}$ is a point of maximum for the function
$v \colon U\cap \mathopen{[}0,T\mathclose{]}\to \mathbb{R}$,
$v(t):= V_{u}(x(t))$. By the chain rule, we have
$v'(t) = \langle V_{u}'(x(t)),x'(t)\rangle$ for all $t\in U\cap \mathopen{[}0,T\mathclose{]}$.
If $0<t_{0} < T$, then $v'(t_{0}) = 0$ and therefore
\begin{equation}\label{eq-2.5}
\langle V_{u}'(u),y\rangle = 0, \quad\text{for $y = x'(t_{0})$.}
\end{equation}
On the other hand, if $t_{0} = 0$ then both $0$ and $T$ are maximum points
for the differentiable function $v(\cdot)$ which now
is defined on a neighborhood of both points.
Since $v(0) = v(T) = 0$ and $v(t)\leq 0$ for $t$ in a right neighborhood of
$0$ and in a left neighborhood of $T$, then
$v'(0)=v'(0^+)\leq 0 \leq v'(T^-)=v'(T)$ and,
using the boundary condition $x'(0) = x'(T)$, we obtain again \eqref{eq-2.5}.

As a next step, we introduce the auxiliary function
\begin{equation*}
w(t):= \langle \phi(x'(t)),V_{u}'(x(t))\rangle.
\end{equation*}
Observe that $w(t_{0}) = 0$. This is trivial if $x'(t_{0}) = 0$, because $\phi(0)=0$.
If $x'(t_{0})\neq0$, then $w(t_{0}) = A(x'(t_{0}))\langle x'(t_{0}),V_{u}'(x(t_{0}))\rangle = 0$, by \eqref{eq-2.5}. 
By differentiating $w(\cdot)$ in a neighborhood of $t_{0}$ we obtain
\begin{equation*}
\begin{aligned}
w'(t) &=\dfrac{\mathrm{d}}{\mathrm{d}t} \langle \phi(x'(t)),V_{u}'(x(t))\rangle
\\
&= \langle ( \phi(x'(t)) )', V_{u}'(x(t)) \rangle
+ \langle \phi(x'(t)),V_{u}''(x(t)) x'(t)\rangle
\\
&= \langle F(t,x(t),x'(t);\lambda), V_{u}'(x(t)) \rangle
+ \langle V_{u}''(x(t)) x'(t),\phi(x'(t))\rangle.
\end{aligned}
\end{equation*}
Hence, from $(\textsc{h}_{V})$ and \eqref{eq-2.5} we find that $w'(t_{0}) > 0$. This implies that there exists an interval $\mathopen{[}t_{0},t_{0}+\varepsilon\mathclose{[}\subseteq U\cap \mathopen{[}0,T\mathclose{]}$ such that
$w(t) > 0$ for all $t\in \mathopen{]}t_{0},t_{0}+\varepsilon\mathclose{[}$.
From the definition of $w(\cdot)$ and $\phi(0)=0$, we must have $x'(t)\neq0$ for all $t \in \mathopen{]}t_{0},t_{0}+\varepsilon\mathclose{[}$ and hence
\begin{equation*}
w(t) = A(x'(t))\langle x'(t),V'_{u}(x(t))\rangle > 0, \quad \text{for all $t \in \mathopen{]}t_{0},t_{0}+\varepsilon\mathclose{[}$.}
\end{equation*}
Using the fact that $A(\xi) > 0$ for all $\xi\neq0$, we obtain that the map
\begin{equation*}
t\mapsto \langle x'(t),V'_{u}(x(t))\rangle = \dfrac{\mathrm{d}}{\mathrm{d}t}V_{u}(x(t)) = v'(t)
\end{equation*}
is strictly positive on $\mathopen{]}t_{0},t_{0}+\varepsilon\mathclose{[}$. We conclude that
$v(\cdot)$ is strictly increasing on a right neighborhood of $t_{0}$ and thus
$v(t) > v(t_{0})$ for all $t \in \mathopen{]}t_{0},t_{0}+\varepsilon\mathclose{[}$. This contradicts the fact that
$t_{0}$ is a maximum point for $v(\cdot)$.
The proof is thus complete.
\end{proof}

At last, by combining Theorem~\ref{th-bs} with Lemma~\ref{lem-bf1}, we are in position to state our main result which combines the abstract continuation theorem with the bounding function technique.

\begin{theorem}\label{th-bs2}
Let $F = F(t,x,y;\lambda) \colon \mathopen{[}0,T\mathclose{]} \times \mathbb{R}^{n}\times \mathbb{R}^{n}\times \mathopen{[}0,1\mathclose{]}\to \mathbb{R}^{n}$ be a continuous function such that
\begin{equation*}
F(t,x,y;1) = f(t,x,y), \qquad F(t,x,y;0) = f_{0}(x,y),
\end{equation*}
where $f_{0} \colon \mathbb{R}^{n}\times \mathbb{R}^{n} \to \mathbb{R}^{n}$
is an autonomous vector field. Suppose that there exists an open bounded set
$G\subseteq \mathbb{R}^{n}$ satisfying condition $(\textsc{h}_{\mathrm{N}})$. Assume that $G$ admits a family $(V_{u})_{u\in\partial G}$ of bounding functions of class $\mathcal{C}^2$ satisfying $(\textsc{h}_{V})$. Assume also $(\textsc{h}_{D})$. 
Then, problem \eqref{eq-phi}-\eqref{eq-per} has at least a solution $\tilde{x}$ such that $\tilde{x}(t)\in \overline{G}$, for all $t\in\mathopen{[}0,T\mathclose{]}$.
\end{theorem}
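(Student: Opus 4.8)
The plan is to recognize that Theorem~\ref{th-bs2} is nothing more than the conjunction of the two results it was built from, namely the abstract bound set theorem (Theorem~\ref{th-bs}) and the bounding function criterion (Lemma~\ref{lem-bf1}). Accordingly, I would organise the proof around verifying the three hypotheses $(\textsc{h}_{\mathrm{N}})$, $(\textsc{h}_{\mathrm{BS}})$, $(\textsc{h}_{\mathrm{D}})$ of Theorem~\ref{th-bs}, and then invoke that theorem to obtain the desired $T$-periodic solution in $\overline{G}$.

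First I would observe that two of these three hypotheses are assumed outright in the statement: the Nagumo condition $(\textsc{h}_{\mathrm{N}})$ is part of the hypotheses on $G$, and the degree condition $(\textsc{h}_{\mathrm{D}})$ is assumed explicitly. Thus no work is required for them, and the entire content of the argument reduces to producing the bound set condition $(\textsc{h}_{\mathrm{BS}})$.

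To establish $(\textsc{h}_{\mathrm{BS}})$, I would appeal to Lemma~\ref{lem-bf1}. The hypotheses of that lemma are exactly what is available here: $\phi$ is a homeomorphism of the special form \eqref{phi-A}, the map $F$ is continuous, and $(V_{u})_{u\in\partial G}$ is a family of bounding functions of class $\mathcal{C}^{2}$ for $G$ satisfying $(\textsc{h}_{V})$. Lemma~\ref{lem-bf1} then yields precisely $(\textsc{h}_{\mathrm{BS}})$ with respect to problem $(P_{\lambda})$. With $(\textsc{h}_{\mathrm{N}})$, $(\textsc{h}_{\mathrm{BS}})$ and $(\textsc{h}_{\mathrm{D}})$ simultaneously in force, Theorem~\ref{th-bs} applies and furnishes a solution $\tilde{x}$ of \eqref{eq-phi}--\eqref{eq-per} with $\tilde{x}(t)\in\overline{G}$ for all $t\in\mathopen{[}0,T\mathclose{]}$, which is the assertion.

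Since the whole argument is just a concatenation of two previously proved results, there is no genuine obstacle at this stage; the substantive work has already been carried out in Lemma~\ref{lem-bf1} (the differential inequality for the auxiliary function $w(t)=\langle \phi(x'(t)),V_{u}'(x(t))\rangle$) and in Theorem~\ref{th-bs} (the construction of the open bounded set $\Omega\subseteq\mathcal{C}^{1}_{T}$ on which the continuation theorem is run). The one point I would be careful to flag is that the applicability of Lemma~\ref{lem-bf1} tacitly requires $\phi$ to be of the form \eqref{phi-A}; this is the standing assumption under which the present family of results is stated, and I would make sure it is explicitly in force before citing the lemma, since it is exactly the structural hypothesis $A(\xi)>0$ that drives the sign argument behind $(\textsc{h}_{\mathrm{BS}})$.
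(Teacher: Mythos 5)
Your proposal is correct and matches the paper exactly: the paper gives no separate proof of Theorem~\ref{th-bs2}, stating it as the immediate combination of Theorem~\ref{th-bs} (with $(\textsc{h}_{\mathrm{N}})$ and $(\textsc{h}_{\mathrm{D}})$ assumed directly) and Lemma~\ref{lem-bf1} (which supplies $(\textsc{h}_{\mathrm{BS}})$ from $(\textsc{h}_{V})$). Your remark that the structural hypothesis \eqref{phi-A} on $\phi$ must be in force for Lemma~\ref{lem-bf1} is well taken, since the theorem's statement leaves it implicit as the standing assumption introduced just before the lemma.
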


Using the continuation Theorem~\ref{th-cont-2} we can provide a variant of Theorem~\ref{th-bs2} which reads as follows.
Clearly the next result is strongly connected with the classical one in \cite{Ma-74} for $\phi(\xi)=\xi$.

\begin{theorem}\label{th-bs3}
Let $F = F(t,x,y) \colon \mathopen{[}0,T\mathclose{]} \times \mathbb{R}^{n}\times \mathbb{R}^{n}\to \mathbb{R}^{n}$ be a continuous function. Suppose that there exists an open bounded set
$G\subseteq \mathbb{R}^{n}$ satisfying condition $(\textsc{h}'_{\mathrm{N}})$. Assume that $G$ admits a family $(V_{u})_{u\in\partial G}$ of bounding functions of class $\mathcal{C}^2$ satisfying 
\begin{itemize}[leftmargin=30pt,labelsep=10pt]
\item [$(\textsc{h}'_{V})$] for every $u\in \partial G$, $t\in \mathopen{[}0,T\mathclose{]}$ and $\lambda\in \mathopen{]}0,1\mathclose{[}$
\begin{equation*}
\langle V''_{u}(u)y,\phi(y)\rangle + \lambda \langle V'_{u}(u),F(t,u,y)\rangle > 0,
\quad \forall\, y\in \mathbb{R}^{n} \colon \langle V'_{u}(u),y\rangle = 0.
\end{equation*}
\end{itemize}
Assume also $(\textsc{h}'_{\mathrm{D}})$.
Then, problem \eqref{eq-phi}-\eqref{eq-per} has at least a solution $\tilde{x}$ such that $\tilde{x}(t)\in \overline{G}$, for all $t\in\mathopen{[}0,T\mathclose{]}$.
\end{theorem}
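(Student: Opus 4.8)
The plan is to prove Theorem~\ref{th-bs3} exactly as Theorem~\ref{th-bs2} was obtained, but replacing the pair (Theorem~\ref{th-bs}, Lemma~\ref{lem-bf1}) by the pair (Theorem~\ref{th-bs-bis}, and an analogue of Lemma~\ref{lem-bf1} tailored to the rescaled problem $(P'_{\lambda})$). Since $(\textsc{h}'_{\mathrm{N}})$ and $(\textsc{h}'_{\mathrm{D}})$ are assumed outright, the only thing left to verify is the bound set condition $(\textsc{h}'_{\mathrm{BS}})$, after which Theorem~\ref{th-bs-bis} closes the argument immediately. So the core of the work is to show that, for a homeomorphism $\phi$ of the form \eqref{phi-A}, condition $(\textsc{h}'_{V})$ forces $(\textsc{h}'_{\mathrm{BS}})$ to hold with respect to $(P'_{\lambda})$.

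To establish this implication I would reproduce the contradiction argument of Lemma~\ref{lem-bf1}, keeping track of the factor $\lambda$. Suppose $(\textsc{h}'_{\mathrm{BS}})$ fails: for some $\lambda\in\mathopen{]}0,1\mathclose{[}$ there is a solution $x(\cdot)$ of $(P'_{\lambda})$ with $x(t)\in\overline{G}$ for all $t$ and $x(t_{0})\in\partial G$ for some $t_{0}$, which by periodicity we may take with $0\leq t_{0}<T$. Setting $u:=x(t_0)\in\partial G$ and taking the local bounding function $V_{u}$, the point $t_{0}$ is a maximum of $v(t):=V_{u}(x(t))$, so the orthogonality relation \eqref{eq-2.5}, namely $\langle V'_{u}(u),x'(t_{0})\rangle=0$, holds; the boundary case $t_{0}=0$ is handled verbatim using $x'(0)=x'(T)$. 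Introducing $w(t):=\langle\phi(x'(t)),V'_{u}(x(t))\rangle$, one has $w(t_0)=0$ by \eqref{phi-A} and \eqref{eq-2.5} exactly as before, and differentiating gives $w'(t_0)=\langle V''_{u}(u)x'(t_{0}),\phi(x'(t_{0}))\rangle+\lambda\langle V'_{u}(u),F(t_{0},u,x'(t_{0}))\rangle$, where now the right-hand side of the differential equation is $\lambda F(t,x,x')$. Applying $(\textsc{h}'_{V})$ with $y=x'(t_{0})$ (which is admissible precisely because $\langle V'_{u}(u),y\rangle=0$) yields $w'(t_0)>0$; the remainder of the proof is identical, i.e. $w>0$ on a right neighbourhood of $t_{0}$, hence $v'>0$ there by positivity of $A$, so $v$ strictly increases, contradicting the maximality of $t_{0}$.

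The argument is essentially a transcription of Lemma~\ref{lem-bf1}, so I do not expect a genuine obstacle; the one point deserving attention is bookkeeping of the scaling. The expression for $w'(t_0)$ produced by $(P'_{\lambda})$ carries the coefficient $\lambda$ on the $F$-term, and condition $(\textsc{h}'_{V})$ is written precisely to match this; moreover $\lambda\in\mathopen{]}0,1\mathclose{[}$ is strictly positive, so the parameter range used here coincides with the one in $(\textsc{h}'_{\mathrm{BS}})$ and in the hypotheses of Theorem~\ref{th-bs-bis}. Once $(\textsc{h}'_{\mathrm{BS}})$ is secured, Theorem~\ref{th-bs-bis}, applied with $(\textsc{h}'_{\mathrm{N}})$ and $(\textsc{h}'_{\mathrm{D}})$, provides a solution $\tilde{x}$ of \eqref{eq-phi}--\eqref{eq-per} with $\tilde{x}(t)\in\overline{G}$ for all $t\in\mathopen{[}0,T\mathclose{]}$, completing the proof.
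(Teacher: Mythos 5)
Your proof is correct and follows exactly the route the paper intends: Theorem~\ref{th-bs3} is stated there without a written proof, as a variant of Theorem~\ref{th-bs2} obtained from Theorem~\ref{th-cont-2}, i.e.\ by combining Theorem~\ref{th-bs-bis} with the $\lambda$-scaled transcription of Lemma~\ref{lem-bf1}, which is precisely your verification of $(\textsc{h}'_{\mathrm{BS}})$. Your bookkeeping is accurate on the one delicate point, namely that the coefficient $\lambda$ produced by $(P'_{\lambda})$ in $w'(t_{0})$ matches the statement of $(\textsc{h}'_{V})$ and that the range $\lambda\in\mathopen{]}0,1\mathclose{[}$ agrees with that in $(\textsc{h}'_{\mathrm{BS}})$, so nothing is missing.
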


Variants of the results presented in this section could be provided for the more general case of a vector field satisfying the Carath\'{e}odory conditions.
For the linear differential operator $\phi(\xi)=\xi$, this approach has been already considered in \cite{MaTh-11} (see also \cite{AnKoMa-09,AnMaPa-09,Ta-08,TaZa-07}).
In the Carath\'{e}odory case, pointwise estimates must be replaced by suitable control of the solutions in a neighborhood of the boundary of the set $G$. For simplicity in the exposition, we will not investigate this framework.

\section{Applications}\label{section-3}

In this section, we present some applications of our main results contained in Section~\ref{section-2}. In Section~\ref{section-3.1} we consider frictionless vector fields $f=f(t,x)$, while in Section~\ref{section-3.2} and Section~\ref{section-3.3} we analyse more general situations: a $\phi$-Laplacian Rayleigh-type equation and a $\phi$-Laplacian Li\'{e}nard equation, respectively.

\subsection{The case $f=f(t,x)$}\label{section-3.1}

We start with the simplest case $f=f(t,x)$, namely, we consider the periodic boundary value problem
\begin{equation}\label{eq-fx}
\begin{cases}
\, ( \phi(x') )' = f(t,x),\\
\, x(0) = x(T), \quad x'(0) = x'(T),
\end{cases}
\end{equation}
where $\phi$ is an homeomorphism of the form \eqref{phi-A} (considered in the previous section) and $f = f(t,x)\colon \mathopen{[}0,T\mathclose{]} \times \mathbb{R}^{n}\to \mathbb{R}^{n}$ is a continuous function.

In our first result, we apply the concept of field of \textit{outer normals} at the boundary of a convex body. 
The use of outer normals in the theory of bound sets is classical and dates back to the Seventies (cf.~\cite{Be-74,GuSc-74}). We refer to \cite{MaSD-17,MaSD-19,MaSD-19rimut} for a recent renewal of considerations of this technique for boundary value problems of non-local type and also for the rich bibliography.

Recall that if $D\subseteq \mathbb{R}^{n}$ is a convex body (i.e.~the closure of an open bounded convex set of $\mathbb{R}^{n}$)
then for each $u\in \partial D$ there exists a \textit{normal cone} $\mathcal{N}_{u}$ such that for each $\nu\in \mathcal{N}_{u}\setminus\{0\}$ we have that
\begin{equation*}
D\subseteq \bigl{\{} x\in \mathbb{R}^{n} \colon \langle x-u,\nu\rangle \leq 0 \bigr{\}}.
\end{equation*}
A family of vectors $\mathcal{V}=(\nu_{u})_{u\in\partial D}$ is called a field of \textit{outer normals}
for $D$ if $\nu_{u}\in \mathcal{N}_{u}\setminus\{0\}$ for each $u\in \partial D$ (cf.~\cite{FoGi-16}).
We notice that if $0\in\mathrm{int}D$ then we also have $\langle u, \nu_{u} \rangle >0$ for all $u\in\partial D$ with $\nu_{u}\in \mathcal{N}_{u}\setminus\{0\}$.

The following result holds.

\begin{corollary}\label{cor-3.1}
Let $\phi \colon \mathbb{R}^{n} \to \phi(\mathbb{R}^{n})=\mathbb{R}^{n}$
be a homeomorphism of the form \eqref{phi-A} and $f = f(t,x)\colon \mathopen{[}0,T\mathclose{]} \times \mathbb{R}^{n}\to \mathbb{R}^{n}$ be a continuous function.
Suppose that there exists a field of outer normals
${\mathcal V}=(\nu_{u})_{u\in\partial G}$, for
an open convex bounded set $G\subseteq\mathbb{R}^{n}$ such that
\begin{equation*}
\langle f(t,u),\nu_{u}\rangle \geq 0, \quad \text{for all $t\in \mathopen{[}0,T\mathclose{]}$.}
\end{equation*}
Then, problem \eqref{eq-fx} has at least a solution with values in
$\overline{G}$.
\end{corollary}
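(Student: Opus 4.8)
The plan is to obtain the conclusion from Theorem~\ref{th-bs2}, choosing a homotopy that simultaneously fits the degree condition and forces the bound set condition through a family of \emph{affine} bounding functions. Fix a point $q\in G$ and set
\[
F(t,x,y;\lambda):=\lambda\,f(t,x)+(1-\lambda)(x-q),
\]
which is continuous, independent of $y$, and satisfies $F(t,x,y;1)=f(t,x)$ and $F(t,x,y;0)=f_0(x):=x-q$. The degree condition $(\textsc{h}_{\mathrm{D}})$ is then immediate: $f_0(\cdot,0)=\mathrm{id}-q$ has its only zero at $q\in G$, so $\mathrm{d}_{\mathrm{B}}(f_0(\cdot,0),G,0)=1\neq0$.

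For the bounding functions I would take the affine maps $V_u(x):=\langle x-u,\nu_u\rangle$, $u\in\partial G$, each of class $\mathcal{C}^2$ with $V_u(u)=0$. Since $G$ is convex with outer normal $\nu_u$ at $u$, the inclusion $\overline{G}\subseteq\{x\colon\langle x-u,\nu_u\rangle\le0\}$ holds, so $(V_u)_{u\in\partial G}$ is a set of bounding functions in the sense of Definition~\ref{def-bf}. Because $V_u$ is affine one has $V_u'(u)=\nu_u$ and $V_u''\equiv0$, whence for $\lambda\in[0,1[$ the left-hand side of $(\textsc{h}_{V})$ becomes
\[
\langle\nu_u,F(t,u,y;\lambda)\rangle=\lambda\langle\nu_u,f(t,u)\rangle+(1-\lambda)\langle\nu_u,u-q\rangle.
\]
The first term is $\ge0$ by hypothesis; the second is strictly positive, since $q$ is an interior point of the convex body $\overline{G}$ and therefore cannot lie on the supporting hyperplane $\{x\colon\langle x-u,\nu_u\rangle=0\}$, giving $\langle\nu_u,u-q\rangle>0$, while $1-\lambda>0$. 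Thus $(\textsc{h}_{V})$ holds, the strict sign being supplied precisely by the homotopy term $(1-\lambda)(x-q)$ even though the standing assumption $\langle f(t,u),\nu_u\rangle\ge0$ is only non-strict.

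The remaining and, I expect, most delicate point is the Nagumo condition $(\textsc{h}_{\mathrm{N}})$, because for a general $\phi$ the classical Hartman--Nagumo growth conditions may fail (cf.~Example~\ref{example-1}); here one must exploit the structural form $\phi(\xi)=A(\xi)\xi$ together with the absence of a $y$-dependence in $F$. Let $x$ solve $(P_{\lambda})$ with $x(t)\in\overline{G}$ for all $t$. As $\overline{G}$ is compact and $F$ is independent of $y$, the function $z:=\phi(x')$ is $\mathcal{C}^1$, $T$-periodic, and satisfies $\|z'\|_{\infty}=\|F(t,x(t);\lambda)\|\le C$ uniformly in $\lambda$ and in the solution. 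I would apply Lemma~\ref{lem-2.1} to the family of such $z$, taking $p=\infty$ so that $\|z'\|_{L^{\infty}}\le C$ gives the second hypothesis. For the first hypothesis, given $\omega\in\mathbb{S}^{n-1}$, Rolle's theorem applied to the $T$-periodic map $t\mapsto\langle x(t),\omega\rangle$ yields $t_0$ with $\langle x'(t_0),\omega\rangle=0$, and then $\langle z(t_0),\omega\rangle=A(x'(t_0))\langle x'(t_0),\omega\rangle=0$ (also when $x'(t_0)=0$, since $\phi(0)=0$); hence $M_0=0$ works. Lemma~\ref{lem-2.1} then bounds $\|\phi(x')\|_{\infty}$ by a constant independent of the solution, and continuity of $\phi^{-1}$ converts this into a uniform bound $\|x'\|_{\infty}\le K$, so $(\textsc{h}_{\mathrm{N}})$ holds. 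With $(\textsc{h}_{\mathrm{N}})$, $(\textsc{h}_{V})$ and $(\textsc{h}_{\mathrm{D}})$ all in force, Theorem~\ref{th-bs2} provides the desired $T$-periodic solution with values in $\overline{G}$.
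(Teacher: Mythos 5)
Your proposal is correct and follows essentially the same route as the paper's own proof: the same convex homotopy $\lambda f(t,x)+(1-\lambda)(x-P)$, the same affine bounding functions $V_u(x)=\langle x-u,\nu_u\rangle$ with the strict sign in $(\textsc{h}_{V})$ coming from $\langle u-P,\nu_u\rangle>0$, and the same verification of $(\textsc{h}_{\mathrm{N}})$ via Rolle's theorem and Lemma~\ref{lem-2.1} applied to $z=\phi(x')$ with $M_0=0$ and $p=\infty$, before invoking Theorem~\ref{th-bs2}. No gaps to report.
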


\begin{proof}
We fix a point $P\in G$ and observe that from basic properties of convex sets
(cf.~\cite[p.~107]{FeZa-88} and \cite{La-82})
we have $\langle u-P,\nu_{u}\rangle > 0$
for all $u\in \partial G$. Thus, we define
\begin{equation*}
F(t,x,y;\lambda):= \lambda f(t,x) + (1-\lambda)(x-P),
\end{equation*}
so that $F(t,x,y;1)=f(t,x)$ and $F(t,x,y;0)=f_{0}(x)= x-P$.
In this manner, $(\textsc{h}_{\mathrm{D}})$ is clearly satisfied as
\begin{equation*}
\mathrm{d}_{\mathrm{B}}(\mathrm{Id}_{\mathbb{R}^{n}}-P,G,0) = \mathrm{d}_{\mathrm{B}}(\mathrm{Id}_{\mathbb{R}^{n}},G,P)=1.
\end{equation*}
To prove condition $(\textsc{h}_{\mathrm{V}})$ we define $V_{u}(x):= \langle x-u,\nu_{u}\rangle$ and observe that
$V'_{u}(u) = \nu_{u}$ and $V''_{u}(u)=0$. Hence
\begin{equation*}
\langle V_{u}(u),F(t,u,y;\lambda)\rangle = 
\lambda \langle f(t,u),\nu_{u}\rangle + (1-\lambda)\langle u-P,\nu_{u}\rangle
\geq (1-\lambda)\langle u-P,\nu_{u}\rangle >0,
\end{equation*}
for every $\lambda\in \mathopen{[}0,1\mathclose{[}$, $t\in \mathopen{[}0,T\mathclose{]}$ and $u\in \partial G$; then $(\textsc{h}_{V})$
holds.

To conclude the proof we have still to check $(\textsc{h}_{N})$, that is
\eqref{eq-fx} is a Nagumo equation with respect to $G$. To this
end, let $\lambda\in\mathopen{[}0,1\mathclose{[}$ and $x(\cdot)$ be a
solution of $(P_{\lambda})$ with values in the bounded set $G$.
Then, $x(\cdot)$ is uniformly bounded and consequently
$f(\cdot,x(\cdot))$ is bounded as well. Hence, there are two
positive constants $R_{1}, R_{2}$ such that every solution of
$(P_{\lambda})$ satisfies
\begin{equation*}
\|x\|_{\infty}\leq R_{1},\quad \|(\phi(x'))'\|_{\infty}\leq R_{2}.
\end{equation*}
Let $\omega\in \mathbb{S}^{n-1}$, and consider
the auxiliary function $\psi_{x,\omega} (t):=\langle x(t),\omega\rangle$, $t\in\mathopen{[}0,T\mathclose{]}$. By Rolle's theorem there exists $t_0=t_0(x,\omega)\in
\mathopen{[}0,T\mathclose{]}$ such that $\psi_{x,\omega}'(t_0)=\langle x'(t_0),\omega\rangle = 0$. 
Therefore, for $z(t):=\phi(x'(t))=A(x'(t)) x'(t)$ the conditions in Lemma~\ref{lem-2.1} are satisfied with $M_{0}:=0$, $p=\infty$, $M_{1}:=R_{2}$. We conclude hat there exists a constant $K:= T R_{2}$ such that
\begin{equation*}
\|\phi(x'(t))\| \leq K, \quad \text{for every $t\in \mathopen{[}0,T\mathclose{]}$,}
\end{equation*}
holds for every possible solution $x(\cdot)$ of $(P_{\lambda})$.
As a consequence of the above inequality, we get that
$\phi(x'(\cdot))$, and so $x'(\cdot)$, is uniformly bounded. This proves $(\textsc{h}_{N}).$
Finally, we conclude with an application of Theorem~\ref{th-bs2}.
\end{proof}

In the next result the domain $G$ is the sublevel set of a functional. Preliminarily,
we make some remarks.

For given function $V \colon \mathbb{R}^{n}\to \mathbb{R}$ and $c\in\mathbb{R}$, we denote by $[V\leq c]$ the sublevel set $\{x\in\mathbb{R}^{n} \colon V(x)\leq c\}$ and similarly $[V=c]:=\{x\in\mathbb{R}^{n} \colon V(x)=c\}$ and $[V<c]:=\{x\in\mathbb{R}^{n} \colon V(x)<c\}$.
Assume that $V$ is of class $\mathcal{C}^{1}$ and that, for some $c\in\mathbb{R}$, the set $D:=[V\leq c]$ is nonempty and, moreover,
\begin{equation}\label{cond-V'}
V'(u)\neq 0, \quad \text{for all $u\in[V=c]$.}
\end{equation}
In this case, it turns out that $D$ is a regularly closed set, namely for $G:=\mathrm{int}D = D\setminus\partial D$ we have $\overline{G}=D$
and, moreover, $\partial G = \partial D = [V=c]$ as well as $G=[V<c]$.

To prove this claim, we observe that
in any case, we have
$[V<c]\subseteq G,$
$\overline{G}\subseteq D$
and $\partial G\subseteq \partial D \subseteq [V=c]$.
On the other hand, if for every $u\in \partial D$ we consider the
scalar function $w(\vartheta):= V(u+\vartheta V'(u)),$
we find that $w(0)=c$ and $w'(0) >0,$ so that in any neighborhood of $u$
there are points
of $G,$ from which we obtain that $\partial D \subseteq \overline{G}$.
Finally, from $\partial G=\partial D$ we also conclude that
$\overline{G} = G \cup \partial G= \mathrm{int}D\cup \partial D = D$
and thus the claim is proved.

Generally speaking, the condition $V'(u)\neq 0$, for all $u\in\partial D$, is not enough to guarantee the same property, as shown by the example $V\colon\mathbb{R}\to\mathbb{R}$, $V(x):=x^{4}-x^{2}$, and $c=0$. In this case, $\partial D = \{-1,1\} \subsetneq \{-1,0,1\}=[V=0]$.

The next lemma, which is borrowed and adapted from \cite{KrPa-99} (see also the survey \cite{Kr-1011}), is useful in this context as it allows to study the case of sublevel domains with a minimal set of assumptions.

\begin{lemma}\label{lem-convex}
Let $V \colon \mathbb{R}^{n}\to \mathbb{R}$ be a $\mathcal{C}^{2}$-function satisfying \eqref{cond-V'} for some $c\in\mathbb{R}$ with $D:=[V\leq c]$ nonempty, bounded and connected. Then, $D$ (as well as $\mathrm{int}D$) is convex if and only if 
\begin{itemize}[leftmargin=28pt,labelsep=10pt]
\item[$(C)$] for all $u\in\partial D$, $\langle V''(u)y,y\rangle \geq 0$,
for all $y\in \mathbb{R}^{n}$ with $\langle V'(u),y\rangle = 0$.
\end{itemize}
\end{lemma}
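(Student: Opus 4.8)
The plan is to reduce the global convexity statement to a local one by exploiting that, thanks to \eqref{cond-V'}, $\partial D = [V=c]$ is a regular $\mathcal{C}^2$ hypersurface, and then to read condition $(C)$ as the sign of its second fundamental form. Fix $u\in\partial D$; after translating $u$ to $0$ and rotating so that $V'(u) = \|V'(u)\|\,e_n$, the implicit function theorem provides a $\mathcal{C}^2$ function $\psi$ on a ball of the tangent hyperplane $\{x_n=0\}=\ker V'(u)$ with $\psi(0)=0$ and $\nabla\psi(0)=0$, such that locally $[V=c]$ is the graph $x_n=\psi(x')$ and $D=[V\le c]$ is the hypograph $\{x_n\le\psi(x')\}$ (the orientation being forced by $\partial_{x_n}V(u)>0$). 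Differentiating $V(x',\psi(x'))=c$ twice and evaluating at a boundary point yields the pointwise identity $D^2\psi = -\|V'\|^{-1}\,V''|_{\ker V'}$ relating the graph Hessian to the restriction of $V''$, i.e.\ the second fundamental form with respect to the outer normal $V'/\|V'\|$ equals $-\|V'\|^{-1}V''|_{\ker V'}$. Hence $(C)$ holds at that point if and only if $D^2\psi\le 0$ there, that is $\psi$ is concave. The key observation is that $(C)$ is assumed at \emph{every} boundary point, hence at all boundary points of the neighbourhood, so $D^2\psi\le 0$ on the whole domain of $\psi$; consequently $\psi$ is concave and $D$ agrees near $u$ with the convex hypograph intersected with a small ball. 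In short, $(C)$ (holding on all of $\partial D$) is equivalent to $D$ being \emph{locally convex} at each of its points.

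For necessity, assume $D$ is convex. Then $D$ is trivially locally convex, so the local graph $\psi$ above is concave ($D^2\psi\le 0$), and the identity of the previous paragraph gives $(C)$ at $u$; as $u\in\partial D$ was arbitrary, $(C)$ holds. Equivalently one may argue directly: for $y\in\ker V'(u)$ take a $\mathcal{C}^2$ curve $\gamma$ on $\partial D$ with $\gamma(0)=u$, $\gamma'(0)=y$; since $V'(u)$ is an outer normal, $s\mapsto\langle V'(u),\gamma(s)-u\rangle\le 0$ attains its maximum $0$ at $s=0$, whence $\langle V'(u),\gamma''(0)\rangle\le 0$, while differentiating $V(\gamma(s))=c$ twice gives $\langle V''(u)y,y\rangle=-\langle V'(u),\gamma''(0)\rangle\ge 0$.

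For sufficiency, assume $(C)$. By the first paragraph $D$ is locally convex at every point: at interior points this is clear, and at boundary points it follows from the concavity of the local graph function. Now $D=[V\le c]$ is closed (as $V$ is continuous), bounded, and connected by hypothesis. The passage from local to global convexity is the heart of the matter and is exactly where connectedness is indispensable — the union of two disjoint balls is locally convex but not convex. Here I would invoke the Tietze--Nakajima theorem: a closed, connected, locally convex subset of $\mathbb{R}^n$ is convex. This yields that $D$ is convex, and since the interior of a convex set is convex (and $\mathrm{int}\,D=[V<c]$ by the remarks preceding the statement), $\mathrm{int}\,D$ is convex as well.

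The main obstacle is precisely this last global step. The information supplied by $(C)$ is genuinely second-order and pointwise, and on its own it cannot distinguish $D$ from a disjoint union of convex pieces; the topological hypothesis of connectedness must be fed in through a Tietze-type result (this is the content of the Krantz--Parks analysis, for which the paper proposes an alternative argument in Appendix~\ref{appendix-A}). A secondary technical point to handle with care is the orientation and the sign bookkeeping in the identity between $D^2\psi$ and $V''|_{\ker V'}$, together with the verification that the graph representation of $D$ is valid on a full neighbourhood, so that the pointwise nonnegativity in $(C)$ genuinely propagates to concavity of $\psi$ on an open set rather than only at the base point.
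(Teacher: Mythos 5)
Your proposal is correct and follows essentially the same route as the paper's Appendix~\ref{appendix-A} proof: an implicit-function-theorem graph representation of $\partial D$ near each boundary point, a second-derivative identity showing that $(C)$ at all nearby boundary points forces concavity of the graph function (the paper computes this along two-dimensional slices $\beta\mapsto\varphi(\beta v)$, you via the full Hessian $D^2\psi$), and the Tietze--Nakajima theorem to pass from local to global convexity using connectedness, with necessity obtained from the same identity. The only slip is cosmetic: away from the base point the normalizing factor in your identity is $\partial_{x_n}V(u)=\langle V'(u),e_n\rangle$ rather than $\|V'(u)\|$, and the horizontal directions must be identified with $\ker V'(u)$ via $\eta\mapsto\bigl(\eta,\langle\nabla\psi(x'),\eta\rangle\bigr)$ --- neither point affects the sign argument, since $\partial_{x_n}V>0$ on the (shrunken) neighbourhood.
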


Lemma~\ref{lem-convex} can be proved by combining the results in \cite[ch.~6, pp.~195--196]{KrPa-99} or \cite{Kr-1011}.
In Appendix~\ref{appendix-A} we provide a sketch of the proof.
Clearly it is not restrictive to consider the case $c=0$, as done in the sequel.

The following result provides a variant of Corollary~\ref{cor-3.1} for convex sets with smooth boundary, observing that the involved domain is a sublevel set (see \cite{Be-74,Ma-74} for classical results in this direction for the linear differential operator).

\begin{corollary}\label{cor-V}
Let $\phi \colon \mathbb{R}^{n} \to \phi(\mathbb{R}^{n})=\mathbb{R}^{n}$
be a homeomorphism of the form \eqref{phi-A} and $f = f(t,x)\colon \mathopen{[}0,T\mathclose{]} \times \mathbb{R}^{n}\to \mathbb{R}^{n}$ be a continuous function.
Let $V \colon \mathbb{R}^{n}\to \mathbb{R}$ be a $\mathcal{C}^{2}$-function such that $V'(u)\neq 0$ for all $u\in[V=0]$.
Let $D:=[V\leq0]$ be nonempty and bounded.
Suppose that for every $t\in \mathopen{[}0,T\mathclose{]}$ and $u\in \partial D$
\begin{itemize}[leftmargin=28pt,labelsep=10pt]
\item $\langle V'(u),f(t,u)\rangle \geq 0$;
\item $\langle V''(u)y,y\rangle \geq 0$,
for all $y\in \mathbb{R}^{n}$ with $\langle V'(u),y\rangle = 0$.
\end{itemize}
Then, problem \eqref{eq-fx} has at least a solution with values in $D$.
\end{corollary}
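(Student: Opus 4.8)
The plan is to reduce the corollary to Corollary~\ref{cor-3.1} by exhibiting $D$ (or a connected component of it) as a convex body whose boundary carries the field of outer normals $\nu_u:=V'(u)$. I would begin from the geometric observations made just before the statement: since $V\in\mathcal{C}^{2}$ and $V'(u)\neq0$ on $[V=0]$, the value $0$ is regular, so $D=[V\leq0]$ is regularly closed (as recalled there) and is in fact a compact $\mathcal{C}^{2}$-manifold with boundary $\partial D=[V=0]$. Writing $G:=\mathrm{int}D$, one then has $G=[V<0]$, $\overline{G}=D$ and $\partial G=\partial D=[V=0]$; in particular $G$ is open, bounded and nonempty, with $\mathcal{C}^{2}$ boundary.

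Next I would invoke Lemma~\ref{lem-convex} to obtain convexity. Its condition $(C)$ on $\partial D$ is exactly the second hypothesis of the corollary. If $D$ is connected, Lemma~\ref{lem-convex} applies at once and yields that $D$ is convex. If $D$ is disconnected, it has finitely many pairwise disjoint compact components, at mutual positive distance; I would fix one component $D_{0}$ and, by a cut-off replacing $V$ with a $\mathcal{C}^{2}$ function that coincides with $V$ near $\partial D_{0}$ and has $D_{0}$ as its entire nonpositive sublevel set, apply Lemma~\ref{lem-convex} to conclude that $D_{0}$ is convex. Thus, possibly after replacing $D$ by one of its components, I may assume that $D$ is a convex body and $G=\mathrm{int}D$ is open, bounded and convex.

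Then I would verify that $\nu_u:=V'(u)$ defines a field of outer normals. For $u\in\partial D=[V=0]$ the gradient $V'(u)$ is nonzero and normal to the level hypersurface, and it points in the direction of increasing $V$, hence outward since $V<0$ on $\mathrm{int}D$; by convexity the tangent hyperplane supports $D$ at $u$, i.e.\ $\langle x-u,V'(u)\rangle\leq0$ for every $x\in D$, so that $\nu_u\in\mathcal{N}_u\setminus\{0\}$ and $(\nu_u)_{u\in\partial D}$ is a field of outer normals for the convex body $D$. The first hypothesis of the corollary then reads precisely $\langle f(t,u),\nu_u\rangle=\langle V'(u),f(t,u)\rangle\geq0$ for all $t\in\mathopen{[}0,T\mathclose{]}$ and $u\in\partial G$. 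An application of Corollary~\ref{cor-3.1} to $\phi$, $f$ and the open convex bounded set $G$ then produces a solution of \eqref{eq-fx} with values in $\overline{G}=D$, as required.

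The delicate points are geometric rather than analytic. First, since the two hypotheses do not force global convexity when $D$ is disconnected (cf.\ the two-ball example of the Introduction), the reduction to a single component, together with the verification that Lemma~\ref{lem-convex} may be applied there, is the step requiring most care. Second, one must justify that $V'(u)$ is genuinely an outer normal, which rests on the smoothness of $\partial D$ — guaranteed by $V'(u)\neq0$ — combined with the convexity just established. Once these are settled, all the analytic work, namely the Nagumo a priori bound on $\|x'\|_{\infty}$ and the nonvanishing of the Brouwer degree, is already packaged inside Corollary~\ref{cor-3.1}.
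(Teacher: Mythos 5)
Your proof is correct, but it follows a genuinely different route from the paper. The paper does not pass through Corollary~\ref{cor-3.1} at all: it applies Theorem~\ref{th-bs2} directly to a connected component $G_{0}$ of $G=[V<0]$, taking the single function $V_{u}\equiv V$ as bounding function, the homotopy $F(t,x,y;\lambda)=\lambda f(t,x)+(1-\lambda)V'(x)$, and verifying $(\textsc{h}_{V})$ by the identity $\langle V''(u)y,\phi(y)\rangle=A(y)\langle V''(u)y,y\rangle\geq 0$ for $\langle V'(u),y\rangle=0$; there the Hessian hypothesis enters $(\textsc{h}_{V})$ directly, and Lemma~\ref{lem-convex} is used only to compute the degree $\mathrm{d}_{\mathrm{B}}(V',G_{0},0)=1$, via the inequality $\langle V'(u),u-P_{0}\rangle\geq 0$ (proved by a one-line segment argument using convexity) and the admissible homotopy $(1-\lambda)V'(x)+\lambda(x-P_{0})$. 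In your reduction, by contrast, the Hessian hypothesis is consumed entirely by Lemma~\ref{lem-convex}, and the convexity it yields is then upgraded to the support property $\langle x-u,V'(u)\rangle\leq 0$ so that $\nu_{u}:=V'(u)$ is a field of outer normals, after which Corollary~\ref{cor-3.1} packages the Nagumo bound, the linear bounding functions and the degree computation for you. Both arguments rest on the same key ingredient (the Krantz--Parks characterisation), and both must localise to a connected component: the paper does this tersely, applying Lemma~\ref{lem-convex} to $G_{0}$ even though the lemma is stated for $D$ connected (implicitly, condition $(C)$ gives local convexity, and Tietze--Nakajima then applies componentwise), whereas your cut-off modification of $V$ making $D_{0}$ the entire sublevel set is a more explicit justification of the same point -- arguably more careful than the paper itself. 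Two small details you assert without proof but which do hold and deserve a line: $D$ has finitely many components at mutual positive distance because $[V=0]$ is a compact $(n-1)$-manifold with finitely many components and every component of $D$ contains one of them (in fact, for your cut-off you only need $\mathrm{dist}(D_{0},D\setminus D_{0})>0$); and $\mathrm{int}\,D_{0}\neq\emptyset$, which follows from the regular closedness of $D$ recalled before Lemma~\ref{lem-convex}. What each approach buys: yours achieves maximal reuse of Corollary~\ref{cor-3.1} at the cost of the geometric work on normals and the cut-off; the paper's avoids the global support property (only $\langle V'(u),u-P_{0}\rangle\geq 0$ is needed) and stays within a single application of Theorem~\ref{th-bs2}.
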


\begin{proof}
We define $G:={\rm int}D = D\setminus\partial D$ and therefore $G=[V<0]$.
By the preliminary observation we also have
$\overline{G}= D$
and $\partial G=\partial D = [V=0]$.
Let $G_{0}$ be a connected component of $G$. Setting $V_{u}=V$ for all $u\in\partial G_{0}$,
we have that $(V_{u})_{u\in\partial G_{0}}$ is a family of bounding functions for $G_{0}$.
Next, we consider the homotopy
\begin{equation*}
F(t,x,y;\lambda) := \lambda f(t,x) + (1-\lambda) V'(x)
\end{equation*}
and observe that
\begin{align*}
&\langle V''(u)y,\phi(y)\rangle + \langle V'(u),F(t,u,y;\lambda)\rangle =
\\&= A(y) \langle V''(u)y,y\rangle
+ \lambda \langle V'(u),f(t,u)\rangle
+ (1-\lambda) \|V'(u)\| >0,
\end{align*}
for every $\lambda\in \mathopen{[}0,1\mathclose{[}$, $t\in \mathopen{[}0,T\mathclose{]}$, $u\in \partial G_{0}$, and $y\in \mathbb{R}^{n}$ such that $\langle V'(u),y\rangle = 0$.
Therefore, condition $(\textsc{h}_{V})$ holds.
Next, from Lemma~\ref{lem-convex} we observe that $G_{0}$ is convex and therefore
\begin{equation}\label{deg-1}
\mathrm{d}_{\mathrm{B}}(V',G_{0},0) = 1.
\end{equation}
To prove the above formula, let us fix a point $P_{0}\in G_{0}$. We claim that 
\begin{equation}\label{grad-P0}
\langle V'(u),u-P_{0} \rangle \geq 0, \quad \text{for all $u\in\partial G_{0}$.}
\end{equation}
Indeed, setting $v(\vartheta):= V(P_{0}+\vartheta(u-P_{0}))$ for $\vartheta\in\mathopen{[}0,1\mathclose{]}$, we have that $v(1)=0$ and $v(\vartheta)<0$ for all $\vartheta\in\mathopen{[}0,1\mathclose{[}$. Hence, $\langle V'(u),u-P_{0} \rangle = v'(1) \geq 0$. From \eqref{grad-P0}, via the convex homotopy $(1-\lambda) V'(x) + \lambda (x-P_{0})$ we find that
\begin{equation*}
\mathrm{d}_{\mathrm{B}}(V',G_{0},0) = \mathrm{d}_{\mathrm{B}}(\mathrm{Id}_{\mathbb{R}^{n}}-P_{0},G_{0},0)=\mathrm{d}_{\mathrm{B}}(\mathrm{Id}_{\mathbb{R}^{n}},G_{0},P_{0})=1.
\end{equation*}
An alternative manner to prove \eqref{deg-1} is to apply \cite[Theorem~3]{FoGi-16} to the vector field $-V'(x)$ on the convex set $G_{0}$.

Finally, we conclude with an application of Theorem~\ref{th-bs2} which implies the existence of a solution in $\overline{G_{0}}$ and hence in $D$.
\end{proof}

We can reestablish the following version of the Hartman--Knobloch theorem (see \cite{FeZa-17,Ma-00}).

\begin{theorem}[Hartman--Knobloch]\label{th-HK}
Let $\phi \colon \mathbb{R}^{n} \to \phi(\mathbb{R}^{n})=\mathbb{R}^{n}$
be a homeomorphism of the form \eqref{phi-A} and $f = f(t,x)\colon \mathopen{[}0,T\mathclose{]} \times \mathbb{R}^{n}\to \mathbb{R}^{n}$ be a continuous function. If there exists $R > 0$ such that Hartman's condition
\begin{equation*}
\langle f(t,x),x\rangle \geq 0, \quad \text{for every $t\in\mathopen{[}0,T\mathclose{]}$ and $x\in\mathbb{R}^{n}$ with $\|x\|= R$,}
\end{equation*}
holds, then there exists a solution $x$ of \eqref{eq-fx} such that $\|x(t)\|\leq R$ for all $t\in\mathopen{[}0,T\mathclose{]}$.
\end{theorem}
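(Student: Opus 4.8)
The plan is to specialise Corollary~\ref{cor-V} to the ball $B[0,R]$, which is exactly the sublevel set of the quadratic bounding function attached to the Euclidean norm. Concretely, I would set
\begin{equation*}
V(x) := \tfrac{1}{2}\bigl(\|x\|^{2} - R^{2}\bigr), \qquad x\in\mathbb{R}^{n},
\end{equation*}
so that $V$ is of class $\mathcal{C}^{\infty}$, the set $D := [V\leq 0] = B[0,R]$ is nonempty and bounded, and $[V=0] = \{x\in\mathbb{R}^{n}\colon \|x\|=R\}$. Since $V'(x) = x$ and $V''(x) = \mathrm{Id}_{\mathbb{R}^{n}}$, the whole argument reduces to reading off Hartman's hypothesis as the first inequality required by Corollary~\ref{cor-V} and to observing that the second (Hessian) inequality is automatically satisfied.

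First I would verify the nonvanishing-gradient requirement: for $u\in[V=0]$ we have $\|u\|=R>0$, hence $V'(u)=u\neq 0$, so \eqref{cond-V'} holds and the preliminary remarks apply, giving $\partial D = [V=0]$ and $\mathrm{int}\,D = [V<0] = B(0,R)$. Next, the first bullet of Corollary~\ref{cor-V} reads $\langle V'(u),f(t,u)\rangle = \langle u,f(t,u)\rangle \geq 0$ for every $t\in\mathopen{[}0,T\mathclose{]}$ and every $u$ with $\|u\|=R$, which is precisely the assumed Hartman condition. For the second bullet, since $V''(u) = \mathrm{Id}_{\mathbb{R}^{n}}$ we have $\langle V''(u)y,y\rangle = \|y\|^{2}\geq 0$ for \emph{every} $y\in\mathbb{R}^{n}$, and a fortiori for those $y$ with $\langle V'(u),y\rangle = 0$, so this condition holds trivially.

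With both hypotheses verified, an application of Corollary~\ref{cor-V} produces a solution $x$ of \eqref{eq-fx} with values in $D = B[0,R]$, that is $\|x(t)\|\leq R$ for all $t\in\mathopen{[}0,T\mathclose{]}$, which is the assertion. There is essentially no obstacle here, as the statement is an immediate specialisation; the only point worth flagging is that for the ball the convexity of $D$ is manifest, so that neither Lemma~\ref{lem-convex} nor the semidefiniteness analysis is genuinely invoked — the degree normalisation \eqref{deg-1} reduces to $\mathrm{d}_{\mathrm{B}}(\mathrm{Id}_{\mathbb{R}^{n}},B(0,R),0)=1$ because $0\in B(0,R)$. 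An equally direct route would bypass the bounding function and instead invoke Corollary~\ref{cor-3.1} with the field of outer normals $\nu_{u} := u$ for the convex body $B[0,R]$, since then $\langle f(t,u),\nu_{u}\rangle = \langle f(t,u),u\rangle \geq 0$ is again exactly Hartman's condition.
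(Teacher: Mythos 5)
Your proposal is correct and coincides with the paper's own argument: the paper gives exactly these two proofs, one via Corollary~\ref{cor-V} with $V(x)=(\|x\|^{2}-R^{2})/2$ (so $V'(x)=x$, $V''(x)=\mathrm{Id}_{\mathbb{R}^{n}}$) and one via Corollary~\ref{cor-3.1} with the outer normal field $\nu_{u}=u$ on the ball. Your verification of the hypotheses, including the remark that the Hessian condition and the convexity are automatic for the ball, matches the paper's (more terse) treatment.
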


\begin{proof}
We can propose two different proofs. The first one is based on Corollary~\ref{cor-3.1}. We consider the set $G=\{x\in\mathbb{R}^{n}\colon \|x\|<R\}$ and observe that $\nu_{x}=x$ for all $x\in\partial G$. Therefore, Hartman's condition corresponds to the assumption in Corollary~\ref{cor-3.1} and thus Theorem~\ref{th-HK} follows.

The second proof is based on Corollary~\ref{cor-V}. We deal with the $\mathcal{C}^{\infty}$-function $V(x)=(\|x\|^{2}-R^{2})/2$ and notice that $V'(x)=x$ and $V''$ is the identity matrix.
The conclusion is reached as a direct application of Corollary~\ref{cor-V}.
\end{proof}

We can further state another straightforward consequence of Corollary~\ref{cor-3.1}.

\begin{theorem}[Poincar\'{e}--Miranda]\label{th-PM}
Let $\phi \colon \mathbb{R}^{n} \to \phi(\mathbb{R}^{n})=\mathbb{R}^{n}$
be a homeomorphism of the form \eqref{phi-A} and let $f = f(t,x)\colon \mathopen{[}0,T\mathclose{]} \times \mathbb{R}^{n}\to \mathbb{R}^{n}$ be a continuous function. Assume that there exists a $n$-dimensional rectangle $\mathcal{R}:=\prod_{i=1}^{n} \mathopen{[}a_{i},b_{i}\mathclose{]}$ such that for each $i\in\{1,\ldots,n\}$ it holds that
\begin{equation}\label{cond-PM}
\begin{cases}
\, f_{i}(t,x)\leq 0, & \text{for all $x\in\partial\mathcal{R}$ with $x_{i}=a_{i}$,}
\\
\, f_{i}(t,x)\geq 0, & \text{for all $x\in\partial\mathcal{R}$ with $x_{i}=b_{i}$.}
\end{cases}
\end{equation}
Then, there exists a solution $x$ of \eqref{eq-fx} with $x(t)\in\mathcal{R}$ for all $t\in\mathopen{[}0,T\mathclose{]}$.
\end{theorem}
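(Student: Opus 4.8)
The plan is to derive this directly from Corollary~\ref{cor-3.1} by taking $G$ to be the open rectangle $\mathrm{int}\,\mathcal{R} = \prod_{i=1}^{n}\mathopen{]}a_{i},b_{i}\mathclose{[}$ (tacitly assuming $a_{i}<b_{i}$ for each $i$, so that $\mathcal{R}$ is nondegenerate). This set is open, bounded and convex, with $\overline{G}=\mathcal{R}$ and $\partial G=\partial\mathcal{R}$; hence it is a convex body in the sense required by the corollary. It therefore suffices to exhibit a field of outer normals $\mathcal{V}=(\nu_{u})_{u\in\partial\mathcal{R}}$ satisfying $\langle f(t,u),\nu_{u}\rangle\geq 0$ for all $t\in\mathopen{[}0,T\mathclose{]}$, since the Nagumo condition and the degree condition are then handled internally by the proof of Corollary~\ref{cor-3.1} exactly as for a general bounded convex $G$.

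The key step is the explicit construction of the field. For $u\in\partial\mathcal{R}$, I would set $J_{a}(u):=\{i \colon u_{i}=a_{i}\}$ and $J_{b}(u):=\{i \colon u_{i}=b_{i}\}$; since $u$ lies on the boundary, $J_{a}(u)\cup J_{b}(u)\neq\emptyset$. Writing $e_{1},\dots,e_{n}$ for the standard basis of $\mathbb{R}^{n}$, the normal cone $\mathcal{N}_{u}$ is generated by $-e_{i}$ for $i\in J_{a}(u)$ and by $e_{i}$ for $i\in J_{b}(u)$. Recalling that the corollary only requires one nonzero element of $\mathcal{N}_{u}$ at each boundary point, with no continuity demanded, I would choose
\begin{equation*}
\nu_{u} := \sum_{i\in J_{b}(u)} e_{i} - \sum_{i\in J_{a}(u)} e_{i},
\end{equation*}
which is nonzero and lies in $\mathcal{N}_{u}$, as one checks from $\langle x-u,\nu_{u}\rangle=\sum_{i\in J_{b}(u)}(x_{i}-b_{i})+\sum_{i\in J_{a}(u)}(a_{i}-x_{i})\leq 0$ for every $x\in\mathcal{R}$.

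It then remains only to verify the sign condition. Computing the inner product,
\begin{equation*}
\langle f(t,u),\nu_{u}\rangle = \sum_{i\in J_{b}(u)} f_{i}(t,u) - \sum_{i\in J_{a}(u)} f_{i}(t,u),
\end{equation*}
and each summand is nonnegative: if $i\in J_{b}(u)$ then $u_{i}=b_{i}$, so \eqref{cond-PM} gives $f_{i}(t,u)\geq 0$, while if $i\in J_{a}(u)$ then $u_{i}=a_{i}$, so $f_{i}(t,u)\leq 0$ and hence $-f_{i}(t,u)\geq 0$. Thus $\langle f(t,u),\nu_{u}\rangle\geq 0$ for every $t\in\mathopen{[}0,T\mathclose{]}$ and $u\in\partial\mathcal{R}$, and Corollary~\ref{cor-3.1} yields a $T$-periodic solution with values in $\overline{G}=\mathcal{R}$.

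The only mildly delicate point is the treatment of the edges and corners of $\mathcal{R}$, where $u$ belongs to several faces at once and $\mathcal{N}_{u}$ is multidimensional. Since the corollary permits an arbitrary (not necessarily continuous) selection from the normal cone, the summed normal $\nu_{u}$ above dispatches these points uniformly, and the crucial observation is that the hypotheses \eqref{cond-PM} are assumed on the whole of $\partial\mathcal{R}$---not merely on the relative interiors of the faces---so that the sign of each component $f_{i}$ is controlled precisely at those $u$ with $u_{i}\in\{a_{i},b_{i}\}$, which is exactly what the edge and corner case requires.
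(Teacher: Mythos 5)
Your proof is correct and takes exactly the route the paper intends: Theorem~\ref{th-PM} is presented there without a written proof, as a ``straightforward consequence of Corollary~\ref{cor-3.1}'', and your argument simply fills in the details of that deduction. Your explicit choice $\nu_{u}=\sum_{i\in J_{b}(u)}e_{i}-\sum_{i\in J_{a}(u)}e_{i}$, valid uniformly on faces, edges and corners because \eqref{cond-PM} is assumed on all of $\partial\mathcal{R}$, together with the sign verification $\langle f(t,u),\nu_{u}\rangle\geq 0$, is precisely the outer-normal field the corollary requires (modulo the tacit nondegeneracy $a_{i}<b_{i}$, which you rightly flag).
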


As is well known, the inequalities in \eqref{cond-PM} can be reversed for first order systems; on the other hand, for second-order systems (as in our case) reversing  the inequalities is not possible unless further growth (non-resonance) assumptions on the vector field are imposed, as one can see from the trivial example $x''=-x+\varepsilon \sin(t)$ (see, for instance, \cite{Ma-72} for a result in this direction).

It is worth noticing that Theorem~\ref{th-PM} is related to the theory of (well ordered) lower and upper solutions for a differential system in $\mathbb{R}^{n}$,
as it represents the case of vector valued lower solution $\alpha$ and vector valued upper solution $\beta$ with $\alpha_{i}\equiv a_{i}$ and $\beta_{i}=b_{i}$.
In order to deal with non-constant lower and upper solutions in $\mathbb{R}^{n}$, one could adapt to our case the technique of ``curvature bound sets'' in \cite{GaMa-77jde,GaMa-77} or ``non autonomous bounding Lyapunov functions'' in \cite{Be-74}; see also \cite{HaSc-83} for a similar approach.

\subsection{A $\phi$-Laplacian Rayleigh-type equation}\label{section-3.2}

Up to now we have considered only a trivial case of the Nagumo--Hartman condition, namely when the vector field does not depend on $x'$. In the next example, inspired by a case studied in \cite{Ma-74}, we treat a more general situation.

Let us consider the following $T$-periodic boundary value problem associated with a $\phi$-Laplacian Rayleigh-type equation
\begin{equation}\label{eq-R}
\begin{cases}
\, ( \phi(x') )' = g(x') + h(t,x),\\
\, x(0) = x(T), \quad x'(0) = x'(T),
\end{cases}
\end{equation}
which is a natural generalisation of the system $x'' = g(x') + h(t,x)$ considered in \cite{Ma-74}.
In this framework, we state the following result, which is in the spirit of Corollary~\ref{cor-V}. It can be seen as an extension of \cite[Corollary~6.3]{Ma-74} (for $V(x)=(\|x\|^{2}-R^{2})/2$).

\begin{theorem}\label{th-Rayleigh}
Let $\phi \colon \mathbb{R}^{n} \to \phi(\mathbb{R}^{n})=\mathbb{R}^{n}$ be a homeomorphism of the form \eqref{phi-A} and let
$h = h(t,x)\colon \mathopen{[}0,T\mathclose{]} \times \mathbb{R}^{n}\to \mathbb{R}^{n}$ be a continuous function.
Let $g\colon \mathbb{R}^{n}\to \mathbb{R}^{n}$ be continuous and such that $g(y)$ has either the same or the opposite direction of $y$.
Assume also that there exists $\mathcal{G}\colon\mathbb{R}^{n}\to \mathbb{R}$ of class $\mathcal{C}^{1}$ such that 
\begin{itemize}[leftmargin=28pt,labelsep=10pt]
\item $g(y)-\nabla \mathcal{G}(\phi(y))$ is bounded.
\end{itemize}
Let $V \colon \mathbb{R}^{n}\to \mathbb{R}$ be a $\mathcal{C}^{2}$-function such that $V'(u)\neq 0$ for all $u\in[V=0]$.
Let $D:=[V\leq0]$ be nonempty and bounded.
Suppose that for every $t\in \mathopen{[}0,T\mathclose{]}$ and $u\in \partial D$
\begin{itemize}[leftmargin=28pt,labelsep=10pt]
\item $\langle V'(u),h(t,u)\rangle \geq 0$;
\item $\langle V''(u)y,y\rangle \geq 0$,
for all $y\in \mathbb{R}^{n}$ with $\langle V'(u),y\rangle = 0$.
\end{itemize}
Then, problem \eqref{eq-R} has at least a solution with values in $D$.
\end{theorem}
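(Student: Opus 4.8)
The plan is to apply Theorem~\ref{th-bs2} following the scheme of the proof of Corollary~\ref{cor-V}, using the homotopy
\begin{equation*}
F(t,x,y;\lambda) := \lambda\bigl{(}g(y)+h(t,x)\bigr{)} + (1-\lambda)\,V'(x),
\end{equation*}
so that $F(t,x,y;1)=g(y)+h(t,x)$ is the right-hand side of \eqref{eq-R} and $F(t,x,y;0)=f_{0}(x,y)=V'(x)$ is autonomous. As in Corollary~\ref{cor-V}, I would set $G:=\mathrm{int}\,D=[V<0]$, fix a connected component $G_{0}$ of $G$, choose $V_{u}=V$ as bounding functions on $\partial G_{0}$, and work on $G_{0}$; the solution produced will then have values in $\overline{G_{0}}\subseteq D$.

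The verifications of $(\textsc{h}_{V})$ and $(\textsc{h}_{D})$ are essentially those of Corollary~\ref{cor-V}, the only new point being the friction term. The key observation is that, since $g(y)$ has the same or the opposite direction of $y$ (and $g(0)=0$, forced by continuity), one has $\langle V'(u),g(y)\rangle=0$ whenever $\langle V'(u),y\rangle=0$. Hence, for $u\in\partial G_{0}$, $t\in\mathopen{[}0,T\mathclose{]}$, $\lambda\in\mathopen{[}0,1\mathclose{[}$ and $y$ with $\langle V'(u),y\rangle=0$,
\begin{equation*}
\langle V''(u)y,\phi(y)\rangle + \langle V'(u),F(t,u,y;\lambda)\rangle = A(y)\langle V''(u)y,y\rangle + \lambda\langle V'(u),h(t,u)\rangle + (1-\lambda)\|V'(u)\|^{2} > 0,
\end{equation*}
exactly as before, so that $(\textsc{h}_{V})$ holds; and $(\textsc{h}_{D})$ follows from the convexity of $G_{0}$ granted by Lemma~\ref{lem-convex}, which gives $\mathrm{d}_{\mathrm{B}}(V',G_{0},0)=1$.

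The main obstacle is the Nagumo condition $(\textsc{h}_{N})$, genuinely new here because the field depends on $x'$ through the possibly unbounded term $g(x')$. I would control $z:=\phi(x')$ rather than $x'$ directly, aiming to apply Lemma~\ref{lem-2.1} to $z$ with $p=1$. Its first hypothesis holds with $M_{0}=0$: for each $\omega\in\mathbb{S}^{n-1}$, Rolle's theorem applied to $t\mapsto\langle x(t),\omega\rangle$ yields $t_{0}$ with $\langle x'(t_{0}),\omega\rangle=0$, whence $\langle z(t_{0}),\omega\rangle=A(x'(t_{0}))\langle x'(t_{0}),\omega\rangle=0$. The crux is a $\lambda$-uniform $L^{1}$-bound on $z'$. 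Writing $g(y)=\nabla\mathcal{G}(\phi(y))+r(y)$ with $r$ bounded, testing $z'=\lambda g(x')+\lambda h(t,x)+(1-\lambda)V'(x)$ against $\nabla\mathcal{G}(z)$, and using $\langle z',\nabla\mathcal{G}(z)\rangle=\frac{\mathrm{d}}{\mathrm{d}t}\mathcal{G}(z)$ together with the $T$-periodicity of $z$ (so the total integral of the left-hand side vanishes), one gets
\begin{equation*}
\lambda\int_{0}^{T}\|\nabla\mathcal{G}(z)\|^{2}\,\mathrm{d}t \leq C\int_{0}^{T}\|\nabla\mathcal{G}(z)\|\,\mathrm{d}t \leq C\,T^{1/2}\biggl{(}\int_{0}^{T}\|\nabla\mathcal{G}(z)\|^{2}\,\mathrm{d}t\biggr{)}^{\!1/2},
\end{equation*}
where $C$ bounds $\lambda\|r\|+\lambda\|h\|+(1-\lambda)\|V'\|$ uniformly on $\mathopen{[}0,T\mathclose{]}\times\overline{G_{0}}$ (recall $x(t)\in\overline{G_{0}}$ is bounded). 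This yields $\lambda\,\|\nabla\mathcal{G}(z)\|_{L^{2}}\leq C\,T^{1/2}$, and the decisive feature is the cancellation of $\lambda$: from $\|z'\|_{L^{1}}\leq \lambda\|g(x')\|_{L^{1}}+(\|h\|_{\infty}+\|V'\|_{\infty})T$ and $\|g(x')\|_{L^{1}}\leq T^{1/2}\|\nabla\mathcal{G}(z)\|_{L^{2}}+\|r\|_{\infty}T$, the product $\lambda\,T^{1/2}\|\nabla\mathcal{G}(z)\|_{L^{2}}\leq C\,T$, so $\|z'\|_{L^{1}}\leq M_{1}$ uniformly in $\lambda\in\mathopen{[}0,1\mathclose{[}$.

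With $M_{0}=0$ and this $M_{1}$, Lemma~\ref{lem-2.1} gives $\|z\|_{\infty}=\|\phi(x')\|_{\infty}\leq M_{1}$; since $\phi$ is a homeomorphism, $\phi^{-1}(B[0,M_{1}])$ is compact, whence $\|x'\|_{\infty}\leq K$ for a constant $K$ independent of $\lambda$ and of the solution, which is precisely $(\textsc{h}_{N})$. An application of Theorem~\ref{th-bs2} on $G_{0}$ then produces a $T$-periodic solution with values in $\overline{G_{0}}\subseteq D$. I expect the energy/Cauchy--Schwarz estimate giving the $\lambda$-uniform $L^{1}$-bound on $z'$ to be the delicate step, precisely because one must simultaneously exploit the gradient structure encoded by $\mathcal{G}$ and the periodicity in order to defeat the unboundedness of $g$.
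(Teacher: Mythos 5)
Your proposal is correct, and it shares the paper's overall architecture (Theorem~\ref{th-bs2} on a connected component $G_{0}$ of $[V<0]$ with $V_{u}=V$, the orthogonality $\langle V'(u),g(y)\rangle=0$ whenever $\langle V'(u),y\rangle=0$ to dispose of the friction term in $(\textsc{h}_{V})$, the degree computation via the convexity granted by Lemma~\ref{lem-convex}, and Lemma~\ref{lem-2.1} applied to $z=\phi(x')$ with $M_{0}=0$ from Rolle's theorem), but it deviates at the two points where the friction term actually enters. First, your homotopy $\lambda(g(y)+h(t,x))+(1-\lambda)V'(x)$ fades $g$ out, whereas the paper keeps it at full strength, $F(t,x,y;\lambda)=g(y)+\lambda h(t,x)+(1-\lambda)V'(x)$; both are admissible for $(\textsc{h}_{V})$ for the same reason, and yours has the minor advantage that at $\lambda=0$ the field is exactly $V'$, so the degree condition is literally $\mathrm{d}_{\mathrm{B}}(V',G_{0},0)=1$ with no appeal to $g(0)=0$ (which the paper's choice implicitly uses, and which, as you note, parallelism does force). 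Incidentally, your $(1-\lambda)\|V'(u)\|^{2}$ corrects a harmless typo in the paper, which prints $(1-\lambda)\|V'(u)\|$. Second, your Nagumo estimate takes a different route: the paper tests $z'=(\phi(x'))'$ against itself, writing $\|z'\|^{2}=\langle g(x')-\nabla\mathcal{G}(z),z'\rangle+\tfrac{\mathrm{d}}{\mathrm{d}t}\mathcal{G}(z)+\langle w(t),z'\rangle$ and integrating over the period to obtain $\|z'\|_{L^{2}}\leq (L+R_{0})\sqrt{T}$ in one stroke, then applies Lemma~\ref{lem-2.1} with $p=2$; you test against $\nabla\mathcal{G}(z)$, derive the weighted bound $\lambda\|\nabla\mathcal{G}(z)\|_{L^{2}}\leq C\,T^{1/2}$, and exploit the cancellation of $\lambda$ against the $\lambda g(x')$ term in $z'$ to reach an $L^{1}$ bound and invoke Lemma~\ref{lem-2.1} with $p=1$. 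Your cancellation argument is valid, but worth noting: since $\lambda$ is a constant, the paper's test-against-$z'$ computation goes through verbatim for your homotopy as well, giving $\|z'\|^{2}\leq(\lambda L+R_{0})\|z'\|+\lambda\tfrac{\mathrm{d}}{\mathrm{d}t}\mathcal{G}(z)$ with the last term still integrating to zero, hence the $L^{2}$ bound directly; so the delicate step you flagged can be short-circuited, and what your longer route buys is only independence from that observation, while the paper's buys brevity.
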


\begin{proof}
The proof follows the same steps of Corollary~\ref{cor-V}. Accordingly we just point out the main modifications which are needed.
Having defined $G:={\rm int}D =[V<0]$ and $G_{0}$ as above, we consider the homotopy
\begin{equation*}
F(t,x,y;\lambda) := g(y) + \lambda h(t,x) + (1-\lambda) V'(x).
\end{equation*}
For every $\lambda\in \mathopen{[}0,1\mathclose{[}$, $t\in \mathopen{[}0,T\mathclose{]}$, $u\in \partial G_{0}$, and $y\in \mathbb{R}^{n}$ such that $\langle V'_{u}(u),y\rangle = 0$, it holds that
\begin{align*}
&\langle V''(u)y,\phi(y)\rangle + \langle V'(u),F(t,u,y;\lambda)\rangle =
\\&= A(y) \langle V''(u)y,y\rangle + \langle V'(u),g(y) \rangle 
+ \lambda \langle V'(u),h(t,u)\rangle
+ (1-\lambda) \|V'(u)\|
\\&=A(y) \langle V''(u)y,y\rangle + \lambda \langle V'(u),h(t,u)\rangle
+ (1-\lambda) \|V'(u)\|>0,
\end{align*}
where we have used the fact that $g(y)$ is parallel to $y$. Therefore, condition $(\textsc{h}_{V})$ holds. 
The verification of the degree conditions is the same as in the proof of Corollary~\ref{cor-V} using the convexity of $G_{0}$.

In order to conclude the proof, we need only to check that the Nagumo condition $(\textsc{h}_{N})$ is satisfied with respect to the set $G_{0}$.
To this end, let $\lambda\in\mathopen{[}0,1\mathclose{[}$ and $x(\cdot)$ be a
solution of $(P_{\lambda})$ with values in the bounded set $G_{0}$.
Then, $x(\cdot)$ is uniformly bounded and consequently
\begin{equation*}
w(t) := \lambda h(t,x(t)) + (1-\lambda) V'(x(t))
\end{equation*}
is uniformly bounded too, by a constant $R_{0}$.
Consider now the system
\begin{equation*}
\begin{cases}
\, ( \phi(x') )' = g(x') + w(t),\\
\, x(0) = x(T), \quad x'(0) = x'(T).
\end{cases}
\end{equation*}
From this we have
\begin{align*}
\| (\phi(x'(t)))' \|^{2} &= \langle g(x'(t))-\nabla \mathcal{G}(\phi(x'(t))), (\phi(x'(t)))' \rangle 
\\
&\quad + \langle \nabla \mathcal{G}(\phi(x'(t))), (\phi(x'(t)))' \rangle + \langle w(t), (\phi(x'(t)))' \rangle
\\
&\leq L \|(\phi(x'(t)))'\| + R_{0} \|(\phi(x'(t)))'\| + \dfrac{\mathrm{d}}{\mathrm{d}t} \mathcal{G}(\phi(x'(t))),
\end{align*}
where $L>0$ is a constant which bounds $g(y)-\nabla \mathcal{G}(\phi(y))$.
An integration on $\mathopen{[}0,T\mathclose{]}$ yields to
\begin{equation*}
\| (\phi(x'))' \|_{L^{2}}^{2} \leq (L+R_{0}) \sqrt{T} \, \|(\phi(x'))'\|_{L^{2}}
\end{equation*}
and thus a bound for $(\phi(x'(\cdot)))'$ in $L^{2}$ is achieved.
An application of Lemma~\ref{lem-2.1} for $p=2$, arguing as in the proof of Corollary~\ref{cor-3.1}, ensures an a priori bound for $\|x'\|_{\infty}$.
Finally, Theorem~\ref{th-bs2} implies the existence of a solution in $\overline{G_{0}}$ and hence in $D$.
\end{proof}

As in Section~\ref{section-3.1} we have the following straightforward corollaries.

\begin{corollary}[Hartman--Knobloch]\label{th-HK-2}
Let $\phi \colon \mathbb{R}^{n} \to \phi(\mathbb{R}^{n})=\mathbb{R}^{n}$ be a homeomorphism of the form \eqref{phi-A} and let
$h = h(t,x)\colon \mathopen{[}0,T\mathclose{]} \times \mathbb{R}^{n}\to \mathbb{R}^{n}$ be a continuous function.
Let $g\colon \mathbb{R}^{n}\to \mathbb{R}^{n}$ be continuous and such that $g(y)$ has either the same or the opposite direction of $y$.
If there exists $R > 0$ such that Hartman's condition
\begin{equation*}
\langle h(t,x),x\rangle \geq 0, \quad \text{for every $t\in\mathopen{[}0,T\mathclose{]}$ and $x\in\mathbb{R}^{n}$ with $\|x\|= R$,}
\end{equation*}
holds, then there exists a solution $x$ of \eqref{eq-fx} such that $\|x(t)\|\leq R$ for all $t\in\mathopen{[}0,T\mathclose{]}$.
\end{corollary}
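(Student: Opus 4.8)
The plan is to specialise Theorem~\ref{th-Rayleigh} to the ball $D=B[0,R]$, in exact analogy with the second proof of Theorem~\ref{th-HK}. First I would take the $\mathcal{C}^{\infty}$-function $V(x):=(\|x\|^{2}-R^{2})/2$, whose gradient is $V'(x)=x$ and whose Hessian $V''(x)$ is the identity matrix. With this choice the sublevel set $D=[V\leq0]$ is precisely the closed ball $B[0,R]$, which is nonempty and bounded, and its boundary is the sphere $\partial D=[V=0]=\{x\in\mathbb{R}^{n}\colon\|x\|=R\}$. Since $\|u\|=R>0$ on $\partial D$, we have $V'(u)=u\neq0$ there, so the regularity requirement $V'(u)\neq0$ on $[V=0]$ demanded by Theorem~\ref{th-Rayleigh} is met.

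Next I would verify the two boundary conditions of Theorem~\ref{th-Rayleigh}. For the first, at any $u\in\partial D$ one has $\langle V'(u),h(t,u)\rangle=\langle u,h(t,u)\rangle$, which is nonnegative for all $t\in[0,T]$ precisely by Hartman's condition $\langle h(t,x),x\rangle\geq0$ on $\|x\|=R$. For the second, since $V''(u)$ equals the identity, $\langle V''(u)y,y\rangle=\|y\|^{2}\geq0$ for every $y\in\mathbb{R}^{n}$; in particular this holds for the $y$ orthogonal to $V'(u)$, so the semi-definiteness hypothesis is satisfied trivially. The assumptions on $g$ --- that $g(y)$ is parallel to $y$ and that $g(y)-\nabla\mathcal{G}(\phi(y))$ is bounded --- are exactly those of Theorem~\ref{th-Rayleigh} and are inherited unchanged, so nothing further needs to be checked on that side.

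With all the hypotheses of Theorem~\ref{th-Rayleigh} verified for $G=\mathrm{int}\,D=B(0,R)$ (which is connected and convex) and $V$ as above, that theorem produces a solution of \eqref{eq-R} with values in $D=B[0,R]$, that is, $\|x(t)\|\leq R$ for all $t\in[0,T]$. I do not expect any genuine obstacle here: the ball is automatically convex, so Lemma~\ref{lem-convex} plays no active role, and the Nagumo a priori bound is supplied verbatim by the argument already carried out in the proof of Theorem~\ref{th-Rayleigh}. The only point deserving attention is the purely bookkeeping one, namely that the standing hypotheses on $g$ and on the auxiliary potential $\mathcal{G}$ are understood to be carried over from Theorem~\ref{th-Rayleigh}.
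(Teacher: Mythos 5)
Your proposal is correct and follows exactly the route the paper intends: the corollary is stated as a straightforward consequence of Theorem~\ref{th-Rayleigh}, obtained by taking $V(x)=(\|x\|^{2}-R^{2})/2$ with $V'(x)=x$ and $V''(x)=\mathrm{Id}$, precisely as in the second proof of Theorem~\ref{th-HK}. You also rightly flag the only bookkeeping point, namely that the auxiliary hypothesis on $\mathcal{G}$ (boundedness of $g(y)-\nabla\mathcal{G}(\phi(y))$) from Theorem~\ref{th-Rayleigh} is tacitly carried over in the corollary's statement.
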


\begin{corollary}[Poincar\'{e}--Miranda]\label{th-PM-2}
Let $\phi \colon \mathbb{R}^{n} \to \phi(\mathbb{R}^{n})=\mathbb{R}^{n}$
be a homeomorphism of the form \eqref{phi-A} and let $h = h(t,x)\colon \mathopen{[}0,T\mathclose{]} \times \mathbb{R}^{n}\to \mathbb{R}^{n}$ be a continuous function. Assume that there exists a $n$-dimensional rectangle $\mathcal{R}:=\prod_{i=1}^{n} \mathopen{[}a_{i},b_{i}\mathclose{]}$ such that for each $i\in\{1,\ldots,n\}$ it holds that
\begin{equation*}
\begin{cases}
\, h_{i}(t,x)\leq 0, & \text{for all $x\in\partial\mathcal{R}$ with $x_{i}=a_{i}$,}
\\
\, h_{i}(t,x)\geq 0, & \text{for all $x\in\partial\mathcal{R}$ with $x_{i}=b_{i}$.}
\end{cases}
\end{equation*}
Then, there exists a solution $x$ of \eqref{eq-fx} with $x(t)\in\mathcal{R}$ for all $t\in\mathopen{[}0,T\mathclose{]}$.
\end{corollary}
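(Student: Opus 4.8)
The plan is to view $\mathcal{R}$ as a convex body and to invoke the outer-normal form of the bound set method, i.e.\ Corollary~\ref{cor-3.1}, rather than the smooth sublevel-set version of Corollary~\ref{cor-V}: the edges and corners of $\mathcal{R}$ obstruct the construction of a single $\mathcal{C}^{2}$ bounding function, but they cause no trouble for a field of outer normals. Concretely, I would set $G := \mathrm{int}\,\mathcal{R} = \prod_{i=1}^{n}\mathopen{]}a_{i},b_{i}\mathclose{[}$, which is open, bounded and convex with $\overline{G}=\mathcal{R}$ and $\partial G=\partial\mathcal{R}$, and fix an arbitrary point $P\in G$.

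For $u\in\partial\mathcal{R}$ let $I_{+}(u):=\{i\colon u_{i}=b_{i}\}$ and $I_{-}(u):=\{i\colon u_{i}=a_{i}\}$ be the active upper and lower faces at $u$ (at least one index occurs, since $u\in\partial\mathcal{R}$), and, writing $(e_{i})_{i=1}^{n}$ for the standard basis, define $\nu_{u}:=\sum_{i\in I_{+}(u)}e_{i}-\sum_{i\in I_{-}(u)}e_{i}$. A one-line computation gives, for every $x\in\mathcal{R}$, that $\langle x-u,\nu_{u}\rangle=\sum_{i\in I_{+}(u)}(x_{i}-b_{i})+\sum_{i\in I_{-}(u)}(a_{i}-x_{i})\leq 0$, so $\mathcal{R}\subseteq\{x\colon\langle x-u,\nu_{u}\rangle\leq 0\}$ and $\nu_{u}\in\mathcal{N}_{u}\setminus\{0\}$; hence $(\nu_{u})_{u\in\partial G}$ is a field of outer normals. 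The sign hypothesis of Corollary~\ref{cor-3.1} is then read off the Poincar\'{e}--Miranda inequalities: for $i\in I_{+}(u)$ one has $h_{i}(t,u)\geq 0$ and for $i\in I_{-}(u)$ one has $h_{i}(t,u)\leq 0$, so that $\langle h(t,u),\nu_{u}\rangle=\sum_{i\in I_{+}(u)}h_{i}(t,u)-\sum_{i\in I_{-}(u)}h_{i}(t,u)\geq 0$ for all $t\in\mathopen{[}0,T\mathclose{]}$.

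With these two facts in hand, Corollary~\ref{cor-3.1} applied with $f:=h$ produces a solution with values in $\overline{G}=\mathcal{R}$, which is the claim. If a friction term $g(x')$ is present (as elsewhere in this section), I would instead run the homotopy $F(t,x,y;\lambda):=g(y)+\lambda h(t,x)+(1-\lambda)(x-P)$ together with the local bounding functions $V_{u}(x):=\langle x-u,\nu_{u}\rangle$: since $V_{u}''\equiv 0$ and $g(y)$ is parallel to $y$, the term $\langle\nu_{u},g(y)\rangle$ vanishes whenever $\langle\nu_{u},y\rangle=0$, so $(\textsc{h}_{V})$ collapses to $\lambda\langle\nu_{u},h(t,u)\rangle+(1-\lambda)\langle u-P,\nu_{u}\rangle>0$, the degree condition $(\textsc{h}_{\mathrm{D}})$ reduces to $\mathrm{d}_{\mathrm{B}}(\mathrm{Id}_{\mathbb{R}^{n}}-P,G,0)=1$, and $(\textsc{h}_{\mathrm{N}})$ is obtained by the same $L^{2}$ estimate and Lemma~\ref{lem-2.1} as in Theorem~\ref{th-Rayleigh}. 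The only genuinely delicate point is the non-smoothness of $\partial\mathcal{R}$ along its lower-dimensional faces, where the normal cone $\mathcal{N}_{u}$ is multidimensional; the crucial observation that resolves it is that each active face contributes a term of the correct sign, so \emph{any} admissible selection from $\mathcal{N}_{u}$---in particular the symmetric choice $\nu_{u}$ above---preserves the inequality $\langle h(t,u),\nu_{u}\rangle\geq 0$, after which no new a priori bound is needed.
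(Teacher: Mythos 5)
Your proof is correct and follows essentially the route the paper intends: the paper leaves this result unproved as a ``straightforward corollary,'' and the argument it has in mind (exactly as for the parallel Theorem~\ref{th-PM} in Section~\ref{section-3.1}) is precisely yours --- apply Corollary~\ref{cor-3.1} to $G=\mathrm{int}\,\mathcal{R}$ with the outer-normal field $\nu_{u}=\sum_{i\in I_{+}(u)}e_{i}-\sum_{i\in I_{-}(u)}e_{i}$ built from the active faces, the Poincar\'{e}--Miranda inequalities yielding $\langle h(t,u),\nu_{u}\rangle\geq 0$, after which the Nagumo and degree conditions are handled inside Corollary~\ref{cor-3.1} itself. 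Your closing paragraph is a worthwhile addition rather than a digression: since the statement sits in the Rayleigh section (its reference to \eqref{eq-fx} with no $g(x')$ term appears to be a slip for \eqref{eq-R}), you rightly note that Theorem~\ref{th-Rayleigh} cannot be cited verbatim --- a rectangle is not a sublevel set of a $\mathcal{C}^{2}$ function with nonvanishing gradient on the boundary --- and your rerun of Theorem~\ref{th-bs2} with the linear local bounding functions $V_{u}(x)=\langle x-u,\nu_{u}\rangle$, the homotopy $g(y)+\lambda h(t,x)+(1-\lambda)(x-P)$, and the $L^{2}$ Nagumo estimate of Theorem~\ref{th-Rayleigh} is the correct repair, with the tacit understanding that the standing hypotheses on $g$ there ($g(y)$ parallel to $y$, and $g-\nabla\mathcal{G}\circ\phi$ bounded for some $\mathcal{C}^{1}$ function $\mathcal{G}$) remain in force.
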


\subsection{A $\phi$-Laplacian Li\'{e}nard equation}\label{section-3.3}

All the preceding examined examples depend on the fact that first we find a set $G\subseteq\mathbb{R}^{n}$ with no solutions tangent to the boundary from the interior, and next we provide some a priori bound on $\|x'(t)\|$ using Nagumo-type conditions. There are however some situations in which the particular form of the equations allows to find a priori bounds on $\|x'\|_{L^{p}}$ (for some $p$) independently on $x$. In such cases, the set $G$ can be found using some ``sign-conditions'' on the nonlinearity. As a possible example in this direction and our third application, we deal with the following $T$-periodic boundary value problem associated with a $\phi$-Laplacian Li\'{e}nard equation
\begin{equation}\label{eq-L-0}
\begin{cases}
\, ( \phi(x') )' = \dfrac{\mathrm{d}}{\mathrm{d}t} \nabla \mathcal{G}(x) + h(t,x),\\
\, x(0) = x(T), \quad x'(0) = x'(T).
\end{cases}
\end{equation}
In this setting, we can state the following.

\begin{theorem}\label{th-Lienard-0}
Let $\phi \colon \mathbb{R}^{n} \to \phi(\mathbb{R}^{n})=\mathbb{R}^{n}$ be a homeomorphism such that
\begin{itemize}[leftmargin=28pt,labelsep=10pt]
\item [$(\textsc{h}_{\phi})$] $\langle \phi(\xi),\xi \rangle >0$ for every $\xi\in\mathbb{R}^{n}\setminus\{0\}$, and for every $\eta>0$ there exists $M_{\eta}>0$ such that $\langle \phi(\xi),\xi \rangle \geq \eta \|\xi\| - M_{\eta}$, for all $\xi\in\mathbb{R}^{n}$.
\end{itemize}
Let $\mathcal{G}\colon\mathbb{R}^{n}\to\mathbb{R}^{n}$ be a $\mathcal{C}^{2}$-function. Let $h=h(t,x) \colon \mathopen{[}0,T\mathclose{]}\times\mathbb{R}^{n}\to \mathbb{R}^{n}$ be a continuous function such that
\begin{itemize}[leftmargin=28pt,labelsep=10pt]
\item [$(\textsc{h}_{\textsc{h}})$] there exists $R > 0$ such that $\langle h(t,x),x\rangle \geq 0$, for every $t\in\mathopen{[}0,T\mathclose{]}$ and $x\in\mathbb{R}^{n}$ with $\|x\| \geq R$.
\end{itemize}
Then, problem \eqref{eq-L-0} has at least a solution.
\end{theorem}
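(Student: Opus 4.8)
The plan is to apply the Mawhin-type continuation Theorem~\ref{th-cont-2} directly, rather than the bounding-function results, because the Li\'enard term spoils any pointwise Lyapunov condition: writing the equation as $(\phi(x'))'=F(t,x,x')$ with $F(t,x,y)=\nabla^{2}\mathcal{G}(x)y+h(t,x)$, the candidate condition $(\textsc{h}'_{V})$ for $V(x)=\tfrac12(\|x\|^{2}-\rho^{2})$ would require $\langle y,\phi(y)\rangle+\lambda\langle u,\nabla^{2}\mathcal{G}(u)y\rangle+\lambda\langle u,h(t,u)\rangle>0$ on $\langle u,y\rangle=0$, and there is no sign control on the cross term $\langle u,\nabla^{2}\mathcal{G}(u)y\rangle$. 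Instead I would exploit the Li\'enard structure to produce \emph{global} a priori bounds, independent of $x$ and of $\lambda$. The key device is the substitution $y:=\phi(x')-\lambda\nabla\mathcal{G}(x)$, for which the homotopic equation $(P'_{\lambda})$ becomes $y'=\lambda h(t,x)$; in particular $\int_{0}^{T}h(t,x(t))\,\mathrm{d}t=0$ by $T$-periodicity.

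First I would derive an energy estimate by pairing with $x$. Since $\tfrac{\mathrm{d}}{\mathrm{d}t}\langle y,x\rangle=\lambda\langle h(t,x),x\rangle+\langle\phi(x'),x'\rangle-\lambda\tfrac{\mathrm{d}}{\mathrm{d}t}\mathcal{G}(x)$, integrating over $[0,T]$ and using periodicity kills the total-derivative terms and yields $\int_{0}^{T}\langle\phi(x'),x'\rangle\,\mathrm{d}t=-\lambda\int_{0}^{T}\langle h(t,x),x\rangle\,\mathrm{d}t$. Splitting the right-hand integral over $\{\|x\|\ge R\}$ (where $\langle h,x\rangle\ge0$ by $(\textsc{h}_{\textsc{h}})$) and $\{\|x\|<R\}$ (where it is bounded below by $-R\max_{t,\|\xi\|\le R}\|h(t,\xi)\|=:-C_{0}$), I obtain $\int_{0}^{T}\langle\phi(x'),x'\rangle\,\mathrm{d}t\le C_{0}T$. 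The coercivity half of $(\textsc{h}_{\phi})$ (say with $\eta=1$) then gives $\|x'\|_{L^{1}}\le(C_{0}+M_{1})T=:M_{1}^{*}$. Finally, if $\|x(t)\|>R$ for every $t$ then $\langle h,x\rangle\ge0$ forces both integrals above to vanish, whence $\langle\phi(x'),x'\rangle\equiv0$ and $x\equiv x_{0}$ is constant with $h(\cdot,x_{0})\equiv0$ --- already a solution of \eqref{eq-L-0}. Excluding this trivial case, $\min_{t}\|x(t)\|\le R$, so $\|x\|_{\infty}\le R+M_{1}^{*}=:R_{1}$.

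The delicate step is the a priori bound on $\|x'\|_{\infty}$, since $\phi$ is a \emph{general} homeomorphism and the trick of Corollary~\ref{cor-3.1} (transferring $\langle x',\omega\rangle=0$ to $\langle\phi(x'),\omega\rangle=0$ via $\phi(\xi)=A(\xi)\xi$) is not available here. My plan is a two-sided argument on $z:=\phi(x')$. On one hand, $z'=(\phi(x'))'=\lambda\nabla^{2}\mathcal{G}(x)x'+\lambda h(t,x)$ together with $\|x\|_{\infty}\le R_{1}$ and $\|x'\|_{L^{1}}\le M_{1}^{*}$ bounds $\|z'\|_{L^{1}}$, so the $T$-periodic function $z$ has controlled oscillation. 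On the other hand, from $\int_{0}^{T}\langle\phi(x'),x'\rangle\,\mathrm{d}t\le C_{0}T$ a Chebyshev argument produces a set of times of measure $\ge T/2$ on which $\langle\phi(x'),x'\rangle\le2C_{0}$; coercivity bounds $\|x'\|$, hence $\|z\|$, on that set, giving a point $t_{1}$ with $\|z(t_{1})\|$ controlled. Adding $\|z'\|_{L^{1}}$ bounds $\|z\|_{\infty}=\|\phi(x')\|_{\infty}$, and continuity of $\phi^{-1}$ on compacta converts this into $\|x'\|_{\infty}\le K$. (Lemma~\ref{lem-2.1} could package the oscillation part once such a good point $t_1$ is located.)

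It remains to set $\Omega:=\{x\in\mathcal{C}^{1}_{T}\colon\|x\|_{\infty}<R_{1}+1,\ \|x'\|_{\infty}<K+1\}$, so that the bounds just obtained place every solution of $(P'_{\lambda})$ strictly inside $\Omega$, verifying $(\textsc{h}'_{1})$. Since $F(t,s,0)=h(t,s)$, the degree in $(\textsc{h}'_{2})$ is $\mathrm{d}_{\mathrm{B}}(F^{\#},\Omega\cap\mathbb{R}^{n},0)$ with $F^{\#}(s)=\tfrac1T\int_{0}^{T}h(t,s)\,\mathrm{d}t$ satisfying $\langle F^{\#}(s),s\rangle\ge0$ for $\|s\|\ge R$. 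I expect this degree computation to be the main obstacle, precisely because the sign condition is \emph{non-strict}: the homotopy $\mu\,s+(1-\mu)F^{\#}(s)$ to the identity can only fail on $\partial B(0,R_{1}+1)$ where $F^{\#}$ itself vanishes, which the hypotheses do not rule out. The clean way around this, which I would adopt, is to replace $h$ by $h_{\varepsilon}(t,x):=h(t,x)+\varepsilon x$: for $\varepsilon>0$ the sign condition becomes strict, the constant-solution alternative above cannot occur, and the degree equals $1$; all the a priori constants stay uniform for $\varepsilon\in\mathopen{]}0,1\mathclose{]}$, so the resulting solutions $x_{\varepsilon}$ are bounded in $\mathcal{C}^{1}_{T}$ with $(\phi(x_{\varepsilon}'))'$ uniformly bounded, whence $\phi(x_{\varepsilon}')$ is equicontinuous and, by Arzel\`a--Ascoli together with the uniform continuity of $\phi^{-1}$ on compacta, a subsequence of $x_{\varepsilon}$ converges in $\mathcal{C}^{1}_{T}$ to a solution of \eqref{eq-L-0} as $\varepsilon\to0^{+}$.
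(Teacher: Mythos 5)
Your proposal is correct, but it takes a genuinely different route from the paper's. The paper applies Theorem~\ref{th-cont} (through Theorem~\ref{th-bs}, with the trivial bound set $G=B(0,R^{*})$) to the homotopy $(\phi(x'))'=\lambda\frac{\mathrm{d}}{\mathrm{d}t}\nabla\mathcal{G}(x)+h_{\lambda}(t,x)$, where $h_{\lambda}(t,x):=\lambda h(t,x)+(1-\lambda)x$: the term $(1-\lambda)x$ plays exactly the role of your $\varepsilon x$-regularization, but it is built into the homotopy parameter itself. Since $(\textsc{h}_{1})$ only concerns $\lambda\in\mathopen{[}0,1\mathclose{[}$, the strictly positive term $(1-\lambda)\int_{0}^{T}\|x(t)\|^{2}\,\mathrm{d}t$ in the energy identity rules out solutions with $\|x(t)\|\geq R$ for all $t$ directly (no constant-solution dichotomy is needed), at $\lambda=0$ the degree condition reduces to $\mathrm{d}_{\mathrm{B}}(\mathrm{Id}_{\mathbb{R}^{n}},G,0)=1$, and the compactness inherent in the continuation theorem produces the solution at $\lambda=1$; your two-stage scheme --- Theorem~\ref{th-cont-2} applied to $h_{\varepsilon}=h+\varepsilon x$ with bounds uniform in $\varepsilon\in\mathopen{]}0,1\mathclose{]}$, followed by an Arzel\`{a}--Ascoli passage $\varepsilon\to0^{+}$ --- proves the same statement at the cost of redoing by hand the limiting argument that the paper's choice of homotopy delegates to the continuation theorem. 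Your Steps 1--2 otherwise match the paper's energy estimates (the Li\'{e}nard term integrates to zero over a period, $\|x'\|_{L^{1}}$ via $(\textsc{h}_{\phi})$ with $\eta=1$, then $\|x\|_{\infty}$ from a time with $\|x(\tilde{t})\|\leq R$). One point where your write-up is in fact more careful than the paper's: for the bound on $\|x'\|_{\infty}$ the paper says ``arguing as in the proof of Corollary~\ref{cor-3.1}'', but the Rolle-point trick there transfers $\langle x'(t_{0}),\omega\rangle=0$ to $\langle\phi(x'(t_{0})),\omega\rangle=0$ only for $\phi$ of the form \eqref{phi-A}, which is \emph{not} assumed in this theorem; the correct completion under the bare hypothesis $(\textsc{h}_{\phi})$ is precisely what you supply --- locate a good time $t_{1}$ (your Chebyshev argument on $\int_{0}^{T}\langle\phi(x'),x'\rangle\,\mathrm{d}t$, or more simply a mean-value point with $\|x'(t_{1})\|\leq\|x'\|_{L^{1}}/T$), bound $\|\phi(x'(t_{1}))\|$ by continuity of $\phi$ on compacta, feed this into Lemma~\ref{lem-2.1} together with the $L^{1}$ bound on $(\phi(x'))'$, and invert using the boundedness of $\phi^{-1}$ on compact sets.
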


\begin{proof}
We aim to apply Theorem~\ref{th-cont}. Accordingly, we introduce the parameter-dependent problem
\begin{equation}\label{eq-L-0-lambda}
\begin{cases}
\, ( \phi(x') )' = \lambda \dfrac{\mathrm{d}}{\mathrm{d}t} \nabla \mathcal{G}(x) + h_{\lambda}(t,x),\\
\, x(0) = x(T), \quad x'(0) = x'(T),
\end{cases}
\end{equation}
where
\begin{equation}\label{h-lambda}
h_{\lambda}(t,x) := \lambda h(t,x) + (1-\lambda) x, \quad \lambda\in\mathopen{[}0,1\mathclose{]}.
\end{equation}
We divide the proof in some steps.

\smallskip
\noindent
\textit{Step~1. A priori bound of $\|x'\|_{L^{1}}$.} Let $x(\cdot)$ be a solution of problem~\eqref{eq-L-lambda}. By integrating the scalar product between $x(\cdot)$ and the equation in \eqref{eq-L-0-lambda}, we have
\begin{equation}\label{eq-3.6}
\int_{0}^{T} \langle \phi(x'(t)),x'(t) \rangle \,\mathrm{d}t 
+  \int_{0}^{T}\langle h_{\lambda}(t,x(t)),x(t)\rangle\,\mathrm{d}t 
=0.
\end{equation}
Next, we observe that condition $(\textsc{h}_{\textsc{h}})$ implies the existence of $K_{0}>0$ such that $\langle h(t,x),x\rangle \geq -K_{0}$, for every $t\in\mathopen{[}0,T\mathclose{]}$ and $x\in\mathbb{R}^{n}$. Therefore, from \eqref{eq-3.6}, we deduce that
\begin{equation*}
\int_{0}^{T} \langle \phi(x'(t)),x'(t) \rangle \,\mathrm{d}t 
\leq TK_{0}.
\end{equation*}
We fix $\eta=1$ and, by hypothesis $(\textsc{h}_{\phi})$, we obtain that
\begin{equation*}
\|x'\|_{L^{1}} = \int_{0}^{T} \|x'(t)\|\,\mathrm{d}t \leq T(K_{0}+M_{1})=:K_{1},
\end{equation*}
which is the desired bound.

\smallskip
\noindent
\textit{Step~2. A priori bound of $\|x\|_{\infty}$.}
Let $x(\cdot)$ be a solution of problem~\eqref{eq-L-lambda}.
First, we prove that there exists $\tilde{t}\in\mathopen{[}0,T\mathclose{]}$ such that $\|x(\tilde{t})\|<R$. Indeed, if it is not true, $\|x(t)\|\geq R$ for all $t\in\mathopen{[}0,T\mathclose{]}$ and from \eqref{eq-3.6} (and since $\lambda\in\mathopen{[}0,1\mathclose{[}$), we deduce that
\begin{equation*}
0 = \int_{0}^{T} \langle \phi(x'(t)),x'(t) \rangle \,\mathrm{d}t 
+ \lambda \int_{0}^{T}\langle h(t,x(t)),x(t)\rangle\,\mathrm{d}t + (1-\lambda) \int_{0}^{T} \|x(t)\|^{2}\,\mathrm{d}t
> 0,
\end{equation*}
a contradiction. Consequently, for every $t\in\mathopen{[}0,T\mathclose{]}$, we immediately obtain
\begin{equation*}
\|x(t)\|= \biggl{\|} x(\tilde{t}) + \int_{\tilde{t}}^{t} x'(s)\,\mathrm{d}s \biggr{\|}
\leq \|x(\tilde{t})\| + \int_{0}^{T}\|x'(s)\|\,\mathrm{d}s < R+K_{1}T=:R^{*},
\end{equation*}
and thus $\|x\|_{\infty}<R^{*}$, as desired. As a consequence, the open ball $G=B(0,R^{*})$ is (trivially) a bound set for system \eqref{eq-L-0-lambda}.

\smallskip
\noindent
\textit{Step~3. Conclusion.}
From
\begin{equation*}
\|(\phi(x'))'\|_{L^{1}}
= \int_{0}^{T} \|(\phi(x'(t)))'\|\,\mathrm{d}t
\leq \|\mathrm{Hess}\,\mathcal{G}(x)\|_{\infty} \|x'\|_{L^{1}}+ T \|h_{\lambda}(\cdot,x)\|_{\infty},
\end{equation*}
we have that $(\phi(x'(\cdot)))'$ is bounded in $L^{1}$.
Therefore, an application of Lemma~\ref{lem-2.1} for $p=1$, arguing as in the proof of Corollary~\ref{cor-3.1}, ensures an a priori bound for $\|x'\|_{\infty}$.

For $\lambda=0$, we have $\mathrm{d}_{\mathrm{B}}(\mathrm{Id}_{\mathbb{R}^{n}}, G, 0)= 1 \neq0$. Therefore, the thesis follows from Theorem~\ref{th-bs}.
\end{proof}

\begin{remark}\label{rem-3.1}
If $\phi \colon \mathbb{R}^{n} \to \phi(\mathbb{R}^{n})=\mathbb{R}^{n}$ is a homeomorphism of the form \eqref{phi-A}, then $\phi$ satisfies hypothesis $(\textsc{h}_{\phi})$ of Theorem~\ref{th-Lienard-0}. Indeed, for $\xi\neq0$, we have 
\begin{equation*}
\dfrac{\langle \phi(\xi),\xi\rangle}{\|\xi\|} = \dfrac{A(\xi)\|\xi\|^{2}}{\|\xi\|} = \|A(\xi)\xi\| = \|\phi(\xi)\| \to + \infty, \quad \text{as $\|\xi\|\to+\infty$.}
\end{equation*}
Hence, $(\textsc{h}_{\phi})$ follows.
\hfill$\lhd$
\end{remark}

Our result is related to a classical theorem by Reissig \cite[Theorem~3]{Re-75} for the classical scalar generalized Li\'{e}nard equation with a periodic forcing term
\begin{equation*}
x'' + \mathscr{F}(x) x' + \mathscr{G}(x) = p(t) \equiv p(t+T),
\end{equation*}
where the existence of a $T$-periodic solution is proved by assuming $\mathscr{G}(x)x\leq 0$ for every $x\in\mathbb{R}$ with $|x|\geq d>0$ and $\int_{0}^{T} p(t)\,\mathrm{d}t=0$. No special assumption besides continuity on $\mathscr{F}$ is considered.
Extensions of this and other related results for a $\phi$-Laplacian differential operator with or without singularity (including the Minkowski operator for the relativistic acceleration) have been obtained in \cite{BeMa-07,Ma-13} in the scalar case.
We show now how the proof of Theorem~\ref{th-Lienard-0} can be easily adapted to treat the case of a periodic forcing term with zero mean value, namely we deal with
\begin{equation}\label{eq-L}
\begin{cases}
\, ( \phi(x') )' = \dfrac{\mathrm{d}}{\mathrm{d}t} \nabla \mathcal{G}(x) + h(t,x) + p(t),\\
\, x(0) = x(T), \quad x'(0) = x'(T).
\end{cases}
\end{equation}

\begin{theorem}\label{th-Lienard}
Let $\phi \colon \mathbb{R}^{n} \to \phi(\mathbb{R}^{n})=\mathbb{R}^{n}$ be a homeomorphism satisfying $(\textsc{h}_{\phi})$. Let $\mathcal{G}\colon\mathbb{R}^{n}\to\mathbb{R}^{n}$ be a $\mathcal{C}^{2}$-function. Let $p \colon \mathopen{[}0,T\mathclose{]} \to \mathbb{R}^{n}$ be a continuous function with $\int_{0}^{T} p(t)\,\mathrm{d}t=0$.
Let $h=h(t,x) \colon \mathopen{[}0,T\mathclose{]}\times\mathbb{R}^{n}\to \mathbb{R}^{n}$ be a continuous function such that
\begin{itemize}[leftmargin=30pt,labelsep=10pt]
\item [$(\textsc{h}_{\textsc{h}}^{+})$] there exists $K_{0}>0$ such that $\langle h(t,x),x\rangle \geq -K_{0}$, for all $x\in\mathbb{R}^{n}$.
\end{itemize}
Moreover, suppose that at least one of the following three condition holds:
\begin{itemize}[leftmargin=28pt,labelsep=10pt]
\item [$(i)$] $\langle h(t,x), x \rangle \to +\infty$ as $\|x\|\to+\infty$ uniformly in $t$;
\item [$(ii)$] for every $\rho>0$ there exists $R_{\rho}>0$ such that $\langle h(x+y),x \rangle \geq 0$, for all $(x,y)\in\mathbb{R}^{2n}$ with $\|x\|>R_{\rho}$ and $\|y\|\leq\rho$;
\item [$(iii)$] there exists $d>0$ such that, for every $i=1,\ldots,n$, $h_{i}(t,x) x_{i} \geq 0$, for every $x=(x_{1},\ldots,x_{n})\in\mathbb{R}^{n}$ with $|x_{i}|> d$.
\end{itemize}
Then, problem \eqref{eq-L} has at least a solution.
\end{theorem}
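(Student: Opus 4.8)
The plan is to follow verbatim the three-step scheme of the proof of Theorem~\ref{th-Lienard-0}, applying the continuation Theorem~\ref{th-cont} with the homotopy
\begin{equation*}
F(t,x,y;\lambda) := \lambda\,\mathrm{Hess}\,\mathcal{G}(x)\,y + h_{\lambda}(t,x) + \lambda p(t), \qquad h_{\lambda}(t,x) := \lambda h(t,x) + (1-\lambda)x,
\end{equation*}
so that $F(\,\cdot\,;1)$ is the vector field of \eqref{eq-L} and $F(\,\cdot\,;0)=x$. The only genuinely new features with respect to Theorem~\ref{th-Lienard-0} are the zero-mean forcing $p$ and the replacement of the sign condition $(\textsc{h}_{\textsc{h}})$ by the weaker $(\textsc{h}_{\textsc{h}}^{+})$ together with one of $(i)$--$(iii)$; accordingly I would detail only the modifications in Steps~1 and~2, the remaining arguments being identical.

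For Step~1 (bound on $\|x'\|_{L^{1}}$) I would again take the scalar product of the equation with $x$ and integrate on $\mathopen{[}0,T\mathclose{]}$. Periodicity annihilates both the boundary term from $(\phi(x'))'$ and the whole $\frac{\mathrm{d}}{\mathrm{d}t}\nabla\mathcal{G}(x)$ contribution, since $\int_{0}^{T}\frac{\mathrm{d}}{\mathrm{d}t}\mathcal{G}(x)\,\mathrm{d}t=0$, leaving
\begin{equation*}
\int_{0}^{T}\langle\phi(x'),x'\rangle\,\mathrm{d}t + \int_{0}^{T}\langle h_{\lambda}(t,x),x\rangle\,\mathrm{d}t + \lambda\int_{0}^{T}\langle p,x\rangle\,\mathrm{d}t = 0.
\end{equation*}
By $(\textsc{h}_{\textsc{h}}^{+})$ the middle term is $\geq -K_{0}T$. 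Writing $x=\bar{x}+\tilde{x}$ with $\bar{x}$ the mean and $\tilde{x}$ the zero-mean part, the hypothesis $\int_{0}^{T}p=0$ gives $\int_{0}^{T}\langle p,x\rangle=\int_{0}^{T}\langle p,\tilde{x}\rangle$, which I would bound by $\|p\|_{L^{1}}\|\tilde{x}\|_{\infty}$; a Wirtinger-type estimate $\|\tilde{x}\|_{\infty}\leq C\|x'\|_{L^{1}}$ (each component of $\tilde{x}$ vanishes somewhere, being of zero mean) then turns this into $C\|p\|_{L^{1}}\|x'\|_{L^{1}}$. Finally the coercivity in $(\textsc{h}_{\phi})$, namely $\int_{0}^{T}\langle\phi(x'),x'\rangle\geq\eta\|x'\|_{L^{1}}-M_{\eta}T$, applied now with $\eta$ chosen \emph{larger} than $C\|p\|_{L^{1}}$ (rather than $\eta=1$), lets the $p$-term be absorbed and yields a uniform bound $\|x'\|_{L^{1}}\leq K_{1}$.

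Step~2 (bound on $\|x\|_{\infty}$) is where the main work lies, and the key extra tool is the relation obtained by integrating the equation itself: periodicity of $\phi(x')$ and of $\nabla\mathcal{G}(x)$ together with $\int_{0}^{T}p=0$ give $\int_{0}^{T}h_{\lambda}(t,x)\,\mathrm{d}t=0$, that is $\lambda\int_{0}^{T}h(t,x)\,\mathrm{d}t + (1-\lambda)T\,\bar{x} = 0$. Since $\|\tilde{x}\|_{\infty}\leq CK_{1}$ is already controlled, it suffices to bound the mean $\bar{x}$, which I would do by contradiction, assuming $\|\bar{x}\|$ large so that $\|x(t)\|\geq\|\bar{x}\|-CK_{1}$ is large for every $t$. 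Under $(i)$, the identity of Step~1 gives $\int_{0}^{T}\langle h(t,x),x\rangle\,\mathrm{d}t\leq C\|p\|_{L^{1}}K_{1}$ (dropping the two nonnegative terms), while $(i)$ forces this integral to be arbitrarily large, a contradiction. Under $(ii)$, applied with $\rho=\|\tilde{x}\|_{\infty}$ and the roles $\bar{x}$ in place of $x$ and $\tilde{x}(t)$ in place of $y$, one gets $\langle h(t,x(t)),\bar{x}\rangle\geq0$ for all $t$, hence $\langle\int_{0}^{T}h\,\mathrm{d}t,\bar{x}\rangle\geq0$; comparing with the displayed relation yields $-\frac{(1-\lambda)T}{\lambda}\|\bar{x}\|^{2}\geq0$, impossible for $\bar{x}\neq0$. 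Under $(iii)$ the same comparison is carried out componentwise: if $\bar{x}_{i}>d+CK_{1}$ then $x_{i}(t)>d$ for all $t$, so $h_{i}(t,x)\geq0$ and $\int_{0}^{T}h_{i}\,\mathrm{d}t\geq0$, contradicting $\int_{0}^{T}h_{i}\,\mathrm{d}t=-\frac{(1-\lambda)T}{\lambda}\bar{x}_{i}<0$ (and symmetrically for $\bar{x}_{i}<-(d+CK_{1})$). In every case $\|\bar{x}\|$, and hence $\|x\|_{\infty}<R^{*}$, is bounded by a constant independent of $\lambda$ and $x$, so $G=B(0,R^{*})$ is a bound set and, the bound being strict, $(\textsc{h}_{\mathrm{BS}})$ holds.

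Step~3 is then unchanged: from the equation $\|(\phi(x'))'\|_{L^{1}}$ is controlled by $\|\mathrm{Hess}\,\mathcal{G}(x)\|_{\infty}\|x'\|_{L^{1}}+T\|h_{\lambda}(\cdot,x)\|_{\infty}+\|p\|_{L^{1}}$, all now bounded; choosing a time where $\|x'\|$ is small (possible since $\|x'\|_{L^{1}}\leq K_{1}$) bounds $\|\phi(x')\|$ there, and Lemma~\ref{lem-2.1} with exponent $1$ bounds $\|\phi(x')\|_{\infty}$, whence $\|x'\|_{\infty}$ follows from continuity of $\phi^{-1}$. This verifies $(\textsc{h}_{\mathrm{N}})$, and since the $\lambda=0$ field is the identity with $\mathrm{d}_{\mathrm{B}}(\mathrm{Id}_{\mathbb{R}^{n}},G,0)=1$, Theorem~\ref{th-bs} delivers a solution with values in $\overline{G}$. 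I expect the three-case analysis of Step~2---and in particular recognising the integrated-equation identity as the correct substitute for the point $\tilde{t}$ exploited in Theorem~\ref{th-Lienard-0}---to be the main obstacle.
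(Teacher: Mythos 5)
Your proposal is correct and follows essentially the same route as the paper's proof: the same homotopy \eqref{eq-L-lambda} with $h_{\lambda}(t,x)=\lambda h(t,x)+(1-\lambda)x$ and $\lambda p(t)$, the same three-step scheme (an $L^{1}$-bound on $x'$ from multiplying by $x$ and exploiting $\int_{0}^{T}p\,\mathrm{d}t=0$, a bound on $\|x\|_{\infty}$ via the integrated identity $\lambda\int_{0}^{T}h(t,x(t))\,\mathrm{d}t+(1-\lambda)T\bar{x}=0$, which is exactly the paper's \eqref{eq-nu} in vector form, with the same three-case analysis for $(i)$--$(iii)$, and a Nagumo bound via Lemma~\ref{lem-2.1} with $p=1$). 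Your only deviations are cosmetic or mildly advantageous: you absorb the forcing term through the decomposition $x=\bar{x}+\tilde{x}$ with $\|\tilde{x}\|_{\infty}\leq\sqrt{n}\,\|x'\|_{L^{1}}$ where the paper instead uses a $T$-periodic primitive $P$ of $p$ and chooses $\eta>\|P\|_{\infty}$, and in Step~3 you anchor Lemma~\ref{lem-2.1} at a time where $\|x'\|\leq K_{1}/T$, which---unlike the paper's appeal to the Rolle argument of Corollary~\ref{cor-3.1}, tied to the special form \eqref{phi-A}---works for any homeomorphism satisfying $(\textsc{h}_{\phi})$; just state explicitly, as the paper does, that for $\lambda=0$ the only $T$-periodic solution is trivial, since your Step~2 divides by $\lambda$.
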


\begin{proof}
The proof is similar to the one of Theorem~\ref{th-Lienard-0} and thus we only focus on the main modifications requested. We introduce the parameter-dependent problem
\begin{equation}\label{eq-L-lambda}
\begin{cases}
\, ( \phi(x') )' = \lambda \dfrac{\mathrm{d}}{\mathrm{d}t} \nabla \mathcal{G}(x) + h_{\lambda}(t,x) + \lambda p(t),\\
\, x(0) = x(T), \quad x'(0) = x'(T),
\end{cases}
\end{equation}
where $h_{\lambda}(t,x)$ is defined as in \eqref{h-lambda}.
Since $\int_{0}^{T} p(t)\,\mathrm{d}t=0$, it is convenient to fix a $T$-periodic continuously differentiable function $P\colon\mathbb{R}\to\mathbb{R}^{N}$ such that $P'(t)=p(t)$.
We divide the proof in some steps.

\smallskip
\noindent
\textit{Step~1. A priori bound of $\|x'\|_{L^{1}}$.} Let $x(\cdot)$ be a solution of problem~\eqref{eq-L-lambda}. By integrating the scalar product between $x(\cdot)$ and the equation in \eqref{eq-L-lambda}, we have
\begin{equation}\label{eq-3.10}
\begin{aligned}
\int_{0}^{T} \langle \phi(x'(t)),x'(t) \rangle \,\mathrm{d}t
&= - \int_{0}^{T}\langle h_{\lambda}(x(t)),x(t)\rangle\,\mathrm{d}t 
- \lambda \int_{0}^{T}\langle p(t),x(t)\rangle\,\mathrm{d}t .
\\
&= - \int_{0}^{T}\langle h_{\lambda}(x(t)),x(t)\rangle\,\mathrm{d}t 
+ \lambda \int_{0}^{T}\langle P(t),x'(t)\rangle\,\mathrm{d}t.
\end{aligned}
\end{equation}
Next, from hypothesis $(\textsc{h}_{\textsc{h}}^{+})$ and the fact that $\lambda\in\mathopen{[}0,1\mathclose{]}$, we deduce that
\begin{equation*}
\int_{0}^{T} \langle \phi(x'(t)),x'(t) \rangle \,\mathrm{d}t 
\leq TK_{0} + \|P\|_{\infty} \int_{0}^{T} \|x'(t)\|\,\mathrm{d}t.
\end{equation*}
We fix $\eta>\|P\|_{\infty}$ and, by hypothesis $(\textsc{h}_{\phi})$, we obtain that
\begin{equation*}
(\eta-\|P\|_{\infty}) \int_{0}^{T} \|x'(t)\|\,\mathrm{d}t \leq T(K_{0}+M),
\end{equation*}
and so
\begin{equation*}
\|x'\|_{L^{1}} \leq \dfrac{T(K_{0}+M)}{\eta-\|P\|_{\infty}}=:K_{1},
\end{equation*}
which is the desired bound.

\smallskip
\noindent
\textit{Step~2. A priori bound of $\|x\|_{\infty}$.} 
Without loss of generality, we can consider $\lambda\in\mathopen{]}0,1\mathclose{[}$; indeed, an easy computation shows that for $\lambda=0$ the only $T$-periodic solution is the trivial one. Let $x(\cdot)$ be a solution of problem~\eqref{eq-L-lambda}.

Assume condition $(i)$. 
From \eqref{eq-3.10} we have
\begin{equation*}
\lambda \int_{0}^{T}\langle h(t,x(t)),x(t)\rangle\,\mathrm{d}t +(1-\lambda)\int_{0}^{T}\|x(t)\|\mathrm{d}t \leq \lambda \|P\|_{\infty} K_{1},
\end{equation*}
and so, dividing by $\lambda>0$,
\begin{equation*}
\int_{0}^{T}\langle h(t,x(t)),x(t)\rangle\,\mathrm{d}t \leq \|P\|_{\infty} K_{1}.
\end{equation*}
Let $\gamma > \|P\|_{\infty} K_{1}/T$. By $(i)$ there exists $R_{\gamma}>0$ such that $\langle h(t,x(t)),x(t)\rangle > \gamma$ for all $t\in\mathopen{[}0,T\mathclose{]}$ such that $\|x(t)\|\geq R_{\gamma}$. We immediately conclude that there exists $\tilde{t}\in\mathopen{[}0,T\mathclose{]}$ such that $\|x(\tilde{t})\|<R_{\gamma}$, otherwise a contradiction can be easily obtained. Then, arguing as in the proof of Theorem~\ref{th-Lienard-0}, we have that $\|x\|_{\infty}< R_{\gamma}+K_{1}T:=R^{*}$.

Assume now condition $(ii)$ or condition $(iii)$. Let us also denote by $\bar{x}$ the mean value of $x(\cdot)$.
Let $\nu\in\mathbb{S}^{n-1}$ (arbitrary). An integration of the scalar product between $\nu$ and the equation in \eqref{eq-L-lambda} gives
\begin{equation*}
\int_{0}^{T} \langle h_{\lambda}(t,x(t)),\nu\rangle\,\mathrm{d}t = \lambda \int_{0}^{T} \langle p(t) , \nu\rangle\, \mathrm{d}t
= \lambda \langle \int_{0}^{T} p(t) \mathrm{d}t, \nu\rangle = 0,
\end{equation*}
which equivalently reads as
\begin{equation}\label{eq-nu}
\lambda \dfrac{1}{T}\int_{0}^{T} \langle h(t,x(t)),\nu\rangle\,\mathrm{d}t + (1-\lambda) \langle\bar{x},\nu\rangle =0.
\end{equation}
Let $(ii)$ hold. If $\bar{x}\neq0$, we set $\nu:=\bar{x}/\|\bar{x}\|$ and so $\int_{0}^{T} \langle h(t,x(t)),\bar{x}\rangle\,\mathrm{d}t <0$.
Then, there exists $\tilde{t}_{\bar{x}}\in\mathopen{[}0,T\mathclose{]}$ such that  $\langle h(\tilde{t}_{\bar{x}},x(\tilde{t}_{\bar{x}})),\bar{x}\rangle<0$.
On the other hand, we have that $\|x(\tilde{t}_{\bar{x}})-\bar{x}\|$ is bounded. Indeed, let $\hat{t}_{i}\in\mathopen{[}0,T\mathclose{]}$ be such that $x_{i}(\hat{t}_{i})=\bar{x}_{i}$, thus
\begin{equation*}
|x_{i}(\tilde{t}_{\bar{x}})-\bar{x}_{i}| = \biggl{|} \int_{\tilde{t}_{\bar{x}}}^{\hat{t}_{i}} x_{i}'(s)\,\mathrm{d}s \biggr{|} \leq \int_{0}^{T} |x_{i}'(s)|\,\mathrm{d}s \leq \int_{0}^{T} \|x'(s)\|\,\mathrm{d}s =\|x'\|_{L^{1}} \leq K_{1},
\end{equation*}
and so $\|x(\tilde{t}_{\bar{x}})-\bar{x}\|\leq \sqrt{n} \, K_{1}$. Therefore, from $x(\tilde{t}_{\bar{x}})=\bar{x} +(x(\tilde{t}_{\bar{x}})-\bar{x})$ and $(ii)$, we have that there exists $\hat{R}>0$ such that $\|\bar{x}\|< \hat{R}$. Clearly, the same inequality holds if $\bar{x}=0$.
Next, it is easy to prove that
\begin{equation*}
|x_{i}(t)| \leq |\bar{x}_{i}| + |x_{i}(t)-\bar{x}_{i}| \leq  \|\bar{x}\| + \|x'\|_{L^{1}} < \hat{R} + K_{1} 
\end{equation*}
and thus $\|x\|_{\infty} < \sqrt{n} (\hat{R} + K_{1}) =:R^{*}$.
Let $(iii)$ hold. We prove that for every $i=1,\ldots,n$ there exists $\tilde{t}_{i}\in\mathopen{[}0,T\mathclose{]}$ such that $\|x_{i}(\tilde{t}_{i})\|<d$. Indeed, let $\nu:=e_{i}$. From \eqref{eq-nu}, we have
\begin{equation*}
\lambda \int_{0}^{T} h_{i}(t, x(t))\,\mathrm{d}t + (1-\lambda) \int_{0}^{T} x_{i}(t)\, \mathrm{d}t =0
\end{equation*}
and, proceeding by contradiction, we easily reach the claim. Then, arguing as above, we have that $\|x\|_{\infty}< \sqrt{n} (d + K_{1}) =:R^{*}$.

As a consequence, the open ball $G=B(0,R^{*})$ is (trivially) a bound set for system \eqref{eq-L-0-lambda}.

\smallskip
\noindent
\textit{Step~3. Conclusion.}
From
\begin{equation*}
\|(\phi(x'))'\|_{L^{1}}
\leq \|\mathrm{Hess}\,\mathcal{G}(x)\|_{\infty} \|x'\|_{L^{1}}+ T \|h_{\lambda}(\cdot,x)\|_{\infty}  + \|p\|_{L^{1}},
\end{equation*}
we have that $(\phi(x'(\cdot)))'$ is bounded in $L^{1}$ and we conclude as above.
\end{proof}

\begin{remark}\label{rem-3.2}
Theorem~\ref{th-Lienard} in the variant $(i)$ provides an extension of \cite[Theorem~3.3]{PeXu-07}, where the result has been proved for $\phi=\phi_{p}$ and $h(t,x)=h(x)$ a conservative vector field satisfying the more restrictive condition $\langle h(x),x \rangle \geq b \|x\|^{\alpha} - c$ for every $x\in\mathbb{R}^{n}$, with $b>0$, $c\geq 0$ and $\alpha>1$.

Conditions $(ii)$ and $(iii)$ are classical ones in this context for vector second-order systems of Li\'{e}nard-type (cf.~\cite{GHMaWa-09,Ma-72,OmZa-84,Za-83}). Observe that both conditions are satisfied in the one-dimensional case if we assume the sign-condition $h(t,x) x \geq 0$ for every $t$ and $x$ with $|x|>d>0$. Thus, Theorem~\ref{th-Lienard} extends to a large class of $\phi$-Laplacian differential systems the classical theorem of Reissig mentioned above.
\hfill$\lhd$
\end{remark}

\begin{remark}\label{rem-3.3}
Some comments on the hypotheses of Theorem~\ref{th-Lienard} are in order. First of all, we notice that, in dimension $n=1$, condition $(i)$ implies the validity of $(ii)$ and $(iii)$, which are equivalent each other. On the other hand, the function
\begin{equation}\label{def-q}
q(x) = x e^{-|x|}, \quad x\in\mathbb{R},
\end{equation}
satisfies $(ii)$ and $(iii)$, but not $(i)$. In this case, $q(x) x \geq 0$ for every $x\in\mathbb{R}$ and thus $(\textsc{h}_{\textsc{h}}^{+})$ follows.

We focus on the case $n\geq 2$. In order to show that $(i)$, $(ii)$, $(iii)$ are independent, we are going to present three examples for which exactly one of the hypotheses is valid. Every example is given for $n=2$, however it can be easily generalized to treat the $n$-dimensional case for a general $n\geq 2$. Moreover, in each example condition $(\textsc{h}_{\textsc{h}}^{+})$ can be straightforwardly checked.

\smallskip
\noindent
\textit{Example~1.} Let $\varepsilon \in\mathopen{]}0,\pi^{-1}\mathclose{[}$. Let us consider the function
\begin{equation*}
h(x_{1},x_{2}) = \bigl{(}x_{1} - 2 x_{2} + \varepsilon \arctan x_{1}, x_{2} + \varepsilon \arctan x_{2} \bigr{)}, \quad x=(x_{1},x_{2})\in\mathbb{R}^{2}.
\end{equation*}
We observe that
\begin{equation*}
\langle h(x),x \rangle = (x_{1}- x_{2})^{2} + x_{1} \varepsilon \arctan x_{1}  + x_{2} \varepsilon \arctan x_{2} \geq  \varepsilon (x_{1}\arctan x_{1}  + x_{2} 
\arctan x_{2}),
\end{equation*}
which tends to $+\infty$ as $\|x\|\to+\infty$. Therefore, $(i)$ holds.
Moreover, we have
\begin{equation*}
\langle h(x+y),x \rangle = (x_{1}- x_{2})^{2} + x_{1} \varepsilon \arctan(x_{1}+y_{1})  + x_{2} \varepsilon \arctan(x_{2}+y_{2}) + x_{1} y_{1} + y_{2}(x_{2}- 2 x_{1}).
\end{equation*}
Let $x=(s,s)$, $s>0$, and $y=(-1,0)$. Therefore, 
\begin{equation*}
\langle h(x+y),x \rangle = \varepsilon s ( \arctan(s-1) + \arctan s) - s \leq s(\varepsilon\pi-1) < 0.
\end{equation*}
Hence, $(ii)$ does not hold.
Let again $x=(s,s)$, $s>0$, then $h_{1}(x) x_{1}= - s^{2} + \varepsilon s \arctan s < 0$ for $s$ sufficiently large. Therefore, $(iii)$ does not hold.

\smallskip
\noindent
\textit{Example~2.} Let us consider the function
\begin{equation*}
h(x_{1},x_{2}) = \bigl{(}(x_{1}-x_{2}) q(\|x\|), x_{2} q(\|x\|) \bigr{)}, \quad x=(x_{1},x_{2})\in\mathbb{R}^{2},
\end{equation*}
where $q$ is defined as in \eqref{def-q}. We observe that
\begin{equation*}
\langle h(x),x \rangle =  ( x_{1}^{2} - x_{1}x_{2} + x_{2}^{2} ) q(\|x\|) \leq \dfrac{3}{2} \|x\|^{2} q(\|x\|),
\end{equation*}
which tends to $0$ as $\|x\|\to+\infty$. Therefore, $(i)$ does not hold.
Moreover,
\begin{align*}
\langle h(x+y),x \rangle 
&=  ( x_{1}^{2} - x_{1}x_{2} + x_{2}^{2} + x_{1} y_{1} - x_{1} y_{2} + x_{2} y_{2}) q(\|x+y\|)  
\\
&\geq \biggl{(} \dfrac{1}{2} \|x\|^{2} - 3 \|x\|\|y\| \biggr{)} q(\|x+y\|) \geq 0
\end{align*}
whenever $\|x\|\geq 6 \|y\|$ and $\|y\|$ is bounded. Therefore, $(ii)$ holds.

For $x=(s,2s)$, $s>0$, we have $h_{1}(x)x_{1} = -s^{2} q(|s|) < 0$. Hence, $(iii)$ does not hold.

\smallskip
\noindent
\textit{Example~3.} Let us consider the function
\begin{equation*}
h(x_{1},x_{2}) = \bigl{(} (x_{1}^{3}-3x_{1}) q(|x_{1}|), (x_{2}^{3}-3x_{2}) q(|x_{2}|) \bigr{)}, \quad x=(x_{1},x_{2})\in\mathbb{R}^{2}.
\end{equation*}
Let $x=(s,1)$, $s>0$. The quantity
\begin{equation*}
\langle h(x),x \rangle = x_{1}^{2}(x_{1}^{2}-3) q(|x_{1}|) + x_{2}^{2}(x_{2}^{2}-3) q(|x_{2}|) = s^{2}(s^{2}-3)q(|s|) - 2 q(1)
\end{equation*}
tends to $- 2 e^{-1} <0$ as $s\to+\infty$. Therefore, $(i)$ and $(ii)$ (with $y=0$) do not hold. For every $i\in\{1,2\}$, we notice that
\begin{equation*}
h_{i}(x) x_{i} = x_{i}^{2}(x_{i}^{2}-3) q(|x_{i}|) \geq 0,
\end{equation*}
for every $x\in\mathbb{R}^{2}$ with $|x_{i}|>\sqrt{3}$. Therefore, $(iii)$ holds.
\hfill$\lhd$
\end{remark}

\begin{remark}\label{rem-3.4}
We observe that one could add to the list of hypotheses $(i)$--$(iii)$ in Theorem~\ref{th-Lienard} another classical condition, that is the \textit{generalized Villari condition} (cf.~\cite[p.~381]{MaMa-98}), which, in our context, reads as follows:
\begin{itemize}[leftmargin=28pt,labelsep=10pt]
\item there exists $d>0$ such that $\int_{0}^{T}h_{i}(t,x(t)) \,\mathrm{d}t \neq 0$ for some $i=1,\ldots,n$, for every $x=(x_{1},\ldots,x_{n})\in\mathbb{R}$ with $|x_{j}|> d$ for some $j=1,\ldots,n$.
\end{itemize}
It is worth noticing that condition~$(iii)$ with the strict inequalities is a special case of the generalized Villari condition. On the other hand, in Example~2 and Example~3 of Remark~\ref{rem-3.3}, if we consider the maps $h(t,x)$ multiplied by a scalar function $\rho(\|x\|)$ which vanishes outside a large open ball, the examples continue to satisfy $(ii)$ and, respectively, $(iii)$, but the generalized Villari condition does not hold.
\hfill$\lhd$
\end{remark}

\appendix

\section{Remarks on the convexity of a sublevel set}\label{appendix-A}

In this appendix, we propose an alternative proof of Lemma~\ref{lem-convex} based on a classical result about convex sets, namely the Tietze--Nakajima theorem \cite{Na-28,Ti-28}, that we recall for reader's convenience (see also \cite{Kl-51,SaStVa-61} for more general versions of the result).
A set $D\subseteq\mathbb{R}^{n}$ is \textit{locally convex} if each point of $D$ has a neighborhood whose intersection with $D$ is convex (cf.~\cite[Section~17]{La-82}).
Then, the following result holds (cf.~\cite{La-82,Va-64} for the proof).

\begin{theorem}[Tietze--Nakajima]
A closed connected locally convex set in a Euclidean space is convex.
\end{theorem}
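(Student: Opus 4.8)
The plan is to prove the theorem by a \emph{shortest-path} (geodesic) argument: I will show that any two points $p,q\in D$ are joined by a length-minimising path lying in $D$, and that such a minimiser is forced to be the straight segment $[p,q]$. Since convexity of $D$ means exactly that $[p,q]\subseteq D$ for all $p,q\in D$, this suffices. First I would record that local convexity makes $D$ locally path-connected, because each point has a convex, hence path-connected, neighbourhood in $D$. A connected, locally path-connected set is path-connected, so $p$ and $q$ are joined by a continuous path in $D$. Covering the compact image of this path by finitely many balls on which $D$ is convex and using a Lebesgue-number argument, I would then replace it by a \emph{polygonal} path in $D$, which has finite length $L_{0}$. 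Thus the class $\mathcal{P}$ of rectifiable paths in $D$ from $p$ to $q$ of length at most $L_{0}$ is nonempty.

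Next I would produce a length-minimising element of $\mathcal{P}$. Reparametrising each path by constant speed on $\mathopen{[}0,1\mathclose{]}$ makes it $L_{0}$-Lipschitz; since every such path stays in the compact ball $B[p,L_{0}]$, the family is equibounded and equicontinuous, so the Arzel\`{a}--Ascoli theorem yields a uniformly convergent minimising sequence. Because $D$ is \emph{closed}, the limit path still lies in $D$, and because arc length is lower semicontinuous under uniform convergence, the limit attains the infimum of the length over $\mathcal{P}$. Hence there exists a shortest path $\Gamma$ in $D$ joining $p$ to $q$.

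Then I would straighten the minimiser. If $\Gamma$ is not the segment $[p,q]$, then it fails to be locally straight at some interior point $z\in D$; local convexity provides $r>0$ such that $D\cap B(z,r)$ is convex. Choosing parameters on either side of $z$ whose images lie in $B(z,r)$, the corresponding subarc of $\Gamma$ is strictly longer than its chord, since the subarc is not a straight segment, while the chord lies in $D$ by convexity of $D\cap B(z,r)$. Replacing the subarc by this chord produces a path in $D$ from $p$ to $q$ of strictly smaller length, contradicting minimality. As a curve that is locally straight at every interior point is a single segment, we conclude $\Gamma=[p,q]$, so $[p,q]\subseteq D$; since $p,q$ were arbitrary, $D$ is convex.

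The main obstacle is the existence of the minimiser: one must make precise the compactness step (Arzel\`{a}--Ascoli together with the lower semicontinuity of length and the closedness of $D$) and, in the straightening step, guarantee a \emph{strictly} positive length gain at a genuine bend rather than at a merely reparametrised straight arc. Everything else is routine. I note that the argument never uses boundedness of $D$, so it covers the theorem as stated in a general Euclidean space.
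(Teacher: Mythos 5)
Your argument is correct in strategy and in all essential details, but note first that the paper does not actually prove the Tietze--Nakajima theorem: it states it in Appendix~\ref{appendix-A} as a classical result, refers to \cite{La-82,Va-64} for the proof, and uses it only as a black box in the proof of Lemma~\ref{lem-convex}. So there is no internal proof to compare against; measured against the cited literature, your route --- path-connectedness from local convexity, polygonal approximation via a Lebesgue-number argument, existence of a length-minimising path by constant-speed reparametrisation, Arzel\`{a}--Ascoli and lower semicontinuity of length (with the closedness of $D$ entering exactly where it must, to keep the limit path in $D$), and finally chord-replacement inside a convex piece $D\cap B(z,r)$ --- is the known geodesic proof of Tietze--Nakajima, and it is genuinely different from the support-point/local-nonconvexity style arguments in the classical sources (cf.\ also \cite{Kl-51,SaStVa-61}). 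The two delicate points are the ones you yourself flag: ``locally straight'' must be phrased in the parameter rather than in the image (harmless, since a minimiser is injective after excising loops), and equality in the inequality between arc length and chord length must be seen to force a monotone traversal of the segment, so that a genuine bend yields a \emph{strictly} positive gain; also, in the path-connectedness step, one path-connected neighbourhood per point already suffices for the open-and-closed argument, so local convexity is enough. Your closing observation is also accurate: the geodesic argument never uses boundedness, so it proves the theorem as stated, while the paper only needs the bounded case $D=[V\leq c]$ in Lemma~\ref{lem-convex}. In short: a valid, self-contained proof by a different (metric) route, whose payoff is elementariness and applicability to unbounded sets, at the cost of the compactness bookkeeping you correctly identify as the main obstacle.
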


We now give the proof of Lemma~\ref{lem-convex}.

\begin{proof}[Proof of Lemma~\ref{lem-convex}]
Let $V \colon \mathbb{R}^{n}\to \mathbb{R}$ be a $\mathcal{C}^{2}$-function satisfying \eqref{cond-V'} for some $c\in\mathbb{R}$ with $D=[V\leq c]$ nonempty, bounded and connected.
Let us assume that
\begin{itemize}[leftmargin=28pt,labelsep=10pt]
\item[$(C)$] for all $u\in\partial D$, $\langle V''(u)y,y\rangle \geq 0$,
for all $y\in \mathbb{R}^{n}$ with $\langle V'(u),y\rangle = 0$.
\end{itemize}
holds and we will prove that $D$ is convex.
By Tietze--Nakajima theorem it is sufficient to verify that $D$ is locally convex. Accordingly, let $u_{0}$ be an arbitrary but fixed point in $\partial D$ and we aim to prove that there exists a neighborhood $U$ of $u_{0}$ such that $D\cap U$ is convex. 
Recalling that $V'(u_{0})\neq 0$ by assumption \eqref{cond-V'}, we define the vector $w:=V'(u_{0})/\|V'(u_{0})\|$. Let $W:=\{z\in\mathbb{R}^{n} \colon \langle z,w\rangle=0\}$, that is the subspace (of dimension $n-1$) of $\mathbb{R}^{n}$ orthogonal to the vector $w$.
Since every $x\in\mathbb{R}^{n}$ can be uniquely written as $x=u_{0}+\alpha w + z$ for $(\alpha,z)\in\mathbb{R}\times W$, we can equivalently consider the function
\begin{equation*}
\mathcal{V}(\alpha,z):=V(u_{0}+\alpha w + z).
\end{equation*}
Thus, our goal reduces to prove that there exists a neighborhood $\mathcal{U}$ of $(0,0)$ in the $(\alpha,z)$-space $\mathbb{R}\times W$ 
such that $\mathcal{D}\cap \mathcal{U}$ is convex, where
\begin{equation*}
\mathcal{D}:=\bigl{\{} (\alpha,z)\in\mathbb{R}\times W\colon V(u_{0}+\alpha w+z)\leq c \bigr{\}}.
\end{equation*}
We observe that $\partial_{\alpha} \mathcal{V}(0,0)= \langle V'(u_{0}),w\rangle = \|V'(u_{0})\| >0$ and thus, by the implicit function theorem, there exist a neighborhood of $(0,0)$ of the form $\mathopen{]}-\varepsilon,\varepsilon\mathclose{[}\times B(0,\varepsilon)$ and a continuously differentiable map $\varphi\colon B(0,\varepsilon)\to\mathopen{]}-\varepsilon,\varepsilon\mathclose{[}$ such that
\begin{equation}\label{eq-IFT}
V(u_{0}+\alpha w + z) = c \quad \text{if and only if} \quad \alpha=\varphi(z),
\end{equation}
for every $(\alpha,z)\in \mathopen{]}-\varepsilon,\varepsilon\mathclose{[}\times B(0,\varepsilon)$. Here, $B(0,\varepsilon)$ is an open neighborhood of the origin in $W \cong \mathbb{R}^{n-1}$. Without loss of generality, we assume also that $\langle V'(\xi),w \rangle >0$ for all $\xi\in\mathopen{]}-\varepsilon,\varepsilon\mathclose{[}\times B(0,\varepsilon)$.
Hence, it will be sufficient to verify that the set
\begin{equation*}
\mathcal{D}\cap \mathcal{U} =\bigl{\{} (\alpha,z)\in \mathopen{]}-\varepsilon,\varepsilon\mathclose{[}\times B(0,\varepsilon) \colon \alpha \leq \varphi(z) \bigr{\}}
\end{equation*}
is convex. Let $(\alpha_{1},z_{1}),(\alpha_{2},z_{2})\in \mathcal{D}\cap \mathcal{U}$ and set $v:=(z_{2}-z_{1}) /\|z_{2}-z_{1}\|$ if $z_{1}\neq z_{2}$ and $v$ arbitrary with $\|v\|=1$ otherwise.
Then, to check the (local) convexity we can restrict ourselves to the intersection of $\mathcal{D}\cap \mathcal{U}$ with the $2$-dimensional subspace $\Sigma$ of $\mathbb{R}^{n}$ spanned by $v$ and $w$.
We set
\begin{equation*}
\ell(\beta) := V(u_{0}+\varphi(\beta v) w + \beta v) = c, \quad \vartheta(\beta):=\varphi(\beta v),
\quad \text{for every $\beta\in \mathopen{]}-\varepsilon,\varepsilon\mathclose{[}$.}
\end{equation*}
Then, for every $\beta\in \mathopen{]}-\varepsilon,\varepsilon\mathclose{[}$, we have
\begin{equation*}
\ell'(\beta) = \langle V'(u_{0}+\vartheta(\beta) w + \beta v),\vartheta'(\beta) w + v\rangle = \langle V'(u),y \rangle = 0,
\end{equation*}
where we have set
\begin{equation*}
u:=u_{0}+\vartheta(\beta) w + \beta v, \quad y:= \vartheta'(\beta) w + v.
\end{equation*}
Moreover, for every $\beta\in \mathopen{]}-\varepsilon,\varepsilon\mathclose{[}$, it holds that
\begin{align*}
\ell''(\beta) &=\langle V''(u_{0}+\vartheta(\beta) w + \beta v) (\vartheta'(\beta) w + v) , \vartheta'(\beta) w + v\rangle
\\
&\quad + \langle V'(u_{0}+\vartheta(\beta) w + \beta v), \vartheta''(\beta)w \rangle
\\
&= \langle V''(u)y,y \rangle + \vartheta''(\beta) \langle V'(u), w \rangle = 0.
\end{align*}
By hypothesis $(C)$ we conclude that $\vartheta''(\beta) \leq 0$ for all $\beta\in \mathopen{]}-\varepsilon,\varepsilon\mathclose{[}$, which implies that the subgraph of $\varphi$ restricted to $\mathcal{D}\cap\mathcal{U}\cap \Sigma$ is convex.

Conversely, assume that $D$ is convex. If condition $(C)$ is not satisfied there exists $u_{0}\in\partial D$ such that
$\langle V''(u_{0})v_{1},v_{1} \rangle <0$ for some vector $v_{1}\in\mathbb{R}^{n}$ with $\|v_{1}\|=1$ and $\langle V'(u_{0}),v_{1} \rangle=0$.
Arguing as in the previous part of the proof, we introduce the vector $w:=V'(u_{0})/\|V'(u_{0})\|$ and, using the implicit function theorem we find a neighborhood of $(0,0)$ of the form $\mathopen{]}-\varepsilon,\varepsilon\mathclose{[}\times B(0,\varepsilon)$ and a continuously differentiable map $\varphi\colon B(0,\varepsilon)\to\mathopen{]}-\varepsilon,\varepsilon\mathclose{[}$ such that \eqref{eq-IFT} holds for every $(\alpha,z)\in \mathopen{]}-\varepsilon,\varepsilon\mathclose{[}\times B(0,\varepsilon)$. As a next step, we consider the intersection of $\mathcal{D}\cap \mathcal{U}$ with the $2$-dimensional subspace $\Sigma$ of $\mathbb{R}^{n}$ spanned by $v_{1}$ and $w$.
By the convexity of $D$ it follows that the set
\begin{equation*}
\mathcal{A} := \bigl{\{}(\alpha,\beta)\in \mathopen{]}-\varepsilon,\varepsilon\mathclose{[}\times \mathopen{]}-\varepsilon,\varepsilon\mathclose{[} \colon \alpha \leq \varphi(\beta v_{1})\bigr{\}}
\end{equation*}
is convex. On the other hand, setting as above $\ell(\beta) := V(u_{0}+\varphi(\beta v_{1}) w + \beta v_{1})$ and $\vartheta(\beta):=\varphi(\beta v_{1})$, we obtain
\begin{align*}
0 &=\ell'(\beta) = \langle V'(u_{0}+\vartheta(\beta) w + \beta v_{1}), \vartheta'(\beta) w + v_{1}\rangle
\\
0 &=\ell''(\beta) =\langle V''(u_{0}+\vartheta(\beta) w + \beta v_{1}) (\vartheta'(\beta) w + v_{1}) ,  \vartheta'(\beta) w + v_{1}\rangle
\\
&\qquad \qquad \quad+ \langle V'(u_{0}+\vartheta(\beta) w + \beta v_{1}), \vartheta''(\beta)w \rangle.
\end{align*}
Hence, for $\beta=0$, we have that $\vartheta'(0)\|V'(u_{0})\|= - \langle V'(u_{0}),v_{1}\rangle =0$,
which implies $\vartheta'(0)=0$, and that
$\vartheta''(0) \|V'(u_{0})\| = - \langle V''(u_{0}) v_{1} ,v_{1} \rangle > 0$, which implies $\vartheta''(0)>0$.
Therefore, the map $\beta\mapsto\varphi(\beta v_{1})$ is strictly convex in a neighborhood $\mathopen{]}-\delta,\delta\mathclose{[}$ of the origin for a suitable $\delta\in\mathopen{]}0,\varepsilon\mathclose{]}$. This clearly contradicts the convexity of the set $\mathcal{A}$. The proof is completed.
\end{proof}

\bibliographystyle{elsart-num-sort}
\bibliography{FeZa-biblio}

\end{document}